\newtheorem{thm}{Theorem}[section]
\newtheorem{lem}[thm]{Lemma}
\theoremstyle{definition}
\newtheorem{defn}[thm]{Definition}
\newtheorem{Remark}[thm]{Remark}}
\numberwithin{equation}{section}
\DeclareMathOperator{\Res}{Res}
\newcommand{\inv}{^{-1}}
\newcommand{\abs}[1]{\lvert#1\rvert}
\newcommand{\fermion}[3]{{\prescript{}{#1}\psi_{(#2)}^{#3}}}
\begin{document}

\allowdisplaybreaks

\newcommand{\arXivNumber}{1605.00192}

\renewcommand{\PaperNumber}{023}

\FirstPageHeading

\ShortArticleName{$\tau$-Functions, Birkhoff Factorizations and Difference Equations}

\ArticleName{$\boldsymbol{\tau}$-Functions, Birkhoff Factorizations\\ and Difference Equations}

\Author{Darlayne ADDABBO~$^\dag$ and Maarten BERGVELT~$^\ddag$}

\AuthorNameForHeading{D.~Addabbo and M.~Bergvelt}

\Address{$^\dag$~Department of Mathematics, University of Notre Dame, Notre Dame, IN 46556, USA}
\EmailD{\href{mailto:daddabbo@nd.edu }{daddabbo@nd.edu}}

\Address{$^\ddag$~Department of Mathematics, University of Illinois, Urbana-Champaign, IL 61801, USA}
\EmailD{\href{mailto: bergv@illinois.edu }{bergv@illinois.edu}}

\ArticleDates{Received July 24, 2018, in final form March 05, 2019; Published online March 27, 2019}

\Abstract{$Q$-systems and $T$-systems are systems of integrable difference equations that have recently attracted much attention, and have wide applications in representation theory and statistical mechanics. We show that certain $\tau$-functions, given as matrix elements of the action of the loop group of ${\rm GL}_{2}$ on two-component fermionic Fock space, give solutions of a~$Q$-system. An obvious generalization using the loop group of ${\rm GL}_3$ acting on three-component fermionic Fock space leads to a new system of 4 difference equations.}

\Keywords{integrable systems; $\tau$-functions; $Q$- and $T$-systems; Birkhoff factorizations}

\Classification{17B80}

\section{Introduction}\label{sec:introduction}

Many integrable differential equations can be transformed {to} simpler, bilinear form by introducing new dependent variables called $\tau$-functions. In practice, these $\tau$-functions are given as matrix elements of infinite-dimensional groups or Lie algebras, etc.

For instance, the famous Korteweg--de Vries (KdV) equation
\begin{gather}
u_t+u_{xxx}+6 uu_x=0\label{eq:85}
\end{gather}
is transformed by the substitution
\begin{gather*}
u=2\ln(\tau)_{xx}
\end{gather*}
into Hirota bilinear form
\begin{gather*}
\big(D_{x}D_{t}+D_{x}^{4}\big)\tau\cdot\tau=0,
\end{gather*}
where $D_{u}$ is the Hirota operator so that $D_{u}\sigma\cdot\tau=\sigma_u\tau-\sigma\tau_u$. See~\cite{MR2085332} for details and many more examples. In the case of the KdV equation, the $\tau$-function is a matrix element for the action of the loop group of ${\rm GL}_{2}$ on one-component fermionic Fock space, see for instance~\cite{MR86a:58093, MR1021978,MR1736222}.

To produce the integrable equations from $\tau$-functions, one introduces an intermediate object, the Baker function. It satisfies linear equations, and the compatibility of these equations gives the integrable hierarchy.

For instance, for the KdV case, the $\tau$-function is a scalar function $\tau(t_{1},t_{3},t_{5},\dots)$ of odd times ($t_{1}=x,t_{3}=t$), and the Baker function is also a scalar function $\Psi$ of the times $t_{2k+1}$ and of an extra variable~$z$, the spectral parameter. It is defined by
\begin{gather*}
\Psi(z;t_{1},t_{3},\dots)=(\Gamma(z,t)\circ\tau(t))/\tau(t),
\end{gather*}
where $\Gamma(z,t)={\rm e}^{\sum z^{k}t_{k}}{\rm e}^{-\sum \frac1{kz^{k}}\partial_{t_{k}}}$; this is essentially the vertex operator for the free fermion vertex algebra, see, e.g.,~\cite{MR1651389}. The Baker function satisfies linear equations
\begin{gather}
\partial_{t_{k}}\Psi(z;t)=B_{k}(\partial_{x})\Psi(z;t),\label{eq:84}
\end{gather}
where $B_{k}(\partial_{x})$ is a degree $k$ differential operator in $\partial_{x}$. The compatibility of \eqref{eq:84} for $k=1$ and $k=3$ turns out to give precisely the Korteweg--de Vries equation~\eqref{eq:85}.

In this paper we are interested in integrable \emph{difference} (as opposed to differential) equations. Still, we follow very much the setup sketched above for the KdV hierarchy.

In the first part of this paper, we introduce a collection of $\tau$-functions as matrix elements of the action of loop group elements for ${\rm GL}_{2}$, depending on discrete variables\footnote{These variables can be thought of as coordinates on the lower triangular subgroup of the loop group of ${\rm GL}_{2}$.} $c_{i}$, which play a~similar role as the higher KdV times $t_{2k+1}$, $k>1$. These $\tau$-functions are of the form $\tau_{k}^{(\alpha)}(c_{i})$, where $k$, $\alpha$ are discrete variables. In fact, these $\tau$-functions turn out to be (see Theorem~\ref{Thm:tauHankel}) Hankel determinants, well known since the 19th century in the theory of orthogonal polynomials, see, e.g.,~\cite{MR2191786}.

We then define Baker functions. In this case, they are $2\times 2$ matrices depending on a spectral parameter $z$, on the discrete variables $k$, $\alpha$ (and on the~$c_{i}$):
\begin{gather*}
\Psi^{[k](\alpha)}(z)=\frac{(-1)^k}{\tau_{k}^{(\alpha)}}
\begin{bmatrix}
 z^{k}&0\\0&z^{-k}
\end{bmatrix}
\begin{bmatrix}
 S^{+}(z)&0\\0&S^{-}(z)
\end{bmatrix}
\begin{bmatrix}
 \tau_{k}^{(\alpha)}& \tau_{k-1}^{(\alpha)}/z\\ \tau_{k+1}^{(\alpha)}/z& \tau_{k}^{(\alpha)}
\end{bmatrix},
\end{gather*}
where $S^{\pm}(z)=(1-S/z)^{\pm1}$ are the shift fields, constructed from the elementary shift $S\colon \mathbb{C}[c_{k}]\to\mathbb{C}[c_{k}]$, defined as the multiplicative map such that $S(1)=0$, $S(c_{k})=c_{k+1}$. The shift fields $S^{\pm}(z)$ play a similar role here that the vertex operator $\Gamma(z)$ does in the theory of the KdV hierarchy.

Next, we introduce linear equations for the $2\times 2$ Baker functions:
\begin{gather*}
\Psi^{[k](\alpha+1)}=\Psi^{[k](\alpha)}V^{(\alpha)}_{k},\qquad \Psi^{[k-1](\alpha+1)}=\Psi^{[k](\alpha)}W^{(\alpha)}_{k}.
\end{gather*}
Here, the \emph{connection matrices} $V^{(\alpha)}_{k}$, $W^{(\alpha)}_{k}$ are $2\times 2$ matrices depending on the spectral parameter. These connection matrices can be explicitly expressed in terms of the $\tau$-functions.

We then show that compatibility of these equations leads to the discrete zero-curvature equations
\begin{gather*}
V_{k}^{(\alpha)}\big(W_{k+1}^{(\alpha)}\big)^{-1}=\big(W_{k+1}^{(\alpha-1)}\big)^{-1}V_{k+1}^{(\alpha-1)}.
\end{gather*}
Since we can give explicit expressions for the connection matrices $V_{k}^{(\alpha)}$, $W_{k}^{(\alpha)}$ in terms of the $\tau$-functions, we obtain the following basic system:
\begin{gather*}
\big(\tau_k^{(\alpha)}\big)^2=\tau_{k}^{(\alpha-1)}\tau_{k}^{(\alpha+1)}-\tau_{k+1}^{(\alpha-1)}\tau_{k-1}^{(\alpha+1)}, \qquad \alpha\in \mathbb{Z}, \qquad k=0,1,\dots.
\end{gather*}
After applying a change of variables, one can see that this is precisely the $A_{\infty/2}$ $Q$-system, see, e.g.,~\cite{MR2566162}. We refer to this system as the $2Q$-system, as it is obtained from the representation theory of the central extension of the loop group of~${\rm GL}_{2}$.

In the second part of the paper we generalize our derivation of the $2Q$-system by using the loop group of ${\rm GL}_{3}$, obtaining $\tau$-functions $\tau_{k,\ell}^{(\alpha,\beta)}(c_{i},d_{i},e_{i})$, where $k,\ell,\alpha,\beta\in\mathbb{Z}$ and the $c_{i}$, $d_{i}$, $e_{i}$ are coordinates on the lower triangular subgroup of the loop group of~${\rm GL}_{3}$. We can explicitly calculate these $\tau$-functions, see Theorem~\ref{thm:n=3taufunctions}, but their formula is much more complicated than the simple Hankel determinants in the $2\times 2$ case. Next we introduce Baker functions, now $3\times 3$ matrices depending on a spectral parameter, and the linear equations for the Baker functions. Again, we can explicitly calculate the connection matrices in terms of $\tau$-functions, see Lemma~\ref{lem:9}. Compatibility of the equations satisfied by the connection matrices in this case gives us a system of \emph{four} equations (Theorem~\ref{thm:3by3Qsystem}), which we will refer to as the $3Q$-system. For all $\alpha,\beta\in {\mathbb Z}$ and $k,\ell \geq 0$
\begin{gather}
 \tau_{k + 1, \ell}^{(\alpha,\beta)}\tau_{k - 1, \ell - 1}^{(\alpha + 1,\beta)}+ \tau_{k, \ell - 1}^{ (\alpha + 1,\beta)} \tau_{k, \ell}^{ (\alpha,\beta)}=\tau_{k, \ell - 1}^{( \alpha,\beta)}\tau_{k, \ell}^{( \alpha + 1,\beta)},\nonumber\\
 \tau_{k + 1, \ell +1}^{(\alpha,\beta)}\tau_{k, \ell}^{(\alpha,\beta + 1)} + \tau_{k +1, \ell}^{(\alpha,\beta + 1)}\tau_{k, \ell + 1}^{(\alpha,\beta)}= \tau_{k + 1, \ell + 1}^{(\alpha,\beta + 1)} \tau_{k,\ell}^{(\alpha,\beta)},\nonumber\\
\big(\tau_{k,\ell}^{(\alpha,\beta)}\big)^{2}= \tau_{k, \ell}^{(\alpha + 1,\beta)} \tau_{k, \ell}^{(\alpha - 1,\beta)}+ \tau_{k+1, \ell+1}^{ (\alpha-1,\beta)} \tau_{k - 1, \ell-1}^{(\alpha + 1,\beta)}-\tau_{k+1,\ell}^{(\alpha-1,\beta)}\tau_{k - 1, \ell}^{(\alpha + 1,\beta)} ,\nonumber\\
\big(\tau_{k, \ell}^{ (\alpha, \beta)}\big)^{2}=\tau_{k, \ell}^{ (\alpha, \beta + 1)} \tau_{k, \ell}^{ (\alpha, \beta -1)}-\tau_{k, \ell + 1}^{(\alpha, \beta - 1)}\tau_{k, \ell - 1}^{ (\alpha, \beta +1)}
-\tau_{k-1, \ell}^{(\alpha,\beta-1)}\tau_{k + 1, \ell}^{(\alpha, \beta + 1)}.\label{eq:48}
\end{gather}
The first two of these new equations are generalizations of $T$-system equations. More precisely, for fixed $\beta$, after a change of variables, the first equation is a $T$-system equation. Similarly, in the second equation, after applying a change of variables, we obtain a $T$-system equation for fixed~$\alpha$. It is known that $Q$- and $T$-systems are related to many areas in mathematics and physics. See for instance \cite{MR2577308,MR2566162} for relations to cluster algebras, and~\cite{MR2773889} for applications in integrable systems.

In particular it is known, see \cite{MR2254805}, that some particular solutions of $T$-systems are $q$-characters of Kirillov--Reshetikhin modules~\cite{MR906858, MR1255302}. It is therefore natural to ask if a similar representation theoretic meaning of particular solutions of the new $3Q$-system, \eqref{eq:48}, exists.

In another direction, the $\tau$-functions for the $Q$-system are, as we mentioned above, determinants of Hankel matrices, which appear in the theory of orthogonal polynomials~\cite{MR2191786}, and in the Toda lattice~\cite{MR1424652}. Again one can wonder what the meaning of the $3Q$-system is from the point of view of orthogonal polynomials and Toda lattices. We will see in Section~\ref{sec:wideh-tau-funct} that the $\tau$-functions of the $3Q$-system depend on the choice of a~lower triangular matrix
\begin{gather*}
 \begin{bmatrix}
 1&0&0\\ C(z)&1&0\\ D(z)&E(z)&1
 \end{bmatrix}.
\end{gather*}
Here $C(z)$, $D(z)$, $E(z)$ are series in $z$. When we look at the special case where $E(z)=0,$ we obtain $\tau$-functions that are determinants of block Hankel matrices, related to bi-orthogonal polynomials, and $4$-band
Toda lattices (see, e.g.,~\cite{MR2754822}). See~\cite{MR3709701} for a~preliminary report on this.

We hope that it is clear from this paper that the theory of $Q$-systems and $T$-systems, with their many applications, is just the tip of an iceberg. For any $n>2$, there are $nQ$-systems and $nT$-systems, which are generalizations of the $2Q$- and $2T$-systems. In this paper, we discuss the construction of the $nQ$-systems for $n=2,3$. See~\cite{doi:10.1142/S0129167X18500908} for more general hierarchies.

\section[The $2\times 2$ case]{The $\boldsymbol{2\times 2}$ case}\label{sec:2x2-case}

\subsection[$2\times 2$ $\tau$-functions and $Q$-system]{$\boldsymbol{2\times 2}$ $\boldsymbol{\tau}$-functions and $\boldsymbol{Q}$-system}
\label{sec:wideh-tau-funct-1}

We have an action of the central extension, $\widehat{\rm GL}_2$, of the loop group $\widetilde{\rm GL}_2 ={\rm GL}_{2} (\mathbb{C}((z)) )$ on two-component fermionic Fock space, $F^{(2)}$, the semi-infinite wedge spaced based in $\mathbb{C}^2\otimes \mathbb{C}\big[z,z^{-1}\big]$, see, e.g., \cite{tKvdL:BosFer} for $n$-component fermions, and~\cite{KaPe:LectinfwedgMKP} for the construction of central extensions of Lie algebras and corresponding groups. Some of this material is reviewed in Appendix~\ref{sec:ferm-semi-infin}. Let $\pi\colon \widehat{\rm GL}_2 \to \widetilde{\rm GL}_2 $ be the projection onto the non-centrally extended loop group. {We will consider the action of a~group element, $g_{C}\in \widehat{\rm GL}_2$, where
\begin{gather}\label{eq:7}
 \pi(g_{C})= \begin{bmatrix}
 1 & 0\\
 C(z) & 1
 \end{bmatrix},
\end{gather} on the vacuum vector $v_{0}$} of $F^{(2)}$:
\begin{gather*} v_{0}=
\begin{bmatrix}
 1\\0\end{bmatrix}\wedge
\begin{bmatrix}
 0\\1\end{bmatrix}
\wedge
\begin{bmatrix}
 z\\0\end{bmatrix}\wedge
\begin{bmatrix}
 0\\z
\end{bmatrix}\wedge
\begin{bmatrix}
 z^{2}\\0
\end{bmatrix}\wedge
\begin{bmatrix}
 0\\z^{2}\\
\end{bmatrix}
\wedge\cdots,\end{gather*}
see~\eqref{eq:4}.
Here
\begin{gather*}
C(z)= \sum_{i\in \mathbb{Z}} c_{i}z^{-i-1},
\end{gather*}
where $c_{i}\in\mathbb{C}$. In order for $\pi(g_{C})$ to belong to ${\rm GL}_{2}(\mathbb{C}((z)))$ we would need to impose the condition that $c_{i}=0$ for $i\ll 0$. However, sometimes it is useful to think of the $c_{i}$s as formal variables. In that case, $\pi(g_{C})$ belongs to the invertible elements in $\mathfrak{gl}_{2}(\mathbb{C}((z)))[[c_{i}]]$, and will be a $2\times 2$ matrix with coefficients given by series that are infinite in both directions in~$z$.

In order to define our $\tau$-functions, we need to define fermionic translation operators which we denote $Q_i$, $i=0,1$. In~\eqref{eq:3}, we will carefully define these operators and their action on~$F^{(2)}$, in terms of wedging and contracting operators,
\begin{gather*} e\big(e_{a}^{k}\big)\alpha=e_{a}^{k}\wedge\alpha,\qquad
 i\big(e_{a}^{k}\big)\alpha=\beta, \qquad \text{if} \quad \alpha=e_{a}^{k}\wedge\beta,\end{gather*}
for $\alpha,\beta\in F^{(2)}$. Here, $e_a^{k}=e_{a}z^{k}$, where $e_{a}$ denotes the standard basis vectors of $\mathbb{C}^{2}$, $e_{0}=
\begin{bmatrix}
 1\\0
\end{bmatrix}$, $e_1=
\begin{bmatrix}
 0\\1
\end{bmatrix}$. The projections of the $Q_i$s onto the loop group $\widetilde{\rm GL}_2 $ are given by
\begin{gather*} \pi(Q_{0})=
\begin{bmatrix}
 z\inv&0\\0&-1
\end{bmatrix}, \qquad
\pi(Q_{1})=
\begin{bmatrix}
-1&0\\0& z\inv
\end{bmatrix}.
\end{gather*}
We also need the translation element
\begin{gather*}
T=Q_{1}Q_{0}^{-1}\overset{\pi}\mapsto
\begin{bmatrix}
 -z&0\\0&-z\inv
\end{bmatrix}.
\end{gather*}
We define shifts on the series $C(z)$ by
\begin{gather}
C^{(\alpha)}(z)=(-1)^\alpha z^{\alpha}C(z)=\sum_{i\in\mathbb{Z}}(-1)^\alpha c_{i+\alpha}z^{-i-1}.\label{eq:31}
\end{gather}
We similarly define the shifted group element $g_C^{(\alpha)}=Q_0^{\alpha}g_{C}Q_0^{-\alpha}$ so that
\begin{gather}
\pi\big(g_{C}^{(\alpha)}\big)=\begin{bmatrix}
 1 & 0\\
 C^{(\alpha)}(z) & 1
 \end{bmatrix}=
\begin{bmatrix}
 1 & 0\\
 (-1)^\alpha z^{\alpha}C(z) & 1
\end{bmatrix}.\label{eq:28}
\end{gather}
We then have
\begin{gather}
 Q_{0}\inv g_{C}^{(\alpha+1)}=g_{C}^{(\alpha)}Q_{0}\inv,\label{eq:44}
\end{gather}
and the same relation with $\pi$ applied,
\begin{gather}
\pi(Q_{0})\inv \pi\big(g_{C}^{(\alpha+1)}\big)=\pi\big(g_{C}^{(\alpha)}\big)\pi(Q_{0})\inv.\label{eq:45}
\end{gather}

The fundamental objects in the theory of the Toda lattice (see, e.g., \cite{MR1424652}) and $Q$-systems are the $\tau$-functions defined by
\begin{gather}
\tau_{k}^{(\alpha)}=\big\langle T^{k}v_{0},g_{C}^{(\alpha)}v_{0}\big\rangle.\label{eq:32}
\end{gather}
Here $\langle\,,\rangle$ is the bilinear form on semi-infinite wedges. (For more details, see Appendix~\ref{sec:semi-infinite-wedge}, where we will define a basis for $F^{(2)}$. $\langle\,,\,\rangle$ is the bilinear product with respect to which these basis vectors are orthonormal.)

The $\tau$-functions in the $2\times 2$ case are determinants of Hankel matrices.

\begin{thm}\label{Thm:tauHankel}For $k<0$ and all $\alpha\in\mathbb{Z}$ we have $\tau_{k}^{(\alpha)}=0$, and $\tau_{0}^{(\alpha)}=1$. For $k>0$ and all $\alpha\in\mathbb{Z}$
\begin{gather*}
 \tau_k^{(\alpha)} =\frac1{k!}\Res_{\bf{w}}\left(\prod_{i=1}^{k}C^{(\alpha)}(w_{i})\prod_{1\le i<j\le k}(w_i-w_j)^2\right) \\
\hphantom{\tau_k^{(\alpha)}}{} =(-1)^{k\alpha}\det
 \begin{bmatrix}
 c_{\alpha} & c_{\alpha+1} & \cdots & c_{\alpha+k-1}\\
 c_{\alpha+1} & c_{\alpha+2} & \cdots & c_{\alpha+k}\\
 \vdots & \vdots & \vdots & \vdots\\
 c_{\alpha+k-1} & c_{\alpha+k} & \cdots & c_{\alpha+2k-2}
 \end{bmatrix}.
\end{gather*}
Here $\Res_{\mathbf{w}}=\Res_{w_{1}}\Res_{w_{2}}\cdots\Res_{w_{k}}$, and the residue $\Res_{z}\left(a(z)\right)$ is the coefficient $a_{-1}$ of $z\inv$ in a series $a(z)=\sum\limits_{i\in\mathbb{Z}}a_{i}z^{i}$.
\end{thm}

For the proof of this, see Appendix~\ref{sec:expr-tau-funct-1}.

The simple form of the $\tau$-functions allows us to apply the Desnanot--Jacobi identity (cf.~\cite{MR1718370}) to obtain~\cite{MR2566162} the following difference equations, referred to as the $2Q$-system, satisfied by our $\tau$-functions: For all $k\ge0$ and for all~$\alpha\in\mathbb{Z}$,
\begin{gather*}
\tau_{k}^{(\alpha)}\tau_{k-2}^{(\alpha+2)}=\tau_{k-1}^{(\alpha+2)}\tau_{k-1}^{(\alpha)}-\big(\tau_{k-1}^{(\alpha+1)}\big)^2.\label{eq:50}
\end{gather*}
The disadvantage of obtaining the difference equations in this way is that it is not at all apparent how to generalize this to the $3\times 3$ situation, in which the formulas for the $\tau$-functions are much more complicated. We thus present another way of obtaining our $2\times 2$ difference equations.

\subsection{Birkhoff factorization} \label{sec:birkh-fact}

Define an element of the central extension of the loop group of ${\rm GL}_{2}$:
\begin{gather*}
g^{[k](\alpha)}=T^{-k}g^{(\alpha)}_{C},
\end{gather*}
and assume that it has a Birkhoff factorization \cite{MR699439, PrSe:LpGrps} (see also Appendix~\ref{sec:birkh-fact-n}):
\begin{gather*}
g^{[k](\alpha)}=g^{[k](\alpha)}_{-}g^{[k](\alpha)}_{0+},
\end{gather*}
where $\pi\big(g^{[k](\alpha)}_{-}\big)=1+{O}\big(z\inv\big)$ and $\pi\big(g^{[k](\alpha)}_{0+}\big)=A_{k}^{(\alpha)}+{O}(z)$, for $A_{k}^{(\alpha)}$ an invertible $z$ independent matrix. This assumption is justified precisely when the matrix element $\tau^{(\alpha)}_{k}(g)=\big\langle v_{0},g^{[k](\alpha)}v_{0}\big\rangle$ is not zero, see for instance,~\cite{MR87b:58039}. In their paper, Segal--Wilson treat essentially the case of $n=1$ of the theory of $n$-component fermionic Fock space used in our current paper, although they emphasize the connection to the geometry of infinite Grassmannians, whereas we put the theory of fermion operators in the forefront. Segal--Wilson explain that the vanishing of the $\tau$-function detects that the corresponding element $W=gH_{+}$ of the infinite Grassmannian is not in the big cell. Being in the big cell for $W=gH_{+}$ is equivalent to~$g$ having a Birkhoff factorization. We leave it to the reader to check that this picture still holds for arbitrary~$n$.

Now we want to display the negative component of $\pi\big(g^{[k](\alpha)}\big)$. To calculate this we make some extra structure explicit.

Let $\mathcal{N}$ be the subgroup of elements of $\widetilde{\rm GL}_2 $ of the form~\eqref{eq:7}. We can think of the coeffi\-cients~$c_{k}$ as coordinates on~$\mathcal{N}$, so{\samepage
\begin{gather*}
B=\mathbb{C}[c_{k}]_{k\in\mathbb{Z}}
\end{gather*}
is the coordinate ring of $\mathcal{N}$.}

First define shifts acting on $B$. These are multiplicative maps given on generators by
\begin{gather*}
 S^{\alpha}\colon \ B\to B,\qquad S^{\alpha}(1)=0, \qquad S^{\alpha}(c_{k})=c_{k+\alpha}, \qquad \alpha\in\mathbb{Z}.
\end{gather*}
We will often write $S^{\pm}$ for $S^{\pm1}$.

We also define \emph{shift fields}. These are multiplicative maps
\begin{gather}
S^{\pm}(z)\colon \ B\to B\big[\big[z\inv\big]\big],\label{eq:10}
\end{gather}
given by
\begin{gather*}
S^{\pm}(z)=\left(1-\frac{S^{+}}{z}\right)^{\pm1} ,
\end{gather*}
\begin{thm}\label{Thm:birkh-fact-tau} For $k\ge0$ and all $\alpha\in\mathbb{Z}$
 \begin{gather*}
\pi(g^{[k](\alpha)}_{-})=
\begin{bmatrix}
 S^{+}(z)\tau^{(\alpha)}_{k}& S^{+}(z)\tau^{(\alpha)}_{k-1}/z\\
S^{-}(z)\tau^{(\alpha)}_{k+1}/z&S^{-}(z)\tau^{(\alpha)}_{k}
\end{bmatrix}/\tau^{(\alpha)}_{k}.
\end{gather*}
\end{thm}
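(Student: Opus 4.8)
The plan is to reduce the claim to a single direct computation of matrix elements, after first checking that the proposed matrix has the right shape. Write $M(z)$ for the matrix on the right-hand side multiplied through by $\tau_k^{(\alpha)}$, so that the assertion is $\pi(g^{[k](\alpha)}_-)=M(z)/\tau_k^{(\alpha)}$. The Hankel form of Theorem~\ref{Thm:tauHankel} shows that the elementary shift acts on the superscript, $S^{\beta}\tau_k^{(\alpha)}=\tau_k^{(\alpha+\beta)}$, so that $S^{+}(z)\tau_k^{(\alpha)}=\tau_k^{(\alpha)}-\tau_k^{(\alpha+1)}/z$ and $S^{-}(z)\tau_k^{(\alpha)}=\sum_{m\ge0}\tau_k^{(\alpha+m)}/z^{m}$. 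Substituting these into $M(z)$ and dividing by $\tau_k^{(\alpha)}$, one reads off that the diagonal entries are $1+\mathcal{O}(z\inv)$ and the off-diagonal entries are $\mathcal{O}(z\inv)$; hence $M(z)/\tau_k^{(\alpha)}=1+\mathcal{O}(z\inv)$ already has the normalization required of a negative Birkhoff factor. It remains to identify it with the true factor $\pi(g^{[k](\alpha)}_-)$, and I would do this representation-theoretically, since that is the only argument that survives the passage to $\hatGLthree$.

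The representation-theoretic route starts from the general description of the negative Birkhoff factor by matrix elements (Appendix~\ref{sec:birkh-fact-n}): for a loop group element $g$ with $\langle v_0,gv_0\rangle\ne0$, each entry of $\pi(g_-)$ is a ratio $\langle\,\cdot\,,g\,\cdot\,\rangle/\langle v_0,gv_0\rangle$ in which a pair of fermion generating fields, one for each colour of $F^{(2)}$, is inserted on either side of $g$; the two colours select the matrix indices and the charge-shifted bra/ket vacua supply the accompanying powers of $z$. Applied to $g=g^{[k](\alpha)}=T^{-k}g_C^{(\alpha)}$ this expresses the four entries as four matrix elements sharing the single denominator $\tau_k^{(\alpha)}=\langle v_0,g^{[k](\alpha)}v_0\rangle$.

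The heart of the argument is then bosonic. One commutes the fermion fields through $T^{-k}g_C^{(\alpha)}$ in two stages. Passing them across $\pi(T^{-k})=\mathrm{diag}(z^{-k},z^{k})$ only rescales by $z^{\mp k}$ and relabels the vacuum; combined with the fermion insertions this keeps the level $k$ on the diagonal but shifts it by $\pm1$ off the diagonal, where it also leaves the factor $1/z$. Passing them across the unipotent lower-triangular $g_C^{(\alpha)}$ is where the shift fields are born: since the coordinates $c_i$ on $\mathcal{N}$ play here the role that the KdV times play for $\Gamma(z)$, and $g_C^{(\alpha)}$ is the corresponding group element, each fermion insertion resums in the bosonic picture to the operator $(1-S^{+}/z)^{\pm1}=S^{\pm}(z)$ acting on the appropriate $\tau$-function, with the power $+1$ throughout the first row and $-1$ throughout the second. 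Collecting the four results reproduces $M(z)$ on the nose, which is the assertion $\pi(g^{[k](\alpha)}_-)=M(z)/\tau_k^{(\alpha)}$.

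The main obstacle is exactly this fermion-to-shift-field dictionary: showing that the resummation against $g_C^{(\alpha)}$ produces $S^{+}(z)$ respectively $S^{-}(z)$ rather than some twist, keeping the powers of $z$ and the level shifts to $k-1,k,k+1$ correctly aligned between the two colours, and tracking the central extension so that each matrix element is a genuine $\tau$-function carrying no residual cocycle. As a self-contained alternative to the last two paragraphs, uniqueness of the Birkhoff factorization reduces everything to checking that $(M(z)/\tau_k^{(\alpha)})\inv\pi(g^{[k](\alpha)})$ contains only nonnegative powers of $z$; the cancellation of its negative-power tail is a family of bilinear Hankel-determinant identities, provable by Desnanot--Jacobi exactly as in \eqref{eq:50}. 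That route, however, is the one that fails to generalize to the $3\times3$ setting, which is why I would present the fermionic computation as the main proof.
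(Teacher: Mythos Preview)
Your overall strategy---start from the matrix-element formula of Theorem~\ref{Thm:BirkhoffFactFermionMatrixElement}, specialize to $g^{[k](\alpha)}$, and compute fermionically---is exactly the paper's. But the step you correctly flag as the main obstacle is not carried out: the claim that passing a fermion through $g_C^{(\alpha)}$ ``resums in the bosonic picture to $S^{\pm}(z)$'' is an analogy with KdV vertex operators, not a computation, and nothing in your outline explains why $(1-S^+/z)^{\pm1}$ is the factor that appears.

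The paper's mechanism is more explicit and does not proceed by commuting fermions through $g_C^{(\alpha)}$. One expands $g_C^{(\alpha)}=\sum_l(\Gamma_C^{(\alpha)})^l/l!$; degree counting in $F^{(2)}$ isolates the single term $l=k+a-b$. After commuting $\psi_a^-(z)$ past $T^{-k}$ (a sign times a monomial in $z$) and rewriting $T^kQ_b^{-1}$ in the form $Q_1^\bullet Q_0^\bullet$, the factorization Lemma~\ref{Lem:fact} splits the matrix element into a product of two one-component correlators. These are evaluated in Appendix~\ref{sec:one-comp-ferm}, giving
\[
g_{ab}^{[k](\alpha)}(z)=\frac{\epsilon_{ab}^{[k](\alpha)}(z)}{\tau_k^{(\alpha)}}\,\prod_{i=1}^{l}c_i^{(\alpha)}\!\left(\det\bigl(V^{(l)}_{\{z_i\}}\bigr)^{2}\prod_{j=1}^{l}(z-z_j)^{1-2a}\right).
\]
The shift field now enters for a concrete algebraic reason: inside the residue one has $c^{(\alpha)}(z_j f)=c^{(\alpha+1)}(f)$, so inserting the factor $\prod_j(z-z_j)^{\pm1}$ before taking residues is exactly the action of $S^{\pm}(z)$ on the Hankel determinant $\tau_l^{(\alpha)}$ afterwards. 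Together with $l=k+a-b$ and the leftover $1/z$ on the off-diagonal this reproduces $M(z)$. This explicit route through the factorization lemma and Vandermonde-type correlators is the content your ``resummation'' step is standing in for.
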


We sketch the proof in Appendix \ref{sec:2times2-case-proof}.

Now that we have expressed the negative component of the Birkhoff factorization in terms of matrix elements of the centrally extended loop group, we no longer need the central extension and we will simplify notation: for the remainder of Section~\ref{sec:2x2-case}, we will write $g^{[k](\alpha)}_{-}$ for $\pi\big(g^{[k](\alpha)}_{-}\big)$, $T$ for $\pi(T)$ and~$Q_{a}$ for $\pi(Q_{a})$, $a=0,1$. In particular, in the rest of this section we write
\begin{gather*} T=
\begin{bmatrix}
 -z&0\\0&-z\inv
\end{bmatrix}=Q_{1}Q_{0}\inv,\qquad Q_{0}=
\begin{bmatrix}
 z\inv&0\\0&-1
\end{bmatrix},\qquad Q_{1}=
\begin{bmatrix}
 -1&0\\0&z\inv
\end{bmatrix}.
\end{gather*}

\begin{Remark} Theorem~\ref{Thm:birkh-fact-tau} implies that for $k\ge 0$ we have a Birkhoff factorization for $g^{[k](\alpha)}$, as long as $\tau_{k}^{(\alpha)}$ is not zero. Here we are dropping the $\pi$ as discussed above. It is easy to see that for $k<0$ such a~factorization is not possible. Indeed, assume for simplicity that $\alpha=0$, and consider
\begin{gather*}
T^{k}g_{C}=(-1)^{k}
\begin{bmatrix}
  z^{k}&0\\
z^{-k}C(z)&z^{-k}
\end{bmatrix}=
\begin{bmatrix}
z^{k}&0\\
\big(z^{-k}C(z)\big)_{-}&z^{-k}
\end{bmatrix}
\left(
(-1)^{k}
  \begin{bmatrix}
    1&0\\
    \big(z^{-k}C(z)\big)_{0+}z^{k}&1
  \end{bmatrix}
\right).
\end{gather*}
Here the subscripts $-$, $0+$ on a two sided infinite series in $z$ denote the terms containing negative, respectively non-negative powers of~$z$.

The existence a Birkhoff factorization for $T^{k}g_{C}$ for $k<0$ reduces then to the existence of a~Birkhoff factorization of the left factor
\begin{gather*}
  \Gamma=    \begin{bmatrix}
z^{k}&0\\
(z^{-k}C(z))_{-}&z^{-k}
\end{bmatrix}=
\begin{bmatrix}
z^{k}&0\\
\gamma_{0}z^{-1}+\gamma_{1}z^{-2}+\cdots&z^{-k}
  \end{bmatrix},
\end{gather*}
since the right hand side factor of $T^{k}g_{C}$ already belongs to the non-negative loop group.

If we could write this factor  as $\Gamma_-\Gamma_{0+}$, then we would  have $\Gamma (\Gamma_{0+} )\inv=  \begin{bmatrix}    1&0\\0&1\end{bmatrix}+\mathcal{O}\big(z\inv\big)$. In particular, looking at the   second column of this matrix equality, we see that this would mean  (since the entries of $\Gamma_{0+}$ and its inverse would contain only non-negative powers of $z$) that there are power series $f(z),g(z)\in\mathbb{C}[[z]]$ so that
  \begin{gather*}
f(z)
\begin{bmatrix}
  z^{k}\\ \gamma_{0}z^{-1}+\gamma_{1}z^{-2}+\cdots
\end{bmatrix}+g(z) \begin{bmatrix}   0\\ z^{-k}\end{bmatrix}=\begin{bmatrix}
  0\\ 1 \end{bmatrix}+\mathcal{O}\big(z\inv\big)
.
\end{gather*}
It is clear that for $k<0$ such series $f(z)$, $g(z)$ do not exist (we would need $f(z)$ to be zero, but there is no  $g(z)$ in $\mathbb{C}[[z]]$ such that $g(z)z^{-k}=1 +\mathcal{O}\big(z\inv\big)$).

The argument for $\alpha\ne 0$ is similar.
\end{Remark}

\subsection{Matrix Baker functions and connection matrices}\label{sec:matr-baker-funct}

Next, we define the Baker functions. These are elements of the loop group defined by
\begin{gather}
\Psi^{[k](\alpha)}=T^{k}Q_0^{-\alpha}g_{-}^{[k](\alpha)}.\label{eq:42}
\end{gather}
Since the Baker functions are all invertible, they are related by \emph{connection matrices} belonging to the loop group. Define $\Gamma_{[k](\alpha)}^{[\ell](\beta)}\in \widetilde{\rm GL}_2 $ by
\begin{gather*}
\Psi^{[\ell](\beta)}=\Psi^{[k](\alpha)}\Gamma_{[k](\alpha)}^{[\ell](\beta)}.
\end{gather*}
We are interested in connection matrices that implement nearest neighbor steps on the lattice of Baker functions. So define \emph{elementary connection matrices}
\begin{gather*}
 U_{k}^{(\alpha)}=\Gamma_{[k](\alpha)}^{[k+1](\alpha)},\qquad V_{k}^{(\alpha)}=\Gamma_{[k](\alpha)}^{[k](\alpha+1)},\qquad W_{k}^{(\alpha)}=\Gamma_{[k](\alpha)}^{[k-1](\alpha+1)},
\end{gather*}
so that we have
\begin{gather}
 \Psi^{[k+1](\alpha)}=\Psi^{[k](\alpha)} U_{k}^{(\alpha)},\qquad
 \Psi^{[k](\alpha+1)}=\Psi^{[k](\alpha)} V_{k}^{(\alpha)},\qquad
 \Psi^{[k-1](\alpha+1)}=\Psi^{[k](\alpha)}W_{k}^{(\alpha)}.
\label{eq:43}
\end{gather}
Pictorially:
\begin{equation*}
\begin{tikzpicture}[baseline= (a).base]
\node[scale=.9] (a) at (0,0){
 \begin{tikzcd}
\Psi^{[k](\alpha-1)}
\arrow{rr}{U_{k}^{(\alpha-1)}}
\ar{rd}{V_{k}^{(\alpha-1)}}
&{}&
\Psi^{[k+1](\alpha-1)}
\ar{rr}{U_{k+1}^{(\alpha-1)}}
\ar{rd}{V_{k+1}^{(\alpha-1)}}
\ar{ld}{W_{k+1}^{(\alpha-1)}}&{}&\Psi^{[k+2](\alpha-1)}
\ar{dl}{W_{k+2}^{(\alpha-1)}}\\
{}& \Psi^{[k](\alpha)}
 \ar{rr}[swap]{U_{k}^{(\alpha)}}
 \ar{dr}[swap]{V_{k}^{(\alpha)}}
\ar{dl}[swap]{W_{k}^{(\alpha)}}&{} &
\Psi^{[k+1](\alpha)}
\ar{ld}{W_{k+1}^{(\alpha)}}
\ar{rd}{V_{k+1}^{(\alpha)}}&{}&
 \\
\Psi^{[k-1](\alpha+1)}
\arrow{rr}[swap]{U_{k-1}^{(\alpha+1)}}
&{}&
\Psi^{[k](\alpha+1)}
\ar{rr}[swap]{U_{k}^{(\alpha+1)}}&{}&\Psi^{[k+1](\alpha+1)}
\end{tikzcd}
};
\end{tikzpicture}
\end{equation*}
Walking around the triangles in this diagram, we see that we get
factorizations of all elementary connection matrices. In particular,
\begin{gather}
U_{k}^{(\alpha)}=V_{k}^{(\alpha)}\big(W_{k+1}^{(\alpha)}\big)^{-1}=\big(W_{k+1}^{(\alpha-1)}\big)^{-1}V_{k+1}^{(\alpha-1)}.\label{eq:47}
\end{gather}
Such factorizations are well known in the theory of integrable systems, see for instance Adler~\cite{Ad:BackGelfDick}, Sklyanin~\cite{MR1791892}. They go back to the work of Darboux in the 19th century, see for example~\cite{MR1146435}.

We study the elementary connection matrices more explicitly.
\begin{lem}\label{lem:5}
 \begin{gather*}
 U_{k}^{(\alpha)}=\big(g^{[k](\alpha)}_{-}\big)\inv T g_{-}^{[k+1](\alpha)}= g_{0+}^{[k](\alpha)}\big(g_{0+}^{[k+1](\alpha)}\big)\inv,\\ 
 V_{k}^{(\alpha)}=\big(g_{-}^{[k](\alpha)}\big)\inv Q_{0}\inv g_{-}^{[k](\alpha+1)}= g_{0+}^{[k](\alpha)}Q_{0}\inv \big(g_{0+}^{[k](\alpha+1)}\big)\inv,\\ 
 W_{k}^{(\alpha)}=\big(g_{-}^{[k](\alpha)}\big)\inv Q_{1}\inv g_{-}^{[k-1](\alpha+1)}= g_{0+}^{[k](\alpha)}Q_{0}\inv \big(g_{0+}^{[k-1](\alpha+1)}\big)\inv.
 \end{gather*}
\end{lem}
\begin{proof}The first expression for the elementary connection matrices (in terms of negative components $g^{[k](\alpha)}_{-}$) follows from the definition~\eqref{eq:43} of the connection matrices and the definition~\eqref{eq:42} of the Baker functions. It also uses $Q_{0}T=Q_{1}$.

To derive the second expression for the elementary connection matrices in terms of positive components, $g^{[k](\alpha)}_{0+}$ we use (see also~\eqref{eq:44}, or rather~\eqref{eq:45})
\begin{gather*}
 Tg^{[k+1](\alpha)}_{-}g^{[k+1](\alpha)}_{0+}=g^{[k](\alpha)}_{-}g^{[k](\alpha)}_{0+},\\
 Q_{0}\inv g^{[k](\alpha+1)}_{-}g^{[k](\alpha+1)}_{0+}=g^{[k](\alpha)}_{-}g^{[k](\alpha)}_{0+}Q_{0}\inv,\\
 Q_{1}\inv g^{[k-1](\alpha+1)}_{-}g^{[k-1](\alpha+1)}_{0+}=g^{[k](\alpha)}_{-}g^{[k](\alpha)}_{0+}Q_{0}\inv.
\end{gather*}
Rearranging factors then proves the second form for the connection matrices.
\end{proof}

\begin{Remark}\label{remark1}Note that the second equality in the above lemma tells us that the elementary connection matrices contain only~$z^{k}$ for $k\ge0$. This allows us to easily calculate these connection matrices in terms of $\tau$-functions, as we can ignore any (often complicated) terms that would contribute only negative powers of~$z$.
\end{Remark}

First note that Theorem \ref{Thm:birkh-fact-tau} allows us to expand $g_-^{[k](\alpha)}$ and its inverse up to order $z\inv$ as
\begin{gather}
 g_{-}^{[k](\alpha)}=1_{2\times2}+\frac1{z}
 \begin{bmatrix}
 S^{+}[1]\tau_{k}^{(\alpha)}/\tau_{k}^{(\alpha)}&\frac1{h_{k-1}^{(\alpha)}}\vspace{1mm}\\
 h_{k}^{(\alpha)}& S^{-}[1]\tau^{(\alpha)}_{k}/\tau^{(\alpha)}_{k}
 \end{bmatrix}+ O\big(z^{-2}\big),\nonumber\\
 \big(g_{-}^{[k](\alpha)}\big)\inv =1_{2\times 2}+\frac1{z} \begin{bmatrix}
 -S^{+}[1]\tau_{k}^{(\alpha)}/\tau_{k}^{(\alpha)}&-\frac{1}{h_{k-1}^{(\alpha)}}\vspace{1mm}\\
 -h_{k}^{(\alpha)}&
 -S^{-}[1]\tau^{(\alpha)}_{k}/\tau^{(\alpha)}_{k}
 \end{bmatrix}+ O\big(z^{-2}\big).\label{eq:46}
\end{gather}
Here
\begin{gather*}
h_{k}^{(\alpha)}=\frac{\tau_{k+1}^{(\alpha)}}{\tau_{k}^{(\alpha)}},
\end{gather*}
and we expand the shift fields in partial shifts
\begin{gather*}
S^{\pm}(z)f=\sum_{n=0}^{\infty}S^{\pm}[n]f z^{-n}.
\end{gather*}
\begin{lem}\label{lem:4}
\begin{gather*}
V_{k}^{(\alpha)}=
\begin{bmatrix}
 z+v^{(\alpha)}_{k-1}&\dfrac1{h_{k-1}^{(\alpha+1)}}\vspace{1mm}\\
-h_{k}^{(\alpha)}&-1
\end{bmatrix}, \qquad
W^{(\alpha)}_{k}= \begin{bmatrix}
 -1&-\dfrac1{h^{(\alpha)}_{k-1}}\vspace{1mm}\\
h_{k-1}^{(\alpha+1)}& z+w^{(\alpha)}_{k-1}
\end{bmatrix}, \qquad k=0,1,\dots.
\end{gather*}
Here
\begin{gather*}
v_{k}^{(\alpha)}=\frac{h_{k+1}^{(\alpha)}}{h_{k}^{(\alpha+1)}},\qquad w_{k}^{(\alpha)}=\frac{h_{k}^{(\alpha+1)}}{h_{k}^{(\alpha)}}, \qquad v_{-1}^{(\alpha)}=\frac1{h_{-1}^{(\alpha)}}=0.
\end{gather*}
Note that $\det\big(V^{(\alpha)}_{k}\big)=-z=\det\big(W^{(\alpha)}_k\big)$.
\end{lem}

\begin{proof} As an example, we calculate $V_{k}^{(\alpha)}=\big(g^{[k](\alpha)}_{-}\big)\inv Q_{0}\inv g_-^{[k](\alpha+1)}$, using~\eqref{eq:46}:
\begin{gather*}
 V_{k}^{(\alpha)}=
 \begin{bmatrix}
 1+\dfrac{x}{z}+O\big(z^{-2}\big)&O\big(z\inv\big)\vspace{1mm}\\
 -\dfrac{h_{k}^{(\alpha)}}{z}+O\big(z^{-2}\big)&1+{O}\big(z\inv\big)
 \end{bmatrix}
 \begin{bmatrix}
 z&0\\0&-1
 \end{bmatrix}
 \begin{bmatrix}
 1+\dfrac{y}{z}+O\big(z^{-2}\big)&\dfrac1{zh_{k-1}^{(\alpha+1)}}+O\big(z^{-2}\big)\vspace{1mm}\\
 O\big(z\inv\big)&1+O\big(z\inv\big)
 \end{bmatrix}\\
\hphantom{V_{k}^{(\alpha)}}{} =\begin{bmatrix}
z+x+y&\dfrac1{h_{k-1}^{(\alpha+1)}}\vspace{1mm}\\
-h_{k}^{(\alpha)}&-1
 \end{bmatrix},
 \end{gather*}
dropping all terms containing $z\inv$ or lower, see Remark~\ref{remark1} for why this is justified. Here~$x$,~$y$ are some expressions in the $\tau$-functions which we will determine by noting that $\det\big(V_{k}^{(\alpha)}\big)=-z$. We see that $x+y=\frac{h_{k}^{(\alpha)}}{h_{k-1}^{(\alpha+1)}}=v_{k-1}^{(\alpha)}$. This proves the lemma for $V_{k}^{(\alpha)}$, $k=1,2,\dots$. The proof for~$W_{k}^{(\alpha)}$ and~$V_{0}^{(\alpha)}$ is similar.
\end{proof}

We now return to the two factorizations~\eqref{eq:47} of the connection matrix. Using the expressions for $V_{k}^{(\alpha)}$, $W_{k}^{(\alpha)}$ from Lemma~\ref{lem:4}, we find two expressions for $U_k^{(\alpha)}$:
\begin{gather*}
 U_k^{(\alpha)}=V_k^{(\alpha)}\big(W_{k+1}^{(\alpha)}\big)^{-1} =
 \begin{bmatrix}-z-\dfrac{h_k^{(\alpha)}}{h_{k-1}^{(\alpha+1)}}
 -\dfrac{h_{k}^{(\alpha+1)}}{h_{k}^{(\alpha)}}&
 -\dfrac{1}{h_{k}^{(\alpha)}}\vspace{1mm}\\
h_{k}^{(\alpha)}
 & 0
\end{bmatrix}\nonumber\\
\hphantom{U_k^{(\alpha)}}{}
 =\big(W_{k+1}^{(\alpha-1)}\big)^{-1}V_{k+1}^{(\alpha-1)} =
\begin{bmatrix}-z-\dfrac{h_{k+1}^{(\alpha-1)}}{h_{k}^{(\alpha)}}-\dfrac{h_k^{(\alpha)}}{h_{k}^{(\alpha-1)}} & -\dfrac{1}{h_{k}^{(\alpha)}}\vspace{1mm}\\h_{k}^{(\alpha)} & 0\end{bmatrix},
\end{gather*}
giving equations for the $h_{k}^{(\alpha)}$ variables
\begin{gather}
\frac{h^{(\alpha)}_{k}}{h_{k-1}^{(\alpha+1)}}+\frac{h_{k}^{(\alpha+1)}}{h^{(\alpha)}_{k}}=\frac{h_{k+1}^{(\alpha-1)}}{h_{k}^{(\alpha)}}+\frac{h_{k}^{(\alpha)}}{h_{k}^{(\alpha-1)}}.\label{eq:63}
\end{gather}

\begin{thm}\label{Thm:Qsystem}The equations \eqref{eq:63} are equivalent to the $2Q$-system
\begin{gather*}
\big(\tau_k^{(\alpha)}\big)^2=\tau_{k}^{(\alpha-1)}\tau_{k}^{(\alpha+1)}-\tau_{k+1}^{(\alpha-1)}\tau_{k-1}^{(\alpha+1)},\qquad k=0,1,\dots.
\end{gather*}
\end{thm}
\begin{proof}Write \eqref{eq:63} out in terms of $\tau$-functions
\begin{gather*}
\frac{\tau_{k+1}^{(\alpha)}\tau_{k-1}^{(\alpha+1)}}{\tau_{k}^{(\alpha)}\tau_{k}^{(\alpha+1)}}+
\frac{\tau_{k}^{(\alpha)}\tau_{k+1}^{(\alpha+1)}}{\tau_{k+1}^{(\alpha)}\tau_{k}^{(\alpha+1)}}=
\frac{\tau_{k+2}^{(\alpha-1)}\tau_{k}^{(\alpha)}}{\tau_{k+1}^{(\alpha-1)}\tau_{k+1}^{(\alpha)}}+
\frac{\tau_{k+1}^{(\alpha)}\tau_{k}^{(\alpha-1)}}{\tau_{k}^{(\alpha)}\tau_{k+1}^{(\alpha-1)}}.
\end{gather*}
Bringing all terms under the same denominator and then rearranging terms, we see that this is equivalent to
\begin{gather}\label{eq:1}
\big(\tau_{k}^{(\alpha)}\big)^2\big(\tau_{k+2}^{(\alpha-1)}\tau_{k}^{(\alpha+1)}-\tau_{k+1}^{(\alpha-1)}\tau_{k+1}^{(\alpha+1)}\big)=
\big(\tau_{k+1}^{(\alpha)}\big)^2 \big(\tau_{k+1}^{(\alpha-1)}\tau_{k-1}^{(\alpha+1)}-\tau_{k}^{(\alpha-1)}\tau_{k}^{(\alpha+1)}\big).
\end{gather}
Notice that if
\begin{gather*}
\big(\tau_k^{(\alpha)}\big)^2=\tau_{k}^{(\alpha-1)}\tau_{k}^{(\alpha+1)}-\tau_{k+1}^{(\alpha-1)}\tau_{k-1}^{(\alpha+1)},
\end{gather*}
then \eqref{eq:1} implies
\begin{gather*}
\big(\tau_{k+1}^{(\alpha)}\big)^2=\tau_{k+1}^{(\alpha-1)}\tau_{k+1}^{(\alpha+1)}-\tau_{k+2}^{(\alpha-1)}\tau_k^{(\alpha+1)}.
\end{gather*}
We thus need only prove that the equality holds for $k=0$. But this is just
\begin{gather*}
\big(\tau_{0}^{(\alpha)}\big)^2=\tau_{0}^{(\alpha-1)}\tau_{0}^{(\alpha+1)}-\tau_{1}^{(\alpha-1)}\tau_{-1}^{(\alpha+1)},
\end{gather*}
which is true since $\tau_{-1}^{(\alpha)}=0$ and $\tau_{0}^{(\alpha)}=1$ for all $\alpha$. So the theorem follows.
\end{proof}

So we have rederived {the $2Q$-system}, see the equations \eqref{eq:50}, using the Birkhoff factorization.

\section[$3\times 3$ case]{$\boldsymbol{3\times 3}$ case}\label{sec:wideh-tau-funct}

\subsection[$\tau$-functions]{$\boldsymbol{\tau}$-functions}\label{sec:tau-functions}

We now discuss the generalization to the $3\times 3$ case, proceeding very similarly to the $2\times2$ case.

We have an action of the central extension $\widehat{\rm GL}_3$ of the loop group $\widetilde{\rm GL}_3 ={\rm GL}_{3}\big(\mathbb{C}\big(\big(z\inv\big)\big)\big)$ on three-component fermionic Fock space $F^{(3)}$. See, e.g.,~\cite{tKvdL:BosFer} for $n$-component fermions, and~\cite{KaPe:LectinfwedgMKP} for the construction of central extensions of Lie algebras and corresponding groups. Some of this material is reviewed in Appendix~\ref{sec:ferm-semi-infin}. Let $\pi\colon \widehat{\rm GL}_3 \to \widetilde{\rm GL}_3 $ be the projection onto the non-centrally extended loop group and consider the action of the group element, $g_{C,D,E}\in\widehat{\rm GL}_3$, where
\begin{gather*}
 \pi(g_{C,D,E})= \begin{bmatrix}
 1 & 0&0\\
 C(z) & 1&0\\
 D(z)&E(z)&1
 \end{bmatrix},
\end{gather*} on the vacuum vector of $F^{(3)}$. Here
\begin{gather*}
X(z)= \sum_{i\in \mathbb{Z}} x_{i}z^{-i-1},\qquad X=C,D,E, \qquad x=c,d,e,
\end{gather*}
where the $x_{i}$ are formal variables and the vacuum vector, $v_0$ is, analogous to the $2\times 2$ case,
\begin{gather*} v_{0}=
\begin{bmatrix}
1\\0\\0\end{bmatrix}\wedge
\begin{bmatrix}
0\\1\\0\end{bmatrix}\wedge
\begin{bmatrix}
0\\0\\1\end{bmatrix}
\wedge
\begin{bmatrix}
z\\0\\0\end{bmatrix}\wedge
\begin{bmatrix}
0\\z\\0\end{bmatrix}\wedge
\begin{bmatrix}
0\\0\\z\end{bmatrix}
\wedge
\begin{bmatrix}
z^2\\0\\0\end{bmatrix}\wedge
\begin{bmatrix}
0\\z^2\\0\end{bmatrix}\wedge
\begin{bmatrix}
0\\0\\z^2\end{bmatrix}
\wedge\cdots,
\end{gather*}
see~\eqref{eq:4}.

As in the $2\times 2$ case, we have fermionic translation operators $Q_i$, $0\le i\le 2$. The action of these $Q_i$s on $F^{(3)}$ is defined carefully in the appendix (see \eqref{eq:3}). Their projections onto the loop group $\widetilde{\rm GL}_3 $ are given by the following (commuting) matrices
\begin{gather*}
\pi(Q_{0})=
\begin{bmatrix}
 z\inv&0&0\\0&-1&0\\0&0&-1
\end{bmatrix}, \qquad\!
\pi(Q_{1})=
\begin{bmatrix}
-1&0&0\\0& z\inv&0\\0&0&-1
\end{bmatrix},\qquad\! \pi(Q_{2})=
\begin{bmatrix}
 -1&0&0\\0&-1&0\\0&0&z\inv
\end{bmatrix}.
\end{gather*}
We also have the translation elements
\begin{gather*}
T_{1}=Q_{1}Q_{0}^{-1}\overset{\pi}\mapsto
\begin{bmatrix}
 -z&0&0\\0&-z\inv&0\\0&0&1
\end{bmatrix}, \qquad
T_{2}=Q_{2}Q_{1}^{-1}\overset{\pi}\mapsto
\begin{bmatrix}
 1&0&0\\0&-z&0\\0&0&-z\inv
\end{bmatrix}.\end{gather*}
We define shifts on the series $X(z)$ by
\begin{gather}
X^{(\alpha)}(z)=(-1)^\alpha z^{\alpha}X(z)=\sum_{i\in\mathbb{Z}}(-1)^\alpha x_{i+\alpha}z^{-i-1},\qquad X=C,D,E,\qquad x=c,d,e.\label{eq:61}
\end{gather}
\begin{Remark}It is convenient to allow our series to be infinite in both directions. This causes no issues of convergence, if we think of the coefficients of these series to be formal variables. For example, if
\begin{gather*}
a(z)=\sum_{i\in\mathbb{Z}}a_{i}z^{-i-1},\qquad b(z)=\sum_{i\in\mathbb{Z}}b_{i}z^{-i-1}
\end{gather*}
then
\begin{gather*}
a(z)b(z)=\sum_{k\in\mathbb{Z}}\left(\sum_{i\in\mathbb{Z}}a_{i}b_{k-i-1}\right)z^{-k-1}
\end{gather*}
has as coefficient of $z^{-k-1}$ the well defined element
\begin{gather*}
\sum_{i\in\mathbb{Z}}a_{i}b_{k-i-1}\in\mathbb{C}[[a_{i},b_{i}]]_{i\in\mathbb{Z}}.
\end{gather*}
\end{Remark}

Analogous to the shifted group elements of the $2\times 2$ case, see \eqref{eq:7}, are the shifted group elements $g^{(\alpha,\beta)}=Q_0^{\alpha}Q_{1}^{\beta}g_{C,D,E}Q_{1}^{-\beta}Q_0^{-\alpha}$,
\begin{gather}
\pi\big(g_{C,D,E}^{(\alpha,\beta)}\big)=\begin{bmatrix}
 1 & 0&0\\
 C^{(\alpha-\beta)}(z) & 1&0\\
 D^{(\alpha)}(z)&E^{(\beta)}(z)&1
 \end{bmatrix}.
\label{eq:55}
\end{gather}
We then have (using $Q_{0}Q_{1}=-Q_{1}Q_{0}$)
\begin{gather*}
 Q_{0}\inv g^{(\alpha+1,\beta)}=g^{(\alpha,\beta)}Q_{0}\inv, \qquad Q_{1}\inv g^{(\alpha,\beta+1)}=g^{(\alpha,\beta)}Q_{1}\inv,
\end{gather*}
and the same relations with $\pi$ applied,
\begin{gather}
\pi (Q_{0})\inv \pi\big(g^{(\alpha+1,\beta)}\big)=\pi\big(g^{(\alpha,\beta)}\big)\pi(Q_{0})\inv,\nonumber\\
\pi(Q_{1})\inv \pi\big(g^{(\alpha,\beta+1)}\big) =\pi\big(g^{(\alpha,\beta)}\big)\pi(Q_{1})\inv.\label{eq:57}
\end{gather}

Similarly to the $2\times2$ case, the fundamental objects in the $3\times 3$ theory are the $\tau$-functions defined by
\begin{gather}
\tau_{k,\ell}^{(\alpha,\beta)}=\big\langle T_{1}^{k}T_{2}^{\ell}v_{0},g^{(\alpha,\beta)}v_{0}\big\rangle.\label{eq:62}
\end{gather}
Here $v_{0}$ is the vacuum vector in the three-component fermionic Fock space $F^{(3)}$, and $\langle \,,\, \rangle$ is the bilinear form, see Appendix~\ref{sec:semi-infinite-wedge}. (As in the $2\times 2$ case, in the appendix we define a basis for~$F^{(3)}$, and $\langle \,,\,\rangle$ is the bilinear form with respect to which these basis vectors are orthonormal.) Note that if we introduce another translation group element $T_{3}=Q_{2}Q_{0}\inv$ then we can write nonuniquely
\begin{gather*}
T_{1}^{k}T_{2}^{\ell}=(\pm1)T_{1}^{n_{c}}T_{2}^{n_{e}}T_{3}^{n_{d}},
\end{gather*}
where $k=n_{c}+n_{d}$, $\ell=n_{d}+n_{e}$, and we take $n_{c},n_{d},n_{e}\ge0$.

\begin{thm}\label{thm:n=3taufunctions} For all $\alpha,\beta\in\mathbb{Z}$ and $k,\ell\ge0$
\begin{gather*}
\tau^{\alpha,\beta}_{k,\ell}=\sum_{n_{c}+n_{d}=k,n_{d}+n_{e}=\ell \atop n_{c},n_{d},n_{e}\ge0}c^{(\alpha,\beta)}_{n_{c},n_{d},n_{e}},
\end{gather*}
where
\begin{gather*}
c^{(\alpha,\beta)}_{n_{c},n_{d},n_{e}}= \frac{(-1)^\frac{{n_{d}(n_{d}+1)}}{2}}{n_{c}!n_{d}!n_{e}!}\Res_{\mathbf{x},\mathbf{y},\mathbf{z}}\left(\prod_{i=1}^{n_{c}}C^{(\alpha-\beta)}(x_{i})
\prod_{i=1}^{n_{d}}D^{(\alpha)}(y_{i})\prod_{i=1}^{n_{e}}E^{(\beta)}(z_{i}) p_{n_{c},n_{d},n_{e}}\right),
\end{gather*}
and
\begin{gather*}
 p_{n_{c},n_{c},n_{e}} = \prod\limits_{1\le i<j\le n_{c}}(x_{i}-x_{j})^{2} \prod\limits_{1\le i<j\le n_{d}}(y_{i}-y_{j})^{2} \prod\limits_{1\le i<j\le n_{e}}(z_{i}-z_{j})^{2}\\
\hphantom{p_{n_{c},n_{c},n_{e}} =}{} \times\frac{ \prod\limits_{i=1}^{n_{c}} \prod\limits_{j=1}^{n_{d}}(x_{i}-y_{j})
 \prod\limits_{i=1}^{n_{d}} \prod\limits_{j=1}^{n_{e}}(y_{i}-z_{j})} { \prod\limits_{i=1}^{n_{c}} \prod\limits_{j=1}^{n_{e}}(x_{i}-z_{j})}.
\end{gather*}
\end{thm}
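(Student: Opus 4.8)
The plan is to mimic the fermionic computation behind Theorem~\ref{Thm:tauHankel}, carried out now in $F^{(3)}$, organizing the answer by the three ``species'' of elementary moves recorded in the off-diagonal entries of \eqref{eq:55}.

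First I would replace the matrix element \eqref{eq:62} by a vacuum expectation of fermion operators. The unipotent matrix \eqref{eq:55} is the projection of a product of three one-parameter root subgroups, whose lifts to $\hatGLthree$ are exponentials of the bilinear currents
\[
J_{C}=\Res_{w}C^{(\alpha-\beta)}(w):\!\psi^{(2)}(w)\psi^{\ast(1)}(w)\!:,\qquad
J_{E}=\Res_{w}E^{(\beta)}(w):\!\psi^{(3)}(w)\psi^{\ast(2)}(w)\!:,
\]
together with $J_{D}=\Res_{w}D^{(\alpha)}(w):\!\psi^{(3)}(w)\psi^{\ast(1)}(w)\!:$, the central terms contributing only an overall scalar. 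Expanding the exponentials, a monomial built from $n_{c}$ factors of $J_{C}$, $n_{d}$ of $J_{D}$ and $n_{e}$ of $J_{E}$ removes $n_{c}+n_{d}$ particles from component~$1$ and adds $n_{d}+n_{e}$ to component~$3$; equating the resulting charge sector with the one selected by $\langle T_{1}^{k}T_{2}^{l}v_{0}|$ forces $n_{c}+n_{d}=k$ and $n_{d}+n_{e}=l$. These are exactly the triples indexing the asserted sum, with $n_{d}$ the free parameter, and the $1/(n_{c}!\,n_{d}!\,n_{e}!)$ is the symmetry factor from the three Taylor series.

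Next I would pin down the sign for each fixed triple. Here the identity $T_{3}=Q_{2}Q_{0}\inv=\pm Q_{1}Q_{0}\inv Q_{2}Q_{1}\inv=\pm T_{1}T_{2}$, which uses the anticommutation $Q_{a}Q_{b}=-Q_{b}Q_{a}$ (as already invoked for \eqref{eq:56}), lets me rewrite $T_{1}^{k}T_{2}^{l}$ as $\pm T_{1}^{n_{c}}T_{2}^{n_{e}}T_{3}^{n_{d}}$, matching the $D$-moves to the $T_{3}$-direction exactly as in the decomposition noted after \eqref{eq:62}. The sign accumulated in commuting the $n_{d}$ copies of $T_{3}$ into place is the source of the prefactor $(-1)^{n_{d}(n_{d}+1)/2}$. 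With this bookkeeping each surviving term becomes, after Wick contraction, a single residue in variables $x_{1},\dots,x_{n_{c}}$ (from $J_{C}$), $y_{1},\dots,y_{n_{d}}$ (from $J_{D}$) and $z_{1},\dots,z_{n_{e}}$ (from $J_{E}$) integrated against $C^{(\alpha-\beta)},D^{(\alpha)},E^{(\beta)}$.

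The heart of the argument is evaluating the Wick factor $p_{n_{c},n_{d},n_{e}}$. Each species contributes a same-component Cauchy determinant which, as in the $2\times2$ case, collapses to a squared Vandermonde $\prod_{i<j}(x_{i}-x_{j})^{2}$, $\prod_{i<j}(y_{i}-y_{j})^{2}$, $\prod_{i<j}(z_{i}-z_{j})^{2}$. The cross factors come from the shared components: $J_{C}$ and $J_{D}$ both carry $\psi^{\ast(1)}$, so contracting them in component~$1$ gives the numerator factor $\prod_{i,j}(x_{i}-y_{j})$, and $J_{D},J_{E}$ share $\psi^{(3)}$ and give $\prod_{i,j}(y_{i}-z_{j})$; but $J_{C}$ creates in component~$2$ while $J_{E}$ annihilates there, so they enter the Cauchy kernel with opposite roles and instead produce the denominator factor $1/\prod_{i,j}(x_{i}-z_{j})$. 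The main obstacle is precisely this cross-species bookkeeping: one must verify that a shared component yields a Vandermonde (numerator) factor when the two currents play the same role and a Cauchy (denominator) factor when they play opposite roles, and that all resulting signs---from the anticommutations needed to bring the operators to Slater-determinant form, and from the $EC$ correction implicit in the root-subgroup factorization of \eqref{eq:55}---combine with the $Q$-reordering above to \emph{exactly} $(-1)^{n_{d}(n_{d}+1)/2}$. Once the Cauchy identity is applied species-by-species and these signs are reconciled, collecting residues yields the stated $c^{(\alpha,\beta)}_{n_{c},n_{d},n_{e}}$ and hence the theorem.
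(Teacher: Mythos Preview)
Your proposal is correct and follows essentially the same route as the paper: expand $g^{(\alpha,\beta)}=\exp(\Gamma_{C})\exp(\Gamma_{D})\exp(\Gamma_{E})$, use the charge grading to isolate the triples $(n_{c},n_{d},n_{e})$, factor the resulting correlator by component, and evaluate the three one-component pieces as Vandermonde or mixed Cauchy determinants to obtain $p_{n_{c},n_{d},n_{e}}$. The paper makes the species-by-species split explicit via the factorization Lemma (Appendix~\ref{sec:reduct-one-comp}) and then invokes the one-component correlation lemmas of Appendix~\ref{sec:one-comp-ferm}, whereas you phrase the same computation as Wick contraction; the sign bookkeeping you describe via reordering $T_{1}^{k}T_{2}^{l}$ is exactly the computation the paper does directly with the anticommutation of the $Q_{a}$.
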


Here and from now on, we use the convention that we expand $\frac{1}{x-z}$ in positive powers of the second variable, so $\frac{1}{x-z}= \sum\limits_{i=0}^\infty\frac{z^i}{x^{i+1}}$.

We discuss the proof of the above theorem in Appendix~\ref{sec:proof-n=3}.

\subsection[Examples of $3\times 3$ $\tau$-functions]{Examples of $\boldsymbol{3\times 3}$ $\boldsymbol{\tau}$-functions}\label{sec:examples-3times-3}
\begin{enumerate}\itemsep=0pt
\item \label{item:8} $\tau_{k,\ell}^{(\alpha,\beta)}=0$ if $k<0$ or $\ell<0$.
\item $\tau_{0,0}^{(\alpha,\beta)}=1$.
\item \label{item:9} $\tau_{k,0}^{(\alpha,\beta)}=(-1)^{k(\alpha+\beta)}
\det\begin{bmatrix}
c_{\alpha-\beta} & c_{\alpha-\beta+1} & \cdots & c_{\alpha-\beta+k-1}\\
c_{\alpha-\beta+1} & c_{\alpha-\beta+2} & \cdots & c_{\alpha-\beta+k}\\
\vdots & \vdots & \cdots & \vdots\\
c_{\alpha-\beta+k-1} & c_{\alpha-\beta+k} & \cdots & c_{\alpha-\beta+2k-2}.
\end{bmatrix}$.
\item \label{item:7} $\tau_{0,\ell}^{(\alpha,\beta)}=(-1)^{\beta\ell}\det\begin{bmatrix}
e_{\beta} & e_{\beta+1} & \cdots & e_{\beta+\ell-1}\\
e_{\beta+1} & e_{\beta+2} & \cdots & e_{\beta+\ell}\\
\vdots & \vdots & \cdots & \vdots\\
e_{\beta+\ell-1} & e_{\beta+\ell} & \cdots & e_{\beta+2\ell-2}\\
\end{bmatrix}$.
\item $\tau_{1,1}^{(\alpha,\beta)}=(-1)^{\alpha}\left(-d_{\alpha}+\displaystyle \sum_{i=0}^{\infty}e_{\beta+i}c_{\alpha-\beta-i-1}\right)$.
\item
 $\tau_{1,2}^{(\alpha,\beta)}=(-1)^{\alpha+\beta}\left(e_{\beta}\displaystyle\sum_{i=1}^{\infty}e_{\beta+i+1}c_{\alpha-\beta-i-1}-e_{\beta+1}\right. $\\
 $\left.\hphantom{\tau_{1,2}^{(\alpha,\beta)}=}{} \times
 \displaystyle\sum_{i=0}^{\infty}e_{\beta+i+1}c_{\alpha-\beta-i-2}+e_{\beta+1}d_{\alpha}-e_{\beta}d_{\alpha+1}\right)$.
\item $\tau_{2,1}^{(\alpha,\beta)}=(-1)^\beta\left(\!c_{\alpha-\beta+1}\displaystyle \sum_{i=0}^{\infty}e_{\beta+i}c_{\alpha-\beta-i-1}-c_{\alpha-\beta}\displaystyle \sum_{i=0}^{\infty}e_{\beta+i}c_{\alpha-\beta-i}+c_{\alpha-\beta}d_{\alpha+1}-c_{\alpha-\beta+1}d_{\alpha}\!\right)$.
\end{enumerate}

\begin{Remark} \label{rem:degrees}Note that the summands $c^{(\alpha,\beta)}_{n_{c},n_{d},n_{e}}$ of the $\tau$-functions are of degree $n_{x}$ in the coefficients $x_{k}$ of the series $X(z)=\sum x_{k}z^{-k-1}$, for $x=c,d,e$, $X=C,D,E$.
\end{Remark}

\subsection[Birkhoff factorization for the $3\times 3$ case]{Birkhoff factorization for the $\boldsymbol{3\times 3}$ case} \label{sec:3x3birkh-fact}

Define centrally extended loop group elements
\begin{gather*}
g^{[k,\ell](\alpha,\beta)}=T^{-\ell}_{2}T_{1}^{-k}g^{(\alpha,\beta)},
\end{gather*}
and assume that they have a Birkhoff factorization \cite{PrSe:LpGrps} (see Appendix~\ref{sec:birkh-fact-n}):
\begin{gather*}
g^{[k,\ell](\alpha,\beta)}=g^{[k,\ell](\alpha,\beta)}_{-}g^{[k,\ell](\alpha,\beta)}_{0+},
\end{gather*}
where $\pi\big(g^{[k,\ell](\alpha,\beta)}_{-}\big)=1+O\big(z\inv\big)$ and $\pi\big(g^{[k,\ell](\alpha,\beta)}_{0+}\big)=A_{k,\ell}^{(\alpha,\beta)}+O(z)$, for $A_{k,\ell}^{(\alpha,\beta)}$ an invertible $z$ independent matrix. As in the $2\times2$ case, this assumption is justified precisely when $\tau^{(\alpha,\beta)}_{k,\ell}(g)=\big \langle v_{0},g^{[k,\ell](\alpha,\beta)}v_{0}\big\rangle$ is not zero, see the discussion at the beginning of Section~\ref{sec:birkh-fact}.

Now we want to display the negative component of $\pi\big(g^{[k,\ell](\alpha,\beta)}\big)$. As we did in the $2\times 2$ case, to calculate this we make some extra structure explicit, see Section~\ref{sec:birkh-fact} for the simpler situation.

Let $\mathcal{N}$ be the subgroup of elements of $\widetilde{\rm GL}_3 $ of the form~\eqref{eq:55}. We can think of the coefficients~$x_{k}$, $x=c,d,e$ as coordinates on $\mathcal{N}$, so{\samepage
\begin{gather*}
B=\mathbb{C}[[c_{k},d_{k},e_{k}]]_{k\in\mathbb{Z}}
\end{gather*}
is the coordinate ring of $\mathcal{N}$.}

We first define shifts acting on $B$: these are multiplicative maps given on generators by ($x,y\in\{c,d,e\}$)
\begin{gather*}
 S_{x}^{\alpha}\colon \ B\to B,\qquad S_{x}^{\alpha}(1)=0, \qquad S_{x}^{\alpha}(x_{k})=x_{k+\alpha},\nonumber\\
 \hphantom{S_{x}^{\alpha}\colon} \ S_{x}^{\alpha} (y_{k} )=y_{k},\qquad y\ne x , \qquad \alpha\in\mathbb{Z}.
\end{gather*}
We will often write $S_{x}^{\pm}$ for $S_{x}^{\pm1}$.

We also define \emph{shift fields}. These are multiplicative maps
\begin{gather*}
S_{x}^{\pm}(z)\colon \ B\to B\big[\big[z\inv\big]\big],
\end{gather*}
given by
\begin{gather*}
S_{x}^{\pm}(z)=\left(1-\frac{S_{x}^{+}}{z}\right)^{\pm1}.
\end{gather*}
\begin{thm}\label{Thm:birkh-3x3fact-tau} For $k,\ell\ge0$ and all $\alpha,\beta\in\mathbb{Z}$
 \begin{gather*}
\pi\big(g^{[k,\ell](\alpha,\beta)}_{-}\big)=\big(\Sigma\mathcal{T}_{k,\ell}^{(\alpha,\beta)}\big)/\tau_{k,\ell}^{(\alpha,\beta)},
\end{gather*}
where
\begin{gather*}
\Sigma=\begin{bmatrix}
 S_{c}^{+}(z)S_{d}^{+}(z)&0&0\\
 0&S^{-}_{c}(z)S^{+}_{e}(z)&0\\
 0&0&S^{-}_{d}(z)S_{e}^{-}(z)
\end{bmatrix},\\
\mathcal{T}_{k,\ell}^{(\alpha,\beta)}=
\begin{bmatrix}
\tau_{k,\ell}^{(\alpha,\beta)}&
(-1)^\ell \dfrac{\tau_{k-1,\ell}^{(\alpha,\beta)}}{z}&(-1)^{\ell+1}\dfrac{\tau_{k-1,\ell-1}^{(\alpha,\beta)}}{z}\vspace{1mm}\\
(-1)^\ell\dfrac{\tau_{k+1,\ell}^{(\alpha,\beta)}}z&
\tau_{k,\ell}^{(\alpha,\beta)}&\dfrac{\tau_{k,\ell-1}^{(\alpha,\beta)}}{z}\vspace{1mm}\\
(-1)^{\ell+1}\dfrac{\tau_{k+1,\ell+1}^{(\alpha,\beta)}}z&
\dfrac{\tau_{k,\ell+1}^{(\alpha,\beta)}}{z}&\tau_{k,\ell}^{(\alpha,\beta)}
\end{bmatrix}.
\end{gather*}
\end{thm}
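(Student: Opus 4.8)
The plan is to follow the strategy used for the $2\times2$ result (Theorem~\ref{Thm:birkh-fact-tau}, proved in Appendix~\ref{sec:2times2-case-proof}), since the present statement is its three–component analogue. The guiding principle, borrowed from the theory of the multicomponent KP hierarchy, is that the negative factor of a Birkhoff factorization is a matrix of \emph{wave functions}, each of which is a ratio of a fermionic matrix element to the $\tau$-function. Concretely, I would show that, up to a sign, the $(a,b)$-entry of $\pi(g^{[k,l](\alpha,\beta)}_{-})$ is a ratio
\[
\frac{\big\langle\, v_0,\ \psi^{(a)}(z)\, g^{[k,l](\alpha,\beta)}\, \psi^{*(b)} v_0\,\big\rangle}{\tau^{(\alpha,\beta)}_{k,l}}
\]
of a matrix element on $F^{(3)}$ carrying one creation field $\psi^{(a)}(z)$ of component $a$ (holding the spectral parameter) and one annihilation field $\psi^{*(b)}$ of component $b$, to the $\tau$-function $\tau^{(\alpha,\beta)}_{k,l}$ of \eqref{eq:62}.

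The key steps, in order, would be the following. First, establish this wave–function formula from the definition \eqref{eq:58} of $g^{[k,l](\alpha,\beta)}$ and its Birkhoff factorization, paralleling the $2\times2$ derivation. Second, read off the bi-index of the $\tau$-function in each entry: the pair of insertions shifts the component–charge vector by $\epsilon_a-\epsilon_b$ relative to the configuration $T_1^{k}T_2^{l}v_0$, whose charges are the exponents $(-k,\,k-l,\,l)$ on the diagonal of $\pi(T_2^{-l}T_1^{-k})=\mathrm{diag}(z^{-k},z^{k-l},z^{l})$; re-expressing the shifted charge as $(-k',\,k'-l',\,l')$ yields exactly the bi-indices $(k,l),(k\mp1,l),(k,l\mp1),(k\pm1,l\pm1)$ recorded in $\mathcal{T}^{(\alpha,\beta)}_{k,l}$. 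Third, pass to the bosonic side via the boson–fermion correspondence: the $z$-dependence of the field $\psi^{(a)}(z)$ becomes a vertex operator acting on the coordinate ring $B=\mathbb{C}[[c_k,d_k,e_k]]$, which I would identify with the shift fields of \eqref{eq:59}–\eqref{eq:60}. Here the $+/-$ pattern is dictated by the root structure of \eqref{eq:54}: the variable $c$ (entry $(2,1)$) contributes $S_c^{+}(z)$ to row $1$ and $S_c^{-}(z)$ to row $2$, the variable $d$ (entry $(3,1)$) links rows $1,3$, and $e$ (entry $(3,2)$) links rows $2,3$; collecting these reproduces the diagonal matrix $\Sigma$.

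The main obstacle, exactly as in the $2\times2$ case but more intricate, is the sign bookkeeping producing the factors $(-1)^{k}$ and $(-1)^{k+1}$ in $\mathcal{T}^{(\alpha,\beta)}_{k,l}$. These come from the fermionic nature of the translation operators — recall $Q_0Q_1=-Q_1Q_0$, already used in \eqref{eq:56}–\eqref{eq:57} — so that moving the creation field $\psi^{(a)}$ past the block $T_1^{k}$ produces a sign governed by the parity of $k$. Pinning down these signs, together with the correct normalization of the vertex operators so that they equal the shift fields $S^{\pm}_{x}(z)=(1-S^{+}_{x}/z)^{\pm1}$ on the nose rather than up to a scalar, is the delicate part; by contrast the $\tau$-index shifts and the $+/-$ pattern in $\Sigma$ are mechanical once the charge/root dictionary is fixed.

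As an independent check I would invoke uniqueness of the Birkhoff factorization. The proposed right–hand side is manifestly of the form $1+\mathcal{O}(z^{-1})$ — each shift field is $1+\mathcal{O}(z^{-1})$, the diagonal entries of $\mathcal{T}^{(\alpha,\beta)}_{k,l}$ equal $\tau^{(\alpha,\beta)}_{k,l}$, and every off–diagonal entry carries an explicit factor $1/z$ — so it only remains to verify that $\big(\pi(g^{[k,l](\alpha,\beta)}_{-})\big)^{-1}\pi(g^{[k,l](\alpha,\beta)})$ has no strictly negative powers of $z$. Using the explicit residue/determinant expressions of Theorem~\ref{thm:n=3taufunctions}, this reduces to a finite list of bilinear identities among the $\tau$-functions, which can be checked directly and which serve as a stringent consistency test for the signs determined above.
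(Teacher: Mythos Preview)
Your plan matches the paper's own approach: the paper's ``proof'' (Appendix~\ref{sec:birkh-fact-3tim}) is literally the single sentence that the argument is similar to the $2\times2$ case sketched in Appendix~\ref{sec:2times2-case-proof}, with details left to the reader, so any faithful adaptation of that argument is what is intended.

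One point of comparison worth making precise. Your Step~3 invokes the boson--fermion correspondence to turn $\psi^{(a)}(z)$ into a vertex operator on $B$ and then identify it with the shift fields. The paper's $2\times2$ argument is more pedestrian and does not use bosonization: it starts from the general formula of Theorem~\ref{Thm:BirkhoffFactFermionMatrixElement},
\[
g_{ab}(z)=\big\langle Q_b^{-1}v_0,\ \psi_a^{-}(z)\,\hat g\,v_0\big\rangle\big/\tau(\hat g),
\]
commutes $\psi_a^{-}(z)$ past the translation block, applies the factorization Lemma~\ref{Lem:fact} to split the matrix element into a product of one--component correlators, and then evaluates those correlators explicitly (Lemmas~\ref{lem:3} and~\ref{lem:6}). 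The shift fields appear because the one--component answers carry factors $\prod_j(z-w_j)^{\pm1}$; under the residue maps $\Res_{w_j}\big(X(w_j)\,\cdot\,\big)$ with $X=C,D,E$, each factor $(1-w_j/z)^{\pm1}$ becomes the operator $(1-S_x^{+}/z)^{\pm1}=S_x^{\pm}(z)$ on the coefficient variables $x=c,d,e$, which after comparison with the residue formula of Theorem~\ref{thm:n=3taufunctions} yields $\Sigma$ acting on the appropriate $\tau$. Your vertex--operator language would reach the same destination, but if you want to mirror the paper you should plan on the explicit factorization--then--residue route rather than an abstract bosonization; this is also where the $\pm$ pattern on $c,d,e$ and the signs $(-1)^{k},(-1)^{k+1}$ fall out mechanically from the commutation of $\psi_a^{-}(z)$ with $T_1^{-k}T_2^{-l}$ and from Lemma~\ref{Lem:GeneratorsTranslationGroupIdentities}.
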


We sketch the proof in Appendix~\ref{sec:birkh-fact-3tim}.

As in the $2\times 2$ case, we have now expressed the negative component of the Birkhoff factorization in terms of matrix elements of the centrally extended loop group, so we no longer need the central extension and we will simplify notation by writing $g^{[k,\ell](\alpha,\beta)}_{-}$ for $\pi\big(g^{[k,\ell](\alpha,\beta)}_{-}\big)$, $T_{i}$~for~$\pi(T_{i})$, $i=1,2$ and similarly $Q_{a}$ for $\pi(Q_{a})$, $a=0,1,2$. In particular, in the rest of this section we write
\begin{gather*}
T_{1}=\begin{bmatrix}
 -z&0&0\\0&-z\inv&0\\
0&0&1
\end{bmatrix}=Q_{1}Q_{0}\inv,\qquad
T_{2}=
\begin{bmatrix}
 1&0&0\\0&-z&0\\
0&0&-z\inv
\end{bmatrix}=Q_{2}Q_{1}\inv,
\end{gather*}
and
\begin{gather*} Q_{0}=
\begin{bmatrix}
 z\inv&0&0\\0&-1&0\\
0&0&-1
\end{bmatrix},\qquad
Q_{1}=
\begin{bmatrix}
 -1&0&0
\\0&z\inv&0\\
0&0&-1
\end{bmatrix},\qquad
Q_{2}=
\begin{bmatrix}
 -1&0&0
\\0&-1&0\\
0&0&z\inv
\end{bmatrix}.
\end{gather*}

\subsection[Matrix Baker functions and connection matrices, $3\times 3$ case]{Matrix Baker functions and connection matrices, $\boldsymbol{3\times 3}$ case}\label{sec:3x3matr-baker-funct}

Next, we define the Baker functions. These are now elements of the loop group of ${\rm GL}_{3},$ defined by
\begin{gather*}
 \Psi^{[k,\ell](\alpha,\beta)} =T_{1}^{k}T_{2}^{\ell}Q_0^{-\alpha}Q_1^{-\beta}g_{-}^{[k,\ell](\alpha,\beta)}.
\end{gather*}
Since the Baker functions are all invertible, they are related by (right) multiplication by connection matrices belonging to $\widetilde{\rm GL}_3 $. In particular, define
\begin{gather*}
 \Psi^{[k^{\prime},\ell^{\prime}](\alpha^{\prime},\beta^{\prime})}=\Psi^{[k,\ell](\alpha,\beta)}
 \Gamma^{[k^{\prime},\ell^{\prime}](\alpha^{\prime},\beta^{\prime})}_{[k,\ell](\alpha,\beta)},
\end{gather*}
so that
\begin{gather*}
 \Gamma^{[k^{\prime},\ell^{\prime}](\alpha^{\prime},\beta^{\prime})}_{[k,\ell](\alpha,\beta)}
=\big(g_{-}^{[k,\ell](\alpha,\beta)}\big)\inv Q_{0}^{x_{0}}Q_{1}^{x_{1}}Q_{2}^{x_{2}} g_{-}^{[k^{\prime},\ell^{\prime}](\alpha^{\prime},\beta^{\prime})},
\end{gather*}
where
\begin{gather*}
 x_{0}=k-k^{\prime}+\alpha-\alpha^{\prime}, \qquad
 x_{1}=k^{\prime}-k+\ell-\ell^{\prime}+\beta-\beta^{\prime}, \qquad
 x_{2}=\ell^{\prime}-\ell.
\end{gather*}
The simplest connection matrices are those where $(x_{0},x_{1},x_{2})$ has two zero components and the other absolute value 1. We therefore define the elementary connection matrices
\begin{gather}
 V_{k,\ell}^{(\alpha_{+},\beta)}=\Gamma^{[k,\ell](\alpha+1,\beta)}_{[k,\ell](\alpha,\beta)}= \big(g_{-}^{[k,\ell](\alpha,\beta)}\big)\inv Q_{0}\inv g_{-}^{[k,\ell](\alpha+1,\beta)},\nonumber\\
 V_{k,\ell}^{(\alpha,\beta_{+})}=\Gamma^{[k,\ell](\alpha,\beta+1)}_{[k,\ell](\alpha,\beta)}= \big(g_{-}^{[k,\ell](\alpha,\beta)}\big)\inv Q_{1}\inv g_{-}^{[k,\ell](\alpha,\beta+1)},\nonumber\\
 W_{k,\ell}^{(\alpha_{+},\beta)}=\Gamma^{[k-1,\ell](\alpha+1,\beta)}_{[k,\ell](\alpha,\beta)}= \big(g_{-}^{[k,\ell](\alpha,\beta)}\big)\inv Q_{1}\inv g_{-}^{[k-1,\ell](\alpha+1,\beta)},\nonumber\\
 W_{k,\ell}^{(\alpha,\beta_{+})}=\Gamma^{[k,\ell-1](\alpha,\beta+1)}_{[k,\ell](\alpha,\beta)}= \big(g_{-}^{[k,\ell](\alpha,\beta)}\big)\inv Q_{2}\inv g_{-}^{[k,\ell-1](\alpha,\beta+1)}.\label{eq:67}
\end{gather}
Also define translation matrices
\begin{gather}
U^{(\alpha,\beta)}_{k_{+},\ell}=\Gamma^{[k+1,\ell](\alpha,\beta)}_{[k,\ell](\alpha,\beta)}=\big(g_{-}^{[k,\ell](\alpha,\beta)}\big)\inv T_{1} \big(g_{-}^{[k+1,\ell](\alpha,\beta)}\big),\nonumber\\
U^{(\alpha,\beta)}_{k,\ell_{+}}=\Gamma^{[k,\ell+1](\alpha,\beta)}_{[k,\ell](\alpha,\beta)}=\big(g_{-}^{[k,\ell](\alpha,\beta)}\big)\inv T_{2} \big(g_{-}^{[k,\ell+1](\alpha,\beta)}\big).\label{eq:68}
\end{gather}
Pictorially, fixing $k$ and $\alpha$, we have
\begin{equation*}
\begin{tikzpicture}[baseline= (a).base]
\node[scale=.77] (a) at (0,0){
 \begin{tikzcd}
\Psi^{[k,\ell](\alpha,\beta-1)}
\arrow{rr}{U_{k,\ell_{+}}^{(\alpha,\beta-1)}}
\ar{rd}{V_{k,\ell}^{(\alpha,\beta-1_{+})}}
&{}&
\Psi^{[k,\ell+1](\alpha,\beta-1)}
\ar{rr}{U_{k,\ell+1_+}^{(\alpha,\beta-1)}}
\ar{rd}{V_{k,\ell+1}^{(\alpha,\beta-1_{+})}}
\ar{ld}{W_{k,\ell+1}^{(\alpha,\beta-1_{+})}}&{}&\Psi^{[k,\ell+2](\alpha,\beta-1)}
\ar{dl}{W_{k,\ell+2}^{(\alpha,\beta-1_{+})}}\\
{}& \Psi^{[k,\ell](\alpha,\beta)}
 \ar{rr}[swap]{U_{k,\ell_{+}}^{(\alpha,\beta)}}
 \ar{dr}[swap]{V_{k,\ell}^{(\alpha,\beta_{+})}}
\ar{dl}[swap]{W_{k,\ell}^{(\alpha,\beta_{+})}}&{} &
\Psi^{[k,\ell+1](\alpha,\beta)}
\ar{ld}{W_{k,\ell+1}^{(\alpha,\beta_{+})}}
\ar{rd}{V_{k,\ell+1}^{(\alpha,\beta_{+})}}&{}&
 \\
\Psi^{[k,\ell-1](\alpha,\beta+1)}
\arrow{rr}[swap]{U_{k,\ell-1_{+}}^{(\alpha,\beta+1)}}
&{}&
\Psi^{[k,\ell](\alpha,\beta+1)}
\ar{rr}[swap]{U_{k,\ell_{+}}^{(\alpha,\beta+1)}}&{}&\Psi^{[k,\ell+1](\alpha,\beta+1)}
\end{tikzcd}
};
\end{tikzpicture}
\end{equation*}
Walking around the triangles in this and the similar diagram where $\ell$, $\beta$ are fixed we see that we get factorizations of the elementary translation matrices $U_{k_{+},\ell}^{(\alpha,\beta)}$, $U_{k,\ell_{+}}^{(\alpha,\beta)}$. For instance,
\begin{gather}
 U_{k,\ell_{+}}^{(\alpha,\beta)}=V_{k,\ell}^{(\alpha,\beta_{+})}\big(W_{k,\ell+1}^{(\alpha,\beta_{+})}\big)^{-1} =\big(W_{k,\ell+1}^{(\alpha,\beta-1_{+})}\big)^{-1}V_{k,\ell+1}^{(\alpha,\beta-1_{+})},\nonumber\\
 U_{k_{+},\ell}^{(\alpha,\beta)}=V_{k,\ell}^{(\alpha_{+},\beta)}\big(W_{k+1,\ell}^{(\alpha_{+},\beta)}\big)^{-1} =\big(W_{k+1,\ell}^{(\alpha-1_{+},\beta)}\big)^{-1}V_{k+1,\ell}^{(\alpha-1_{+},\beta)}.\label{eq:69}
\end{gather}
We will argue that all identities for the connection matrices are the result of those in \eqref{eq:69}.

\begin{lem}\label{connectmatrixprod} Any connection matrix $\Gamma^{[k^{\prime},\ell^{\prime}](\alpha^{\prime},\beta^{\prime})} _{[k,\ell](\alpha,\beta)}$ is a product of the four types of elementary connection matrices~\eqref{eq:67}.
\end{lem}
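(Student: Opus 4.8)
The plan is to reduce the lemma to two ingredients: a cocycle (composition) property of the connection matrices, and the observation that the four elementary steps generate the full lattice of index shifts. Abbreviate an index point by $P=[k,l](\alpha,\beta)$ and write $\Gamma^{P'}_{P}$ for the corresponding connection matrix. First I would establish the cocycle property: for any three points $P,P',P''$,
\[
\Gamma^{P''}_{P}=\Gamma^{P'}_{P}\,\Gamma^{P''}_{P'},\qquad \Gamma^{P}_{P}=1,\qquad (\Gamma^{P'}_{P})\inv=\Gamma^{P}_{P'}.
\]
This follows from the defining relation \eqref{eq:66} and the invertibility of the Baker functions: $\Psi^{P''}=\Psi^{P'}\Gamma^{P''}_{P'}=\Psi^{P}\Gamma^{P'}_{P}\Gamma^{P''}_{P'}$, which compared with $\Psi^{P''}=\Psi^{P}\Gamma^{P''}_{P}$ yields the identity after cancelling the invertible factor $\Psi^{P}$. (Equivalently it follows from the closed formula for $\Gamma$, since the projected $Q_{0},Q_{1},Q_{2}$ are diagonal and hence commute, while the exponents $x_{0},x_{1},x_{2}$ are additive under composition.)

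Next I would read the four elementary matrices of \eqref{eq:67} as translation steps on the index lattice $\mathbb{Z}^{4}$ with coordinates $(k,l,\alpha,\beta)$: the forward step of $V_{[k,l]}^{(\alpha_{+},\beta)}$ is $(0,0,1,0)$, that of $V_{[k,l]}^{(\alpha,\beta_{+})}$ is $(0,0,0,1)$, that of $W_{[k_{-},l]}^{(\alpha_{+},\beta)}$ is $(-1,0,1,0)$, and that of $W_{[k,l_{-}]}^{(\alpha,\beta_{+})}$ is $(0,-1,0,1)$. Together with their inverses these four vectors generate all of $\mathbb{Z}^{4}$, since $(0,0,1,0)-(-1,0,1,0)=(1,0,0,0)$ and $(0,0,0,1)-(0,-1,0,1)=(0,1,0,0)$, so the standard basis lies in their span.

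Given an arbitrary source $P=(k,l,\alpha,\beta)$ and target $P'=(k',l',\alpha',\beta')$, I would then choose a lattice path $P=P_{0},P_{1},\dots,P_{n}=P'$ each of whose steps is one of the four elementary steps or an inverse, which is possible by the spanning statement just proved. Telescoping the cocycle property gives
\[
\Gamma^{P'}_{P}=\prod_{i=1}^{n}\Gamma^{P_{i}}_{P_{i-1}},
\]
and each factor is, by definition, an elementary matrix \eqref{eq:67} or the inverse of one. This is exactly the asserted factorization.

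The part I expect to be the real obstacle is not this group-theoretic count but the bookkeeping of domains: each $g_{-}^{[k,l](\alpha,\beta)}$, and hence each elementary matrix, exists only when $k,l\ge0$ and the relevant $\tau$-function is nonzero. I would therefore order the path so that it never leaves the region $k,l\ge0$: first adjust $\alpha$ and $\beta$ with the two $V$-steps, which fix $(k,l)$; realize a unit increase of $k$ by the composite $(1,0,-1,0)+(0,0,1,0)$ (an inverse $W$-step followed by a $V$-step, so $k$ never drops), and a unit decrease of $k$ by $(-1,0,1,0)+(0,0,-1,0)$ (a $W$-step followed by an inverse $V$-step, applied only while $k\ge1$); handle $l$ symmetrically with the $\beta$-labelled steps. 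Since both endpoints already lie in the admissible region, such a path exists, and the genericity (non-vanishing) assumption on the $\tau$-functions guarantees that every intermediate elementary matrix is defined.
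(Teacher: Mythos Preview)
Your proposal is correct and follows essentially the same approach as the paper: both arguments reduce to the observation that the four elementary steps (together with their inverses) generate the full index lattice, combined with the cocycle identity for the $\Gamma$'s. The paper packages the argument slightly differently, first noting via \eqref{eq:69} that the translation matrices $U_{[k_{+},l]}^{(\alpha,\beta)}$ and $U_{[k,l_{+}]}^{(\alpha,\beta)}$ are products of a $V$ and a $W\inv$, then using the $U$'s to adjust $(k,l)$ and the $V$'s to adjust $(\alpha,\beta)$; this is exactly your observation that $(1,0,0,0)$ and $(0,1,0,0)$ lie in the span of the elementary steps. Your treatment of the domain constraint $k,l\ge0$ is more careful than the paper's, which simply works under the standing assumption that all relevant Birkhoff factorizations exist and does not comment on choosing a path that stays in the admissible region.
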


\begin{proof} We need to show that we can move from $\Psi^{[k,\ell](\alpha,\beta)}$ to $\Psi^{[k^{\prime},\ell^{\prime}](\alpha^{\prime},\beta^{\prime})}$ just using the elementary connection matrices. First of all, the translation matrices \eqref{eq:68} are products of elementary connection matrices, see~\eqref{eq:69}. We can move from $\Psi^{[k,\ell](\alpha,\beta)}$ to $\Psi^{[k^{\prime},\ell^{\prime}](\alpha,\beta)}$ using just $U_{{k_{i}}_{+},\ell_{i}}$ and/or $U_{k_{i},{\ell_{i}}_{+}}$, keeping $(\alpha,\beta)$ fixed. Then we use the $V^{(\alpha_{+},\beta)}$ and/or $V^{(\alpha,\beta_{+})}$ to adjust the $(\alpha,\beta)$ to $(\alpha^{\prime},\beta^{\prime})$ to reach $\Psi^{[k^{\prime},\ell^{\prime}](\alpha^{\prime},\beta^{\prime})}$.
\end{proof}

The expression in Lemma \ref{connectmatrixprod}, $\Gamma=\Gamma^{[k^{\prime},\ell^{\prime}](\alpha^{\prime},\beta^{\prime})}_{[k,\ell](\alpha,\beta)}$ as a product of elementary connection matrices is of course not unique. Each path from $\Psi^{[k,\ell](\alpha,\beta)}$ to $\Psi^{[k^{\prime},\ell^{\prime}](\alpha^{\prime},\beta^{\prime})}$ in the lattice of Baker functions with diagonal or anti-diagonal steps gives a product expression for $\Gamma$: the diagonal steps give~$V$ factors and the anti-diagonal steps give~$W$ factors. Now it should be clear that any two paths from $\Psi^{[k,\ell](\alpha,\beta)}$ to $\Psi^{[k^{\prime},\ell^{\prime}](\alpha^{\prime},\beta^{\prime})}$ can be deformed into each other by moves
\begin{equation}
\begin{tikzcd}
 {}&\bullet&{}&{}&{}&{}&\bullet&{}\\
\bullet \ar{ru}{}&{}&\bullet&{}\ar[Leftrightarrow]{r}&{}&\bullet&{}&\bullet\ar{lu}
\\
 {}&\bullet\ar{lu}&{}&{}&{}&{}&\bullet\ar{ru}&{}
\end{tikzcd}\label{eq:70}
\end{equation}
or
\begin{equation}
\begin{tikzcd}
 {}&\bullet
\ar{rd}&{}&{}&{}&{}&\bullet&{}\\
 \bullet\ar{ru}{}&{}&\bullet&\ar[Leftrightarrow]{r}&{}&\bullet\ar{rd}{}&{}&\bullet\\
{}&\bullet&{}&{}&{}&{}&\bullet\ar{ru}&{}
\end{tikzcd}.\label{eq:71}
\end{equation}
The moves \eqref{eq:71} correspond to identities \eqref{eq:69} and moves \eqref{eq:70} correspond to similar equations of the form $V W=W V$ (without inverses on~$W$). These last equations will be equivalent to those in~\eqref{eq:69}. So the upshot is that all equations obtained by writing an arbitrary connection matrix $\Gamma$ as a~product of elementary connection matrices follow from~\eqref{eq:69}.

We will therefore concentrate on \eqref{eq:69}. In particular, we will see that these equations will imply the equations for our $\tau$-functions~(\ref{eq:48}).

We first check that our elementary connection matrices and translation matrices do not contain any negative powers of~$z$, which is not obvious from the definitions~\eqref{eq:67} and~\eqref{eq:68}.

The following lemma tells us that the elementary connection matrices and translation matrices contain only $z^{k}$ for $k\ge0$.
\begin{lem}\label{lem:8}For the elementary connection matrices, we have the positive expressions
 \begin{gather}
 V_{k,\ell}^{(\alpha_{+},\beta)}= g^{[k,\ell](\alpha,\beta)}_{0+}Q_{0}\inv\big(g^{[k,\ell](\alpha+1,\beta)}_{0+}\big)\inv,\nonumber\\
 V_{k,\ell}^{(\alpha,\beta_{+})}= g^{[k,\ell](\alpha,\beta)}_{0+}Q_{1}\inv\big(g^{[k,\ell](\alpha,\beta+1)}_{0+}\big)\inv,\nonumber\\
 W_{k,\ell}^{(\alpha_{+},\beta)}=g^{[k,\ell](\alpha,\beta)}_{0+}Q_{0}\inv\big(g^{[k-1,\ell](\alpha+1,\beta)}_{0+}\big)\inv ,\nonumber\\
 W_{k,\ell}^{(\alpha,\beta_{+})}=g^{[k,\ell](\alpha,\beta)}_{0+}Q_{1}\inv\big(g^{[k,\ell-1](\alpha,\beta+1)}_{0+}\big)\inv.\label{eq:72}
\end{gather}
Similarly, for the translation matrices
\begin{gather*}
 U^{(\alpha,\beta)}_{k_{+},\ell}=g_{0+}^{[k+1,\ell](\alpha,\beta)}\big(g_{0+}^{[k,\ell](\alpha,\beta)}\big)\inv,\qquad
 U^{(\alpha,\beta)}_{k,\ell_{+}}=g_{0+}^{[k,\ell+1](\alpha,\beta)}\big(g_{0+}^{[k,\ell](\alpha,\beta)}\big)\inv.
\end{gather*}
\end{lem}

\begin{proof} From \eqref{eq:57} it follows that
\begin{gather*}
 Q_{0}\inv g^{[k,\ell](\alpha+1,\beta)}_{-}g^{[k,\ell](\alpha+1,\beta)}_{0+}=g^{[k,\ell](\alpha,\beta)}_{-}g^{[k,\ell](\alpha,\beta)}_{0+}Q_{0}\inv,\\
 Q_{1}\inv g^{[k,\ell](\alpha,\beta+1)}_{-}g^{[k,\ell](\alpha,\beta+1)}_{0+}=g^{[k,\ell](\alpha,\beta)}_{-}g^{[k,\ell](\alpha,\beta)}_{0+}Q_{1}\inv,
\end{gather*}
from which the result for the $V$ matrices follows by rearranging factors.

Similarly, using $Q_{1}\inv T_{1}=Q_{0}\inv, Q_{2}\inv T_{2}=Q_{1}\inv$ and again \eqref{eq:57} we find
\begin{gather*}
 Q_{1}\inv g^{[k-1,\ell](\alpha+1,\beta)}_{-}g^{[k-1,\ell](\alpha+1,\beta)}_{0+} =g^{[k,\ell](\alpha,\beta)}_{-}g^{[k,\ell](\alpha,\beta)}_{0+}Q_{0}\inv,\\
 Q_{2}\inv g^{[k,\ell-1](\alpha,\beta+1)}_{-}g^{[k,\ell-1](\alpha,\beta+1)}_{0+}=g^{[k,\ell](\alpha,\beta)}_{-}g^{[k,\ell](\alpha,\beta)}_{0+}Q_{1}\inv,
\end{gather*}
giving the result for the $W$ matrices.

Finally the positive expression for $U$ matrices follows from
 \begin{gather*}
 g^{[k+1,\ell](\alpha,\beta)}_{-}g^{[k+1,\ell](\alpha,\beta)}_{0+}=T_1\inv g^{[k,\ell](\alpha,\beta)}_{-}g^{[k,\ell](\alpha,\beta)}_{0+},\\
 g^{[k,\ell+1](\alpha,\beta)}_{-}g^{[k,\ell+1](\alpha,\beta)}_{0+}=T_2\inv g^{[k,\ell](\alpha,\beta)}_{-}g^{[k,\ell](\alpha,\beta)}_{0+},
\end{gather*}
and rearranging factors.
\end{proof}

As in the simpler, $2\times2$ case, this lemma allows us to calculate the connection matrices easily in terms of the $\tau$-functions.

First note that, similarly to the $2\times 2$ case, Theorem \ref{Thm:birkh-3x3fact-tau} allows us to expand $g_{-}^{[k,\ell](\alpha,\beta)}$ and its inverse up to order $z\inv$ as (we suppress the shift $(\alpha,\beta)$)
\begin{gather}\label{eq:65}
g_{-}^{[k,\ell]}=
\begin{bmatrix}
 1+O\big(z^{-1}\big) & \dfrac{(-1)^\ell}{zh_{\underline{k-1},\ell}}+O\big(z^{-2}\big) &
 \dfrac{(-1)^{\ell}}{zh_{\underline{k-1},\underline{\ell-1}}}+O\big(z^{-2}\big)\vspace{1mm}\\
 \dfrac{(-1)^\ell h_{\underline{k},\ell}}{z}+O\big(z^{-2}\big) &
 1+O\big(z^{-1}\big) &
\dfrac{1}{zh_{k,\underline{\ell-1}}}+O\big(z^{-2}\big)\vspace{1mm}\\
 \dfrac{ (-1)^{\ell+1}h_{\underline{k},\underline {\ell}}}{z}+O\big(z^{-2}\big) &
 \dfrac{h_{k,\underline{\ell}}}{z}+O\big(z^{-2}\big) & 1+O\big(z^{-1}\big)
\end{bmatrix},
\end{gather}
where $O(z^i)$ are terms with power of $z$ equal to $i$ or lower, and we define quotients of $\tau$-functions as
\begin{gather*}
h_{\underline{k},\ell}=\frac{\tau_{k+1,\ell}}{\tau_{k,\ell}},\qquad
h_{{k},\underline{\ell}}=\frac{\tau_{k,\ell+1}}{\tau_{k,\ell}},\qquad
h_{\underline{k},\underline{\ell}}=\frac{\tau_{k+1,\ell+1}}{\tau_{k,\ell}}.
\end{gather*}
This formula then gives the following formula for $\big(g_{-}^{[k,\ell]}\big)^{-1}$
\begin{gather}\label{eq:75}
\big(g_{-}^{[k,\ell]}\big)^{-1}=
\begin{bmatrix}
 1+O\big(z^{-1}\big) &
 \dfrac{(-1)^{\ell+1}}{zh_{\underline{k-1},\ell}}+O\big(z^{-2}\big)
& \dfrac{(-1)^{\ell+1} }{zh_{\underline{k-1},\underline{\ell-1}}}+O\big(z^{-2}\big)\vspace{1mm}\\
\dfrac{(-1)^{\ell+1}h_{\underline{k},\ell}}{z}+O\big(z^{-2}\big) &
 1+O\big(z^{-1}\big) & \dfrac{-1}{zh_{k,\underline{\ell-1}}}+O\big(z^{-2}\big)\vspace{1mm}\\
\dfrac{ (-1)^{\ell}h_{\underline{k},\underline{\ell}}}{z}+O\big(z^{-2}\big) &
\dfrac{ -h_{k,\underline{\ell}}}{z}+O\big(z^{-2}\big) & 1+O\big(z^{-1}\big)
\end{bmatrix}.
\end{gather}

\subsection{Explicit formulae for connection matrices}\label{sec:expl-form-conn}

\begin{lem}\label{lem:9}
 \begin{gather*}V^{(\alpha_{+},\beta)}_{k,\ell}=
 \begin{bmatrix}
 z+A &
 \dfrac{(-1)^\ell}{h_{\underline{k-1},{\ell}}^{(\alpha+1,\beta)}}
& \dfrac{(-1)^{\ell}}{h_{\underline{k-1},\underline{\ell-1}}^{(\alpha+1,\beta)}}\vspace{2mm}\\
 (-1)^{\ell+1} h_{\underline{k},\ell}^{(\alpha,\beta)} & -1 & 0\vspace{2mm}\\
 (-1)^{\ell}h_{\underline{k},\underline{\ell}}^{(\alpha,\beta)} & 0 & -1
 \end{bmatrix},\qquad A=\frac{h_{\underline{k},\ell}^{(\alpha,\beta)}}
 {h_{\underline{k-1},\ell}^{(\alpha+1,\beta)}}
 -\frac{h_{\underline{k},\underline{\ell}}^{(\alpha,\beta)}}
 {h_{\underline{k-1},\underline{\ell-1}}^{(\alpha+1,\beta)}},\\
V^{(\alpha,\beta_{+})}_{k,\ell}=
\begin{bmatrix}
 -1& \dfrac{(-1)^{\ell+1}}{h^{(\alpha,\beta)}_{\underline{k-1},\ell}}&0\vspace{2mm}\\
 (-1)^\ell h^{(\alpha,\beta+1)}_{\underline{k},\ell}&z+B&\dfrac{1}{h^{(\alpha,\beta+1)}_{k,\underline{\ell-1}}}\vspace{2mm}\\
 0&-h_{k,\underline{\ell}}^{(\alpha,\beta)}&-1
\end{bmatrix},\qquad B=\frac{h_{{k},\underline{\ell}}^{(\alpha,\beta)}}{h_{k,\underline{\ell-1}}^{(\alpha,\beta+1)}}+
\frac{h_{\underline{k},\ell}^{(\alpha,\beta+1)}}{h_{\underline{k-1},\ell}^{(\alpha,\beta)}},\\
W^{(\alpha_{+},\beta)}_{k,\ell}=
\begin{bmatrix}
 -1&\dfrac{(-1)^{\ell+1}}{h_{\underline{k-1},\ell}^{(\alpha,\beta)}}&0\vspace{2mm}\\
 (-1)^\ell h_{\underline{k-1},\ell}^{(\alpha+1,\beta)}&z+C&\dfrac{1} {h_{k-1,\underline{\ell-1}}^{(\alpha+1,\beta)}}\vspace{2mm}\\
0&-h_{k,\underline{\ell}}^{(\alpha,\beta)}&-1
\end{bmatrix},\qquad C=\frac{h_{k,\underline{\ell}}^{(\alpha,\beta)}}{h_{k-1,\underline{\ell-1}}^{(\alpha+1,\beta)}}
+\frac{h_{\underline{k-1},\ell}^{(\alpha+1,\beta)}}{h_{\underline{k-1},\ell}^{(\alpha,\beta)}},\\
W^{(\alpha,\beta_{+})}_{k,\ell}=
\begin{bmatrix}
 -1&0&\dfrac{(-1)^{\ell+1}}{h_{\underline{k-1},\underline{\ell-1}}^{(\alpha,\beta)}}\vspace{2mm}\\
 0&-1&\dfrac{-1}{h_{k,\underline{\ell-1}}^{(\alpha,\beta)}}\vspace{2mm}\\
 (-1)^{\ell}h_{\underline{k},\underline{\ell-1}}^{(\alpha,\beta+1)}& h_{k,\underline{\ell-1}}^{(\alpha,\beta+1)}&
 z+D
\end{bmatrix},\qquad D=\frac{h_{\underline{k},\underline{\ell-1}}^{(\alpha,\beta+1)}}
 {h_{\underline{k-1},\underline{\ell-1}}^{(\alpha,\beta)}}
 +\frac{h_{k,\underline{\ell-1}}^{(\alpha,\beta+1)}}
 {h_{k,\underline{\ell-1}}^{(\alpha,\beta)}}.
 \end{gather*}
\end{lem}

\begin{proof}For the off-diagonal entries, use the expressions \eqref{eq:65} and~\eqref{eq:75} for $g_{-}$ and $(g_{-})\inv$ in the definitions~\eqref{eq:72} of the elementary connection matrices, and then use the positivity, Lemma~\ref{lem:8}, to discard all terms with $z\inv$ or lower. For the diagonal terms, use that the determinant of all elementary connection matrices is $z$. We comment that the inverses of $W_{k,\ell}^{(\alpha_+,\beta)}$ and $W_{k,\ell}^{(\alpha,\beta_+)}$ can now easily be calculated.
\end{proof}

For the translation matrices, we obtain the following expressions
\begin{gather*}
U_{k_{+},\ell}^{(\alpha,\beta)}=V_{k,\ell}^{(\alpha_+,\beta)}\big(W_{k+1,\ell}^{(\alpha_+,\beta)}\big)^{-1}\\
\hphantom{U_{k_{+},\ell}^{(\alpha,\beta)}}{} =\begin{bmatrix}
 -z-\dfrac{h_{\underline{k},\ell}^{(\alpha+1,\beta)}}{h_{\underline{k},\ell}^{(\alpha,\beta)}}-\dfrac{h_{\underline{k},\ell}^{(\alpha,\beta)}}{h_{\underline{k-1},\ell}^{(\alpha+1,\beta)}} +\dfrac{h_{\underline{k},\underline{\ell}}^{(\alpha,\beta)}}{h_{\underline{k-1},\underline{\ell-1}}^{(\alpha+1,\beta)}}& \dfrac{(-1)^{\ell+1}}{h_{\underline{k},\ell}^{(\alpha,\beta)}} & \dfrac{(-1)^{\ell+1}}{h_{\underline{k},{\ell}}^{(\alpha,\beta)}h_{k,\underline{\ell-1}}^{(\alpha+1,\beta)}}+\frac{(-1)^{\ell+1}}{h_{\underline{k-1},\underline{\ell-1}}^{(\alpha+1,\beta)}}\vspace{2mm}\\
 (-1)^{\ell}h_{\underline{k},\ell}^{(\alpha,\beta)}& 0 & 0\vspace{2mm}\\
(-1)^{\ell+1}h_{\underline{k},\underline{\ell}}^{(\alpha,\beta)}& 0 & 1
\end{bmatrix}. \end{gather*}
Equivalently,
\begin{gather*}
 U_{k_{+},\ell}^{(\alpha,\beta)}=\big(W_{k+1,\ell}^{(\alpha-1_+,\beta)}\big)^{-1}V_{k+1,\ell}^{(\alpha-1_+,\beta)} \nonumber\\
 \hphantom{U_{k_{+},\ell}^{(\alpha,\beta)}}{} =
\begin{bmatrix}
 -z-\dfrac{h_{\underline{k+1},\ell}^{(\alpha-1,\beta)}}{h_{\underline{k},\ell}^{(\alpha,\beta)}}-\dfrac{h_{\underline{k},\ell}^{(\alpha,\beta)}}{h_{\underline{k},\ell}^{(\alpha-1,\beta)}} +\dfrac{h_{\underline{k+1},\underline{\ell}}^{(\alpha-1,\beta)}}{h_{\underline{k},\underline{\ell-1}}^{(\alpha,\beta)}}& \dfrac{(-1)^{\ell+1}}{h_{\underline{k},\ell}^{(\alpha,\beta)}}& \dfrac{(-1)^{\ell+1}}{h_{\underline{k},\underline{\ell-1}}^{(\alpha,\beta)}}\vspace{2mm}\\
 (-1)^{\ell}h_{\underline{k},\ell}^{(\alpha,\beta)}& 0 &0\vspace{2mm}\\
 (-1)^{\ell+1}h_{\underline{k},{\ell}}^{(\alpha,\beta)}h_{{k+1},{\underline{\ell}}}^{(\alpha-1,\beta)}+(-1)^{\ell+1}h_{\underline{k+1},{\underline{\ell}}}^{(\alpha-1,\beta)}& 0 & 1
\end{bmatrix},
\\
U_{k,\ell_{+}}^{(\alpha,\beta)}=V_{k,\ell}^{(\alpha,\beta_+)}\big(W_{k,\ell+1}^{(\alpha,\beta_+)}\big)^{-1} \nonumber\\
 {}= \begin{bmatrix}
 1& \dfrac{(-1)^{\ell}}{h_{\underline{k-1},\ell}^{(\alpha,\beta)}}& 0\vspace{2mm}\\
 (-1)^{\ell+1}h_{\underline{k},\ell}^{(\alpha,\beta+1)}+(-1)^{\ell}\dfrac{h_{\underline{k},\underline{\ell}}^{(\alpha,\beta+1)}}{h_{k,\underline{\ell}}^{(\alpha,\beta)}}& -z-\dfrac{h_{k,\underline{\ell}}^{(\alpha,\beta)}}{h_{k,\underline{\ell-1}}^{(\alpha,\beta+1)}}-\dfrac{h_{\underline{k},\ell}^{(\alpha,\beta+1)}}{h_{\underline{k-1},\ell}^{(\alpha,\beta)}} -\dfrac{h_{k,\underline{\ell}}^{(\alpha,\beta+1)}}{h_{k,\underline{\ell}}^{(\alpha,\beta)}} & -\dfrac{1}{h_{k,\underline{\ell}}^{(\alpha,\beta)}}\vspace{2mm}\\
 0& h_{k,\underline{\ell}}^{(\alpha,\beta)} & 0
\end{bmatrix}.\nonumber \end{gather*}
Equivalently,
 \begin{gather*}
 U_{k,\ell_{+}}^{(\alpha,\beta)}=\big(W_{k,\ell+1}^{(\alpha,\beta-1_+)}\big)^{-1}V_{k,\ell+1}^{(\alpha,\beta-1_+)} \\
 \hphantom{U_{k,\ell_{+}}^{(\alpha,\beta)}}{} =\begin{bmatrix}
 1& \dfrac{(-1)^{\ell+1}}{h_{\underline{k-1},\ell+1}^{(\alpha,\beta-1)}}+(-1)^\ell\dfrac{h_{k,\underline{\ell}}^{(\alpha,\beta)}}{h_{\underline{k-1},\underline{\ell}}^{(\alpha,\beta-1)}}& 0\vspace{2mm}\\
 (-1)^{\ell}h_{\underline{k},\ell+1}^{(\alpha,\beta)}& -z-\dfrac{h_{k,\underline{\ell+1}}^{(\alpha,\beta-1)}} {h_{k,\underline{\ell}}^{(\alpha,\beta)}}-\dfrac{h_{\underline{k},\ell+1}^{(\alpha,\beta)}}{h_{\underline{k-1},\ell+1}^{(\alpha,\beta-1)}}-\dfrac{h_{k,\underline{\ell}}^{(\alpha,\beta)}} {h_{{k},\underline{\ell}}^{(\alpha,\beta-1)}} & -\dfrac{1}{h_{k,\underline{\ell}}^{(\alpha,\beta)}}\vspace{2mm}\\
 0& h_{k,\underline{\ell}}^{(\alpha,\beta)} & 0
\end{bmatrix}. \end{gather*}

\subsection{Difference equations from factorizations}\label{sec:diff-equat-frpm}
\begin{thm}\label{thm:3by3Qsystem} The $\tau$-functions defined by \eqref{eq:62} satisfy the following system of four equations, referred to as the $3Q$-system. For all $\alpha,\beta \in{\mathbb Z}$ and $k,\ell \geq 0$
\begin{gather*}
 \tag{\texttt{[0]}} \big(\tau_{k,\ell}^{(\alpha+1,\beta)}\big)^2=\tau_{k,\ell}^{(\alpha,\beta)}\tau_{k,\ell}^{(\alpha+2,\beta)}+\tau_{k+1,\ell+1}^{(\alpha,\beta)} \tau_{k-1,\ell-1}^{(\alpha+2,\beta)}-\tau_{k+1,\ell}^{(\alpha,\beta)}\tau_{k-1,\ell}^{(\alpha+2,\beta)},\\
 \tag{\texttt{[2]}}\tau_{k-1,\ell-1}^{(\alpha+2,\beta)}\tau_{k+1,\ell}^{(\alpha+1,\beta)}+\tau_{k,\ell}^{(\alpha+1,\beta)}\tau_{k,\ell-1}^{(\alpha+2,\beta)}=\tau_{k,\ell-1}^{(\alpha+1,\beta)} \tau_{k,\ell}^{(\alpha+2,\beta)},\\
 \tag{\texttt{[1,1]}} \big(\tau_{k,\ell}^{(\alpha,\beta+1)}\big)^2=\tau_{k,\ell}^{(\alpha,\beta)}\tau_{k,\ell}^{(\alpha,\beta+2)}-\tau_{k,\ell-1}^{(\alpha,\beta+2)} \tau_{k,\ell+1}^{(\alpha,\beta)}-\tau_{k+1,\ell}^{(\alpha,\beta+2)}\tau_{k-1,\ell}^{(\alpha,\beta)},\\
 \tag{\texttt{[1]}} \tau_{k-1,\ell+1}^{(\alpha,\beta)}\tau_{k,\ell}^{(\alpha,\beta+1)}+\tau_{k-1,\ell}^{(\alpha,\beta+1)}\tau_{k,\ell+1}^{(\alpha,\beta)}=\tau_{k-1,\ell}^{(\alpha,\beta)}\tau_{k,\ell+1}^{(\alpha,\beta+1)}.
\end{gather*}
\end{thm}

\begin{proof}We substitute into the two factorizations \eqref{eq:69}, the results of Lemma~\ref{lem:9}. We find rather complicated rational expressions in the $\tau$-functions. From the two different expressions for $U_{k_{+},\ell}^{(\alpha,\beta)},$ we obtain
 \begin{gather} \label{hkeq1}
 -\frac{h_{\underline{k},\ell}^{(\alpha+1,\beta)}}{h_{\underline{k},\ell}^{(\alpha,\beta)}}-\frac{h_{\underline{k},\ell}^{(\alpha,\beta)}}{h_{\underline{k-1},\ell}^{(\alpha+1,\beta)}} +\frac{h_{\underline{k},\underline{\ell}}^{(\alpha,\beta)}}{h_{\underline{k-1},\underline{\ell-1}}^{(\alpha+1,\beta)}} =-\frac{h_{\underline{k+1},\ell}^{(\alpha-1,\beta)}}{h_{\underline{k},\ell}^{(\alpha,\beta)}}-\frac{h_{\underline{k},\ell}^{(\alpha,\beta)}}{h_{\underline{k},\ell}^{(\alpha-1,\beta)}} +\frac{h_{\underline{k+1},\underline{\ell}}^{(\alpha-1,\beta)}}{h_{\underline{k},\underline{\ell-1}}^{(\alpha,\beta)}},\\
 \label{hkeq2}\frac{(-1)^{\ell+1}}{h_{\underline{k},{\ell}}^{(\alpha,\beta)}h_{k,\underline{\ell-1}}^{(\alpha+1,\beta)}}+\frac{(-1)^{\ell+1}}{h_{\underline{k-1},\underline{\ell-1}}^{(\alpha+1,\beta)}} =\frac{(-1)^{\ell+1}}{h_{\underline{k},\underline{\ell-1}}^{(\alpha,\beta)}},
 \end{gather}
 and
 \begin{gather} \label{hkeq3}
 (-1)^{\ell+1}h_{\underline{k},\underline{\ell}}^{(\alpha,\beta)}=(-1)^{\ell+1}h_{\underline{k},{\ell}}^{(\alpha,\beta)}h_{{k+1}, {\underline{\ell}}}^{(\alpha-1,\beta)}+(-1)^{\ell+1}h_{\underline{k+1},{\underline{\ell}}}^{(\alpha-1,\beta)}.
 \end{gather}
Expressing equations (\ref{hkeq1}), (\ref{hkeq2}), (\ref{hkeq3}) in terms of $\tau$-functions, we get nontrivial equations in 3 components:
\begin{gather}
 -\frac{\tau_{k + 2, \ell + 1}^{ (\alpha - 1,\beta)} \tau_{k,\ell-1}^{(\alpha,\beta)}}
 {\tau_{k + 1, \ell}^{(\alpha - 1,\beta)} \tau_{k + 1, \ell}^{ (\alpha,\beta)}} +
 \frac{\tau_{k + 1, \ell + 1}^{(\alpha,\beta)} \tau_{k - 1, \ell- 1}^{(\alpha + 1,\beta)}}
 {\tau_{k, \ell}^{(\alpha + 1,\beta)} \tau_{k, \ell}^{(\alpha,\beta)}}-
 \frac{\tau_{k + 1, \ell}^{(\alpha,\beta)} \tau_{k - 1,\ell}^{(\alpha+ 1,\beta)}}
 {\tau_{k, \ell}^{(\alpha + 1,\beta)}\tau_{k, \ell}^{(\alpha,\beta)}} \notag\\
\tag{\texttt{[0,0]}}\qquad{} +
 \frac{\tau_{k + 1,\ell}^{(\alpha,\beta)} \tau_{k, \ell}^{(\alpha - 1,\beta)}}
 {\tau_{k + 1, \ell}^{(\alpha - 1,\beta)} \tau_{k, \ell}^{(\alpha,\beta)}} +
 \frac{\tau_{k + 2, \ell}^{(\alpha - 1,\beta)} \tau_{k, \ell}^{(\alpha,\beta)}}
 {\tau_{k + 1, \ell}^{(\alpha - 1,\beta)} \tau_{k + 1,\ell}^{(\alpha,\beta)}} -
 \frac{\tau_{k + 1, \ell}^{(\alpha + 1,\beta)} \tau_{k, \ell}^{(\alpha,\beta)}}
 {\tau_{k + 1, \ell}^{(\alpha,\beta)} \tau_{k, \ell}^{(\alpha + 1,\beta)}}=0,\\
 \tag{\texttt{[0,2]}}
 -\frac{ \tau_{k, \ell - 1}^{(\alpha,\beta)}} {\tau_{k + 1, \ell}^{(\alpha,\beta)}} +
 \frac{\tau_{k - 1, \ell - 1}^{(\alpha + 1,\beta)}}{\tau_{k, \ell}^{(\alpha + 1,\beta)}} +
 \frac{ \tau_{k, \ell - 1}^{ (\alpha + 1,\beta)} \tau_{k, \ell}^{ (\alpha,\beta)}}
 {\tau_{k + 1, \ell}^{ (\alpha,\beta)} \tau_{k, \ell}^{ (\alpha + 1,\beta)}}=0,\\
 \tag{\texttt{[2,0]}}
 \frac{ \tau_{k + 2, \ell + 1}^{ (\alpha - 1,\beta)}} {\tau_{k + 1, \ell}^{ (\alpha - 1,\beta)}} - \frac{\tau_{k + 1, \ell + 1}^{ (\alpha,\beta)}}
 {\tau_{k, \ell}^{ (\alpha,\beta)}} +\frac{\tau_{k + 1, \ell + 1}^{(\alpha - 1,\beta)}\tau_{k + 1, \ell}^{ (\alpha,\beta)}}{\tau_{k + 1, \ell}^{(\alpha - 1,\beta)} \tau_{k, \ell}^{(\alpha,\beta)}}=0.
 \end{gather}
 Following the same procedure, but instead using the two different factorizations of $U_{k,\ell_{+}}^{(\alpha,\beta)},$ we get the following relations satisfied by the $\tau$-functions:
\begin{gather}
 \tag{\texttt{[1,0]}}
 \frac{ \tau_{k + 1, \ell + 1}^{(\alpha,\beta + 1)} \tau_{k, \ell}^{(\alpha,\beta)}} {\tau_{k, \ell + 1}^{(\alpha,\beta)} \tau_{k, \ell}^{(\alpha,\beta + 1)}} -
\frac{\tau_{k + 1, \ell +1}^{(\alpha,\beta)}}{\tau_{k, \ell + 1}^{(\alpha,\beta)}} - \frac{\tau_{k +1, \ell}^{(\alpha,\beta + 1)}}{\tau_{k, \ell}^{(\alpha,\beta + 1)}}=0,\\
 \tag{\texttt{[0,1]}}
\frac{ \tau_{k - 1, \ell}^{(\alpha,\beta - 1)} \tau_{k, \ell + 1}^{(\alpha,\beta)}}{\tau_{k, \ell + 1}^{(\alpha,\beta - 1)} \tau_{k, \ell}^{(\alpha,\beta)}} -
\frac{\tau_{k - 1, \ell +1}^{(\alpha,\beta - 1)}}{\tau_{k, \ell + 1}^{(\alpha,\beta - 1)}} -\frac{\tau_{k - 1, \ell}^{(\alpha,\beta)}}{\tau_{k, \ell}^{(\alpha,\beta)}}=0,\\
\frac{\tau_{k + 1, \ell + 1}^{(\alpha,\beta)}\tau_{k - 1, \ell + 1}^{(\alpha,\beta -1)}}{\tau_{k, \ell + 1}^{(\alpha,\beta - 1)}
\tau_{k, \ell + 1}^{(\alpha,\beta)}} - \frac{\tau_{k + 1, \ell}^{(\alpha,\beta + 1)}\tau_{k - 1,\ell}^{(\alpha,\beta)}}{\tau_{k, \ell}^{(\alpha,\beta + 1)}
 \tau_{k, \ell}^{(\alpha,\beta)}} - \frac{\tau_{k, \ell + 1}^{(\alpha,\beta)} \tau_{k, \ell - 1}^{(\alpha,\beta + 1)}}{\tau_{k, \ell}^{(\alpha,\beta + 1)}\tau_{k, \ell}^{(\alpha,\beta)}}\notag \\
\qquad{} +\frac{\tau_{k, \ell + 1}^{(\alpha,\beta)}\tau_{k, \ell}^{(\alpha,\beta -1)}}{\tau_{k, \ell + 1}^{(\alpha,\beta - 1)}\tau_{k, \ell}^{(\alpha,\beta)}} +
\frac{\tau_{k, \ell + 2}^{(\alpha,\beta - 1)} \tau_{k, \ell}^{(\alpha,\beta)}}{\tau_{k, \ell + 1}^{(\alpha,\beta - 1)}
\tau_{k, \ell + 1}^{(\alpha,\beta)}} - \frac{\tau_{k, \ell + 1}^{(\alpha,\beta + 1)} \tau_{k, \ell}^{(\alpha,\beta)}}{\tau_{k, \ell + 1}^{(\alpha,\beta)} \tau_{k, \ell}^{(\alpha,\beta +1)}}=0. \tag{\texttt{[1,2]}}
\end{gather}
The two equations \texttt{[0,2]} and \texttt{[2,0]} can be brought under common denominator, and then the numerator is equal to $0$. (Here, we think of the coefficients $c_{i}$, $d_{i}$, $e_{i}$ as formal variables, so the denominators are not $0$.) The resulting equations are the same, up to a shift in the indices. This gives the equation \texttt{[2]} in the theorem. In the same way, \texttt{[1]} follows from \texttt{[0,1]}, \texttt{[1,0]}.

Next we bring \texttt{[0,0]} under common denominator. The vanishing of the numerator gives
\begin{gather}
 \tau_{k + 1, \ell + 1}^{(\alpha,\beta)} \tau_{k + 1, \ell}^{(\alpha - 1,\beta)} \tau_{k + 1, \ell}^{(\alpha,\beta)} \tau_{k - 1, \ell - 1}^{(\alpha + 1,\beta)} -
 \tau_{k + 2, \ell + 1}^{(\alpha - 1,\beta)} \tau_{k, \ell - 1}^{(\alpha,\beta)} \tau_{k, \ell}^{(\alpha + 1,\beta)} \tau_{k, \ell}^{(\alpha,\beta)} \nonumber\\
 \qquad{}+ \big( \tau_{k + 2, \ell}^{(\alpha - 1,\beta)} \tau_{k, \ell}^{(\alpha + 1,\beta)}-\tau_{k + 1, \ell}^{(\alpha + 1,\beta)} \tau_{k + 1, \ell}^{(\alpha - 1,\beta)}\big)
 (\tau_{k, \ell}^{(\alpha,\beta)})^{2} \nonumber\\
\qquad{}- \big(\tau_{k + 1, \ell}^{(\alpha - 1,\beta)} \tau_{k - 1, \ell}^{(\alpha + 1,\beta)} - \tau_{k, \ell}^{(\alpha + 1,\beta)} \tau_{k, \ell}^{(\alpha - 1,\beta)}\big)
 (\tau_{k + 1, \ell}^{(\alpha,\beta)})^{2}=0.\label{eq:78}
\end{gather}
In the first term we substitute
\begin{gather*}
\tau_{k+1, \ell+1}^{( \alpha ,\beta)}\tau_{k+1, \ell }^{( \alpha-1,\beta)}= \tau_{k+1, \ell}^{ (\alpha,\beta)} \tau_{k+1, \ell+1}^{ (\alpha-1,\beta)}+\tau_{k + 2, \ell+1}^{(\alpha-1,\beta)}\tau_{k, \ell}^{(\alpha,\beta)},
\end{gather*}
which follows from $\texttt{[2]}$ by the change of variables, $k\mapsto k+1$, $\ell\mapsto \ell+1$, $\alpha\mapsto \alpha-2$. The first term then becomes
\begin{gather*}
\tau_{k + 1, \ell}^{(\alpha,\beta)} \tau_{k - 1, \ell - 1}^{(\alpha + 1,\beta)}\big(\tau_{k+1, \ell}^{ (\alpha,\beta)} \tau_{k+1, \ell+1}^{ (\alpha-1,\beta)}+\tau_{k + 2, \ell+1}^{(\alpha-1,\beta)}\tau_{k, \ell}^{(\alpha,\beta)}\big).
\end{gather*}
In the second term of~\eqref{eq:78}, we use \texttt{[2]} in the form
\begin{gather*}
\tau_{k, \ell - 1}^{( \alpha,\beta)}\tau_{k, \ell}^{( \alpha + 1,\beta)}= \tau_{k, \ell - 1}^{ (\alpha + 1,\beta)} \tau_{k, \ell}^{ (\alpha,\beta)}+
\tau_{k + 1, \ell}^{(\alpha,\beta)}\tau_{k - 1, \ell - 1}^{(\alpha + 1,\beta)},
\end{gather*}
so that the second term becomes
\begin{gather*}
-\tau_{k + 2, \ell + 1}^{(\alpha - 1,\beta)}
 \tau_{k, \ell}^{(\alpha,\beta)}\big(\tau_{k, \ell - 1}^{ (\alpha + 1,\beta)} \tau_{k, \ell}^{ (\alpha,\beta)}+\tau_{k + 1, \ell}^{(\alpha,\beta)}\tau_{k - 1, \ell - 1}^{(\alpha + 1,\beta)}\big).
\end{gather*}
After the cancellation of two terms and collecting like terms, we have~\eqref{eq:78}
\begin{gather}
 \big(\tau_{k,\ell}^{(\alpha,\beta)}\big)^{2}\big( \tau_{k + 2, \ell}^{(\alpha - 1,\beta)} \tau_{k, \ell}^{(\alpha + 1,\beta)}
-\tau_{k + 1, \ell}^{(\alpha + 1,\beta)} \tau_{k + 1, \ell}^{(\alpha - 1,\beta)}-\tau_{k + 2, \ell + 1}^{(\alpha - 1,\beta)} \tau_{k, \ell - 1}^{ (\alpha + 1,\beta)}\big)\nonumber\\
\qquad{}= \big(\tau_{k + 1, \ell}^{(\alpha,\beta)}\big)^{2}\big(\tau_{k+1,\ell}^{(\alpha-1,\beta)}\tau_{k - 1, \ell}^{(\alpha + 1,\beta)} -
 \tau_{k, \ell}^{(\alpha + 1,\beta)} \tau_{k, \ell}^{(\alpha - 1,\beta)}- \tau_{k+1, \ell+1}^{ (\alpha-1,\beta)} \tau_{k - 1, \ell-1}^{(\alpha + 1,\beta)} \big).\label{eq:79}
\end{gather}
Now observe that the square factor on the right in~\eqref{eq:79} is obtained from the square factor on the left by a shift $k\mapsto k+1$, and similarly, mutatis mutandis, for the terms in the big parentheses. Therefore, if we know that
\begin{gather}
\big(\tau_{k,\ell}^{(\alpha,\beta)}\big)^{2}= \tau_{k, \ell}^{(\alpha + 1,\beta)} \tau_{k, \ell}^{(\alpha - 1,\beta)}
+\tau_{k+1, \ell+1}^{ (\alpha-1,\beta)} \tau_{k - 1, \ell-1}^{(\alpha + 1,\beta)}-\tau_{k+1,\ell}^{(\alpha-1,\beta)}\tau_{k - 1, \ell}^{(\alpha + 1,\beta)} ,\label{eq:80}
\end{gather}
then \eqref{eq:79} also tells us that
\begin{gather*}
\big(\tau_{k + 1, \ell}^{(\alpha,\beta)}\big)^{2}=\tau_{k + 1, \ell}^{(\alpha + 1,\beta)} \tau_{k + 1, \ell}^{(\alpha - 1,\beta)}+\tau_{k + 2, \ell + 1}^{(\alpha - 1,\beta)} \tau_{k, \ell - 1}^{ (\alpha + 1,\beta)}-
\tau_{k + 2, \ell}^{(\alpha - 1,\beta)} \tau_{k, \ell}^{(\alpha + 1,\beta)}.
\end{gather*}
So it suffices to check \eqref{eq:80} for $k=0$, which reads
\begin{gather*}
\big(\tau_{0,\ell}^{(\alpha,\beta)}\big)^{2}= \tau_{0, \ell}^{(\alpha + 1,\beta)} \tau_{0, \ell}^{(\alpha - 1,\beta)}.
\end{gather*}
But this last equation is true by Item~\ref{item:7} of the examples in Section~\ref{sec:examples-3times-3}: the $\tau$-function $\tau_{0,\ell}^{(\alpha,\beta)}$ is independent of $\alpha$, see also~\eqref{eq:55}. This proves part~\texttt{[0]} of the theorem.

Finally, we use the \texttt{[1,2]} component of the two expressions of $U^{(\alpha,\beta)}_{k,\ell_{+}}$. Bringing all terms under a common denominator, the vanishing of the numerator gives
\begin{gather}
 \tau_{k + 1, \ell}^{(\alpha, \beta + 1)}\tau_{k - 1, \ell}^{(\alpha,\beta)}\tau_{k, \ell + 1}^{(\alpha, \beta - 1)}\tau_{k, \ell + 1}^{(\alpha,\beta)} -
\tau_{k + 1, \ell + 1}^{(\alpha, \beta)}\tau_{k - 1, \ell + 1}^{(\alpha, \beta - 1)}\tau_{k, \ell}^{(\alpha, \beta + 1)}\tau_{k, \ell}^{(\alpha,\beta)} \nonumber \\
\qquad{}+ \big(\tau_{k, \ell + 1}^{ (\alpha, \beta + 1)}\tau_{k, \ell + 1}^{ (\alpha, \beta- 1)}-\tau_{k, \ell + 2}^{(\alpha, \beta -1)}\tau_{k, \ell}^{ (\alpha, \beta + 1)}\big)
 \big(\tau_{k, \ell}^{ (\alpha, \beta)}\big)^{2} \nonumber\\
\qquad{}+ \big(\tau_{k, \ell + 1}^{(\alpha, \beta - 1)}\tau_{k, \ell - 1}^{ (\alpha, \beta +1)} -\tau_{k, \ell}^{ (\alpha, \beta + 1)}\tau_{k, \ell}^{ (\alpha, \beta -1)}\big)
\big(\tau_{k, \ell + 1}^{(\alpha, \beta)}\big)^{2}=0.\label{eq:81}
\end{gather}
In the first term, substitute
\begin{gather*}
\tau_{k , \ell +1}^{(\alpha,\beta-1)}\tau_{k-1, \ell}^{(\alpha,\beta)} = \tau_{k , \ell + 1}^{(\alpha,\beta )} \tau_{k-1, \ell}^{(\alpha,\beta-1)}-
\tau_{k,\ell}^{(\alpha,\beta)}\tau_{k-1, \ell + 1}^{(\alpha,\beta-1)},
\end{gather*}
which is obtained from \texttt{[1]} by the shift $\beta\mapsto \beta-1$. The first term becomes
\begin{gather*}
 \tau_{k + 1, \ell}^{(\alpha, \beta + 1)}\tau_{k, \ell + 1}^{(\alpha,\beta)}\big(\tau_{k , \ell + 1}^{(\alpha,\beta )} \tau_{k-1, \ell}^{(\alpha,\beta-1)}-
\tau_{k,\ell}^{(\alpha,\beta)}\tau_{k-1, \ell + 1}^{(\alpha,\beta-1)}\big).
\end{gather*}
In the second term we also use \texttt{[1]}, in the form
\begin{gather*}
\tau_{k + 1, \ell +1}^{(\alpha,\beta)}\tau_{k, \ell}^{(\alpha,\beta + 1)}=\tau_{k + 1, \ell + 1}^{(\alpha,\beta + 1)} \tau_{k, \ell}^{(\alpha,\beta)}-\tau_{k +1, \ell}^{(\alpha,\beta + 1)}\tau_{k, \ell + 1}^{(\alpha,\beta)},
\end{gather*}
transforming the second term to
\begin{gather*}
-\tau_{k - 1, \ell + 1}^{(\alpha, \beta - 1)}\tau_{k, \ell}^{(\alpha,\beta)}\big(\tau_{k + 1, \ell + 1}^{(\alpha,\beta + 1)}\tau_{k, \ell}^{(\alpha,\beta)}-
\tau_{k +1, \ell}^{(\alpha,\beta + 1)}\tau_{k, \ell + 1}^{(\alpha,\beta)}\big).
\end{gather*}
After cancellation of two terms and collecting like terms, \eqref{eq:81} becomes
\begin{gather}
\big(\tau_{k, \ell + 1}^{ (\alpha, \beta + 1)}\tau_{k, \ell + 1}^{ (\alpha, \beta- 1)}-\tau_{k, \ell + 2}^{(\alpha, \beta -1)}\tau_{k, \ell}^{ (\alpha, \beta + 1)}-\tau_{k - 1, \ell + 1}^{(\alpha, \beta - 1)}
\tau_{k + 1, \ell + 1}^{(\alpha,\beta + 1)}\big) \big(\tau_{k, \ell}^{ (\alpha, \beta)}\big)^{2} \nonumber\\
\qquad{}= \big(\tau_{k, \ell}^{ (\alpha, \beta + 1)}\tau_{k, \ell}^{ (\alpha, \beta -1)}-\tau_{k, \ell + 1}^{(\alpha, \beta - 1)}\tau_{k, \ell - 1}^{ (\alpha, \beta +1)}-
\tau_{k-1, \ell}^{(\alpha,\beta-1)}\tau_{k + 1, \ell}^{(\alpha, \beta + 1)}\big)\big(\tau_{k, \ell + 1}^{(\alpha, \beta)}\big)^{2}. \label{eq:82}
\end{gather}
As before, \eqref{eq:82} implies that if
\begin{gather}
 \big(\tau_{k, \ell}^{ (\alpha, \beta)}\big)^{2}=\tau_{k, \ell}^{ (\alpha, \beta + 1)}
\tau_{k, \ell}^{ (\alpha, \beta -1)}- \tau_{k, \ell + 1}^{(\alpha, \beta - 1)}\tau_{k, \ell - 1}^{ (\alpha, \beta +1)}-\tau_{k-1, \ell}^{(\alpha,\beta-1)}\tau_{k + 1, \ell}^{(\alpha, \beta + 1)} ,\label{eq:83}
\end{gather}
the same equation holds after a shift $\ell\mapsto \ell+1$. So we reduce to the case of $\ell=0$ of \eqref{eq:83}:
\begin{gather*}
\big(\tau_{k, 0}^{ (\alpha, \beta)}\big)^{2}=\tau_{k, 0}^{ (\alpha, \beta + 1)}\tau_{k, 0}^{ (\alpha, \beta -1)}-\tau_{k-1, 0}^{(\alpha,\beta-1)}\tau_{k + 1, 0}^{(\alpha, \beta + 1)},
\end{gather*}
using Item \ref{item:8} of the examples in Section~\ref{sec:examples-3times-3}. By Item~\ref{item:9} of these examples $\tau_{k,0}^{(\alpha,\beta)}=\tau_{k}^{(\alpha-\beta)}$. So the case $\ell=0$ is just the $Q$-system of Theorem~\ref{Thm:Qsystem}. This proves the final part \texttt{[1,1]} of the theorem.
\end{proof}

\appendix

\section{Multi-component fermions and semi-infinite wedge space}\label{sec:ferm-semi-infin}

\subsection{Introduction}\label{sec:introduction-1}

In the main text we work with $n\times n$ matrices (depending on a spectral parameter $z$) for $n=2$ or $3$. In this appendix we will not specify $n$, as the theory of $n$-component fermions and the associated semi-infinite wedge space does not significantly depend on~$n$. A convenient reference for background and more details is ten Kroode and van der Leur~\cite{tKvdL:BosFer}.

\subsection{Semi-infinite Wedge space}\label{sec:semi-infinite-wedge}

Let
\begin{gather*}
\left\{e_{0}= \begin{bmatrix}
 1\\0\\\vdots\\0
\end{bmatrix}
,e_{1}=
\begin{bmatrix}
 0\\1\\\vdots\\0
\end{bmatrix}
,\dots,e_{n-1}=
\begin{bmatrix}
 0\\\vdots\\0\\1
\end{bmatrix}\right\}
\end{gather*} denote the standard basis of $\mathbb{C}^{n}$. Denote the corresponding elementary matrices by~$E_{ab}$ (such that $E_{ab}e_{c}=\delta_{bc}e_{a}$); they are also indexed by integers $0,1,\dots, n-1$. We also need the loop space of~$\mathbb{C}^{n}$, denoted by
\begin{gather*}
H^{(n)}=\mathbb{C}^{n}\otimes \mathbb{C}\big[z,z\inv\big],
\end{gather*}
with basis $e_{a}^{k}=e_{a}z^{k}$, for $a=0,\dots,n-1$ and $k\in\mathbb{Z}$. Let $F^{(n)}$ be the $n$-component fermionic Fock space, the semi-infinite wedge space based on~$H^{(n)}$. It is spanned by semi-infinite wedges
\begin{gather*}
\omega=w_{0}\wedge w_{1}\wedge w_{2}\wedge\cdots,\qquad w_{i}\in H^{(n)},
\end{gather*}
where the $w_{i}$ satisfy some restrictions that we will presently discuss. Semi-infinite wedges obey the usual rules of exterior algebra, like multilinearity in each factor and antisymmetry under exchange of two factors.

To formulate the restrictions on the $w_{i}$ that can appear in the wedge $\omega$ above we introduce the Clifford algebra ${\rm Cl}^{(n)}$ acting on~$F^{(n)}$: it is generated by exterior and interior products, denoted by
$e\big(e_{a}^{k}\big)$ and $i\big(e_{a}^{k}\big)$, defined as wedging and contracting operators, respectively:
\begin{gather*}
e\big(e_{a}^{k}\big)\alpha=e_{a}^{k}\wedge\alpha,\qquad i\big(e_{a}^{k}\big)\alpha=\beta, \qquad \text{if}\quad \alpha=e_{a}^{k}\wedge\beta.
\end{gather*}
It is useful to collect the generators of the Clifford algebra in generating series. Therefore, define \emph{fermion fields}
\begin{gather*}
\psi_{a}^{\pm}(w)=\sum_{k\in\mathbb{Z}} {}_{a}\psi_{(k)}^{\pm}w^{-k-1},\qquad a=0,1,\dots, n-1,
\end{gather*}
where
\begin{gather}
\fermion{a}{k}{+}=e\big(e_{a}^{k}\big)=e_{a}z^{k}\wedge,\qquad \fermion{a}{k}{-}=i\big(e_{a}z^{-k-1}\big).\label{eq:27}
\end{gather}
The fermionic fields satisfy anti-commutation relations\footnote{$[a,b]_{+}=ab+ba$.}
\begin{gather}
\big[\psi_{a}^{\pm}(z),\psi^{\pm}_{b}(w)\big]_{+}=0,\qquad \big[\psi_{a}^{+}(w_{1}),\psi_{b}^{-}(w_{2})\big]_{+}=\delta_{ab}\delta(w_{1},w_{2}),\label{eq:20}
\end{gather}
where the formal delta distribution is defined by
\begin{gather*}
\delta(z,w)=\sum_{k\in\mathbb{Z}}z^{k}w^{-k-1}.
\end{gather*}
Let $v_{0}$ be the vacuum vector
\begin{gather}
v_{0}=
\begin{bmatrix}
 1\\0\\0\\\vdots
\end{bmatrix}\wedge
\begin{bmatrix}
 0\\1\\0\\\vdots
\end{bmatrix}\wedge \dots\wedge
\begin{bmatrix}
 0\\0\\\vdots\\1
\end{bmatrix}
\wedge
\begin{bmatrix}
 z\\0\\0\\\vdots
\end{bmatrix}\wedge
\begin{bmatrix}
 0\\z\\0\\\vdots
\end{bmatrix}\wedge\dots\wedge
\begin{bmatrix}
 0\\0\\\vdots\\z
\end{bmatrix}
\wedge
\begin{bmatrix}
 z^{2}\\0\\0\\\vdots
\end{bmatrix}\wedge
\begin{bmatrix}
 0\\z^{2}\\0\\\vdots
\end{bmatrix}
\wedge\cdots.\label{eq:4}
\end{gather}
Then we define $F^{(n)}$ to be the span of the wedges obtained by acting on the vacuum $v_{0}$ by monomials in the wedging and contracting operators. To get a~basis for $F^{(n)}$ we specify an ordering on the wedging/contracting operators acting on $F^{(n)}$.

\begin{defn}\label{Def:ElemntaryWedge} An \emph{elementary wedge} in $F^{(n)}$ is an element $\omega=Mv_{0}$, where
\begin{gather*}
M=M_{n-1}\cdots M_{1}M_{0},\qquad M_{a}=M_{a}^{+}M_{a}^{-}, \qquad a=n-1,n-2,\dots,2,1,
\end{gather*}
where
\begin{gather*}
M_{a}^{\pm}=\fermion{a}{k_{1}}{\pm}\fermion{a}{k_{2}}{\pm}\cdots \fermion{a}{k_{s}}{\pm},\qquad k_{1}<k_{2}<\dots<k_{s}\le-1,
\end{gather*}
is a monomial in $\fermion{a}{k}{\pm}$ for $k\le -1$, ordered in increasing order from left to right.
\end{defn}

The statement that the elementary wedges form a basis for $F^{(n)}$ follows from the Poincar\'{e}--Birkhoff--Witt theorem for the Lie superalgebra underlying the Clifford algebra.

We define a bilinear form, denoted $\langle\,,\,\rangle$, on $F^{(n)}$ by declaring the elementary wedges to be orthonormal. We then have
\begin{gather} \label{eq:14}
\big\langle \fermion{a}{k}{+}v,w\big\rangle=\big\langle v, \fermion{a}{-k-1}{-}w\big\rangle,
\end{gather}
and
\begin{gather*}
 \big\langle\psi_{a}^{+}(z)v,w\big\rangle=\big\langle v,\psi_{a}^{-}\big(z\inv\big)z\inv w\big\rangle. 
\end{gather*}

The $n$-component fermionic Fock space $F^{(n)}$ has a grading by the Abelian group $\mathbb{Z}^{n}$, i.e., we have a decomposition $F^{(n)}=\oplus_{\alpha\in\mathbb{Z}^{n}}F^{(n)}_{\alpha}$. The vacuum has degree $\alpha=0$. To describe the grading, introduce a~basis in~$\mathbb{Z}^{n}$ by
\begin{gather*}
\delta_{a}=
\begin{bmatrix}
 0&0&\dots&1&\dots&0
\end{bmatrix},\qquad \text{where the $1$ is in position $a=0,1,\dots, n-1$.}
\end{gather*}
The grading on $F^{(n)}$ induces a grading on linear maps on $F^{(n)}$: if $L\colon F^{(n)}\to F^{(n)}$ has the property that there exists a $\delta\in \mathbb{Z}^{n}$ so that, for all $\omega\in \mathbb{Z}^{n}$, $L$ restricts to a map $ F^{(n)}_{\omega}\to F^{(n)}_{\omega+\delta}$, then we say that $L$ has degree $\delta$. Then the grading is uniquely determined by declaring wedging operators $e_{a}z^{k}\wedge$ to have degree $\delta_{a}$, and the contracting operators $i\big(e_{a}z^{k}\big)$ to have degree $-\delta_{a}$. The fields $\psi^{\pm}_{a}(z)$ have degree~$\pm\delta_{a}$.

The \emph{total degree} of an element $\omega$ of degree $\alpha$ is just the sum of the entries in the degree row vector~$\alpha$.

\subsection{Fermionic translation operators and translation group}\label{sec:ferm-transl-oper}

Besides the action of fermion operators, $\fermion{a}{k}{\pm}$, on $F^{(n)}$, we also have the action of fermionic translation operators $Q_{a}\colon F^{(n)}\to F^{(n)}$, $a=0,1,\dots, n-1$, given by
\begin{gather}
Q_{a}v_{0}=\psi_{a}^{+}(z)v_{0}\big|_{z=0}=\fermion{a}{-1}{+}v_{0},\label{eq:16}
\end{gather}
and
\begin{gather}
\psi_{a}^{\pm}(z)Q_{a}=z^{\pm}Q_{a}\psi_{a}^{\pm}(z), \label{eq:23}\\
\psi_{a}^{\pm}(z)Q_{b}=-Q_{b}\psi_{a}^{\pm}(z),\qquad a\ne b, \label{eq:24}\\
Q_{a}Q_{b}=-Q_{b}Q_{a},\qquad a\ne b.\nonumber
\end{gather}
The $Q_{a}$ are invertible. $Q_{a}^{\pm 1}$ has degree $\pm\delta_{a}$.

The $Q_{a}$ are unitary for the standard bilinear form of $F^{(n)}$:
\begin{gather}
\langle Q_{a}v,w\rangle=\big\langle v,Q_{a}\inv w\big\rangle, \qquad a=0,1,\dots, n-1, \quad \text{for all} \ w\in F^{(n)}.\label{eq:n1}
\end{gather}
The fermionic translation operators belong to the central extension of the loop group $\widetilde{\rm GL}_n$ (acting on~$F^{(n)}$). They are lifts of commuting elements of the non-centrally extended loop group, see for instance
\cite[Proposition~5.3.4]{MR97c:58061}. We have
\begin{gather}
\pi(Q_{a})=-\sum_{b\ne a}E_{bb}+z\inv E_{aa}.\label{eq:3}
\end{gather}

The group generated by $Q_{a}$, $a=0,1,\dots, n-1$, contains a subgroup of elements of total degree zero, generated by the translation operators $T_{s}=Q_{s}Q_{s-1}\inv$, $s=1,2,\dots,{n-1}$, of degree $\delta_{s}-\delta_{s-1}$. Another set of generators for this subgroup is also useful: define
\begin{gather*}
T_{ab}=Q_{a}Q_{b}^{-1},
\end{gather*}
of degree $\delta_{a}-\delta_{b}$.
\begin{lem}\label{Lem:GeneratorsTranslationGroupIdentities}\quad
 \begin{enumerate}\itemsep=0pt
 \item $T_{ab}Q_{c}=Q_{c}T_{ab}$ if $c\ne a,b$.\label{item:1}
 \item \label{item:6} For $a\ne b$ and for all $m\in \mathbb{Z}$
\begin{gather*}
\big(Q_{a}Q_{b}\inv\big)^{m}=(-1)^{\frac{m(m-1)}{2}}Q_{a}^{m}Q_{b}^{-m}.
\end{gather*}
 \item \label{item:2} For all $k,\ell\in \mathbb{Z}$ we have
 \begin{gather*}
T_{2}^{k}T_{1}^{\ell}=(-1)^{\frac{k(k-1)}{2}+\frac{\ell(\ell-1)}{2}}Q_{2}^{k}Q_{1}^{\ell-k}Q_{0}^{-\ell}.
\end{gather*}
\item \label{item:3} For all $\alpha,\beta,\gamma\in \mathbb{Z}$
\begin{gather*}
T_{10}^{\alpha}T_{20}^{\beta}T_{21}^{\gamma}=
(-1)^{\frac{\alpha(\alpha-1)}{2}+\frac{\beta(\beta-1)}{2}+\frac{\gamma(\gamma-1)}{2}+\alpha\gamma}
Q_{2}^{\alpha+\beta}Q_{1}^{\alpha-\gamma}Q_{0}^{-\alpha-\beta}.
\end{gather*}
\item \label{item:4}
For all $\alpha,\beta,\gamma\in \mathbb{Z}$
\begin{gather*}
T_{10}^{\alpha}T_{20}^{\beta}T_{21}^{\gamma}= (-1)^{\frac{\beta(\beta-1)}{2}+ \alpha\beta+\alpha\gamma+\beta\gamma}T_{2}^{\beta+\gamma}T_{1}^{\alpha+\beta}.
\end{gather*}
 \end{enumerate}
\end{lem}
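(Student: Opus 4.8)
The plan is to derive all five identities from the single anticommutation relation $Q_aQ_b=-Q_bQ_a$ for $a\ne b$, which immediately gives $Q_aQ_b^{-1}=-Q_b^{-1}Q_a$ and hence the \emph{blockwise sign rule}: powers of one $Q_a$ commute freely, while moving a block $Q_a^{\pm p}$ past a block $Q_b^{\pm q}$ with $a\ne b$ contributes the sign $(-1)^{pq}$. With this in hand, item (1) is a two-swap computation: $T_{ab}Q_c=Q_aQ_b^{-1}Q_c$, and since $c\ne a,b$ pushing $Q_c$ to the left past $Q_b^{-1}$ and then past $Q_a$ produces two sign flips $(-1)(-1)=1$, giving $Q_cQ_aQ_b^{-1}=Q_cT_{ab}$. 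For item (2) I would induct on $m\ge0$: the cases $m=0,1$ are trivial, and in the step I expand $(Q_aQ_b^{-1})^{m+1}=(-1)^{m(m-1)/2}Q_a^mQ_b^{-m}Q_aQ_b^{-1}$ and move the single $Q_a$ left past $Q_b^{-m}$ at the cost $(-1)^m$, so the exponent updates by $m(m-1)/2+m=m(m+1)/2$, exactly as required. The identity then extends to all $m\in\mathbb{Z}$ by running the induction downward (the case $m=-1$, namely $(Q_aQ_b^{-1})^{-1}=Q_bQ_a^{-1}=-Q_a^{-1}Q_b$, already agrees with the formula).

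Item (3) follows by applying (2) separately to $T_2^k=(Q_2Q_1^{-1})^k$ and $T_1^l=(Q_1Q_0^{-1})^l$, multiplying, and merging the two adjacent powers of $Q_1$ (which commute) via $Q_1^{-k}Q_1^l=Q_1^{l-k}$; this gives the scalar $(-1)^{k(k-1)/2+l(l-1)/2}$ and the word $Q_2^kQ_1^{l-k}Q_0^{-l}$. Item (4) is the heart of the matter. I would first apply (2) to each of $T_{10}^\alpha,T_{20}^\beta,T_{21}^\gamma$, extracting the scalar $(-1)^{\alpha(\alpha-1)/2+\beta(\beta-1)/2+\gamma(\gamma-1)/2}$ and leaving the word $Q_1^\alpha Q_0^{-\alpha}Q_2^\beta Q_0^{-\beta}Q_2^\gamma Q_1^{-\gamma}$, then reorder this into the canonical form $Q_2^\bullet Q_1^\bullet Q_0^\bullet$. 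The three exponents are \emph{forced} by additivity of the $\mathbb{Z}^n$-degree: summing the degrees $\alpha(\delta_1-\delta_0)+\beta(\delta_2-\delta_0)+\gamma(\delta_2-\delta_1)$ shows the powers must be $Q_2^{\beta+\gamma}$, $Q_1^{\alpha-\gamma}$, $Q_0^{-(\alpha+\beta)}$ (so I expect the $Q_2$-exponent to read $\beta+\gamma$ rather than $\alpha+\beta$). The only genuine computation is the reordering sign: viewing the word as six blocks and applying the blockwise sign rule, the net sign is $(-1)$ raised to the sum, over inverted pairs of distinct-index blocks, of the products of their lengths; the cross terms occur in cancelling pairs modulo $2$ and leave precisely $(-1)^{\alpha\gamma}$, which combines with the scalar to give the stated sign $(-1)^{\alpha(\alpha-1)/2+\beta(\beta-1)/2+\gamma(\gamma-1)/2+\alpha\gamma}$.

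Finally, item (5) I would deduce from (4) and (3) rather than recompute from scratch. Applying (3) with $k=\beta+\gamma$ and $l=\alpha+\beta$ expresses $T_2^{\beta+\gamma}T_1^{\alpha+\beta}$ through exactly the canonical word $Q_2^{\beta+\gamma}Q_1^{\alpha-\gamma}Q_0^{-(\alpha+\beta)}$ that appears in (4); solving for that word and substituting into (4), the two scalars combine. To simplify the resulting exponent I would use the identity $(x+y)(x+y-1)/2\equiv x(x-1)/2+y(y-1)/2+xy\pmod 2$ on the terms coming from (3); the pure $\alpha(\alpha-1)/2$, $\gamma(\gamma-1)/2$ contributions cancel, one copy of $\beta(\beta-1)/2$ survives (signs being irrelevant mod $2$), and the cross terms assemble into $\beta(\beta-1)/2+\alpha\beta+\alpha\gamma+\beta\gamma$, matching the claim. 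The step I expect to be the main obstacle is the sign bookkeeping in (4): keeping the Koszul-type count of anticommutation signs organized so that the cancellations are transparent and the residual $(-1)^{\alpha\gamma}$ is unambiguous; everything else is either a one-line swap, a clean induction, or algebraic substitution.
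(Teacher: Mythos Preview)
Your proposal is correct and follows essentially the same route as the paper: part~(1) is declared ``clear,'' part~(2) is ``a simple induction,'' part~(3) applies (2) to each factor and merges the $Q_1$ powers, part~(4) applies (2) thrice and then reorders the six-block word into $Q_2^{\bullet}Q_1^{\bullet}Q_0^{\bullet}$ picking up exactly $(-1)^{\alpha\gamma}$, and part~(5) is obtained by substituting (3) with $k=\beta+\gamma$, $l=\alpha+\beta$ into (4) and simplifying the sign via $\binom{x+y}{2}\equiv\binom{x}{2}+\binom{y}{2}+xy\pmod 2$. Your observation that the $Q_2$-exponent in the statement of~(4) must be $\beta+\gamma$ rather than $\alpha+\beta$ is correct (the degree count forces it, and the paper's own proof computes $Q_2^{\beta+\gamma}$); this is a typo in the displayed statement.
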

\begin{proof} Part (\ref{item:1}) is clear. Part (\ref{item:6}) is a simple
 induction.
For part (\ref{item:2}) we get
 \begin{gather*}
 T_{2}^{k}T_{1}^{\ell}=\big(Q_{2}Q_{1}\inv\big)^{k}\big(Q_{1}Q_{0}\inv\big)^{\ell} =(-1)^{\frac{k(k-1)}{2}}(-1)^{\frac{\ell(\ell-1)}{2}}Q_{2}^{k}Q_{1}^{-k}Q_{1}^{\ell}Q_{0}^{-\ell}.
 \end{gather*}
Similarly for part (\ref{item:3}) we have
\begin{gather*}
 T_{10}^{\alpha}T_{20}^{\beta}T_{21}^{\gamma}=(-1)^{\frac{\alpha(\alpha-1)}{2}+\frac{\beta(\beta-1)}{2}+\frac{\gamma(\gamma-1)}{2}}Q_{1}^{\alpha}Q_{0}^{-\alpha}Q_{2}^{\beta}Q_{0}^{-\beta}Q^{\gamma}_{2}Q_{1}^{-\gamma}\\
\hphantom{T_{10}^{\alpha}T_{20}^{\beta}T_{21}^{\gamma}}{}
=(-1)^{\frac{\alpha(\alpha-1)}{2}+\frac{\beta(\beta-1)}{2}+\frac{\gamma(\gamma-1)}{2}}Q_{2}^{\beta}Q_{1}^{\alpha}Q_{2}^{\gamma}Q_{1}^{-\gamma}Q_{0}^{-\alpha}Q_{0}^{-\beta}\\
\hphantom{T_{10}^{\alpha}T_{20}^{\beta}T_{21}^{\gamma}}{}
=(-1)^{\frac{\alpha(\alpha-1)}{2}+\frac{\beta(\beta-1)}{2}+\frac{\gamma(\gamma-1)}{2}+\alpha\gamma}Q_{2}^{\beta+\gamma}Q_{1}^{\alpha-\gamma}Q_{0}^{-\alpha-\beta}.
\end{gather*}
Finally, for part (\ref{item:4}) we substitute
\begin{gather*}
Q_{2}^{\beta+\gamma}Q_{1}^{\alpha-\gamma}Q_{0}^{-\alpha-\beta}=(-1)^{\frac{\alpha(\alpha-1)}{2}+\frac{\gamma(\gamma-1)}{2}+\alpha\beta+\beta\gamma}T_{2}^{\beta+\gamma}T_{1}^{\alpha+\beta},
\end{gather*}
(from part (\ref{item:2})), in the right hand side of part
(\ref{item:3}). The result then follows from
\begin{gather*}
(-1)^{\frac{\alpha(\alpha-1)}{2}+\frac{\beta(\beta-1)}{2}+\frac{\gamma(\gamma-1)}{2}+\alpha\gamma+\frac{\alpha(\alpha-1)}{2}+\frac{\gamma(\gamma-1)}{2}+\alpha\beta+\beta\gamma}=
(-1)^{\frac{\beta(\beta-1)}{2}+\alpha\beta+\alpha\gamma+\beta\gamma}.\tag*{\qed}
\end{gather*}\renewcommand{\qed}{}
\end{proof}

Define the ordered product of $k\ge 0$ fermions by
\begin{gather*}
\mathop{\vec{\prod}}\limits_{l=1}^{k}{}_{a}\psi_{(-l)}^{\pm}={}_{a}\psi^{\pm}_{(-k)}\cdots{}_{a}\psi_{(-1)}^{\pm}.
\end{gather*}
The empty product is as usual the identity.

\begin{lem}\label{Lem:FermionQvacuum}\quad
 \begin{enumerate}
 \item \label{item:5} For $k\in\mathbb{Z}$,
\begin{gather*}
Q_{a}^{k}v_{0}=
\begin{cases}
v_{0},&k=0,\\
\displaystyle \mathop{\vec{\prod}}\limits_{l=1}^{k}\big(\fermion{a}{-l}{+}\big)v_{0},&k>0,\\
\displaystyle \mathop{\vec{\prod}}\limits_{l=1}^{-k}\big(\fermion{a}{-l}{-}\big)v_{0},&k<0.
\end{cases}
\end{gather*}
 \item For all $\alpha,\beta\ge0$
 \begin{gather*} Q_{1}^{\beta}Q_{0}^{\alpha}v_{0}=(-1)^{\beta\alpha}\mathop{\vec{\prod}}\limits_{l=1}^{\alpha}\fermion{0}{-l}{+}\mathop{\vec{\prod}}\limits_{m=1}^{\beta}\fermion{1}{-m}{+}v_{0}
=\mathop{\vec{\prod}}\limits_{m=1}^{\beta}\fermion{1}{-m}{+}\mathop{\vec{\prod}}\limits_{l=1}^{\gamma}\fermion{0}{-l}{+} v_{0}\\
\hphantom{Q_{1}^{\beta}Q_{0}^{\alpha}v_{0}}{}=e_{1}z^{-\beta}\wedge e_{1}z^{1-\beta}\wedge\cdots\wedge e_{1}z\inv\wedge e_{0}z^{-\alpha}\wedge e_{0}z^{1-\alpha}\wedge \cdots\wedge e_{0}z\inv \wedge v_{0}.
 \end{gather*}
 \end{enumerate}
\end{lem}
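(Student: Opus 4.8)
The plan is to reduce everything to two structural inputs: the definition \eqref{eq:16}, which supplies the single step $Q_a v_0 = \fermion{a}{-1}{+} v_0$, and the relations \eqref{eq:23}, \eqref{eq:24}, which let me propagate $Q_a$ through a string of fermion modes. First I would put \eqref{eq:23} in mode form: expanding $\psi_a^{+}(z)=\sum_k \fermion{a}{k}{+} z^{-k-1}$ and comparing coefficients in $\psi_a^{+}(z)Q_a = zQ_a\psi_a^{+}(z)$ gives the conjugation rule $Q_a\fermion{a}{k}{+}Q_a\inv = \fermion{a}{k-1}{+}$, and the minus version of \eqref{eq:23} likewise gives $Q_a\fermion{a}{k}{-}Q_a\inv=\fermion{a}{k+1}{-}$. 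For Part~\ref{item:5} with $k>0$ I then induct: assuming $Q_a^k v_0 = \vec{\prod}_{l=1}^{k}\fermion{a}{-l}{+}v_0$, I commute one further $Q_a$ rightward through $\fermion{a}{-k}{+}\cdots\fermion{a}{-1}{+}$ via $Q_a\fermion{a}{-l}{+}=\fermion{a}{-l-1}{+}Q_a$; this lowers every index by one, and once $Q_a$ reaches the vacuum and acts by \eqref{eq:16} it reassembles precisely $\fermion{a}{-(k+1)}{+}\cdots\fermion{a}{-1}{+}$.

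The genuinely new input is the base case for $k<0$, since \eqref{eq:16} only describes the forward action of $Q_a$ and I must independently identify $Q_a\inv v_0$. I would establish $Q_a\inv v_0 = \fermion{a}{-1}{-}v_0$ by verifying the equivalent $Q_a\fermion{a}{-1}{-}v_0 = v_0$. Conjugating, $Q_a\fermion{a}{-1}{-}v_0 = \bigl(Q_a\fermion{a}{-1}{-}Q_a\inv\bigr)Q_a v_0 = \fermion{a}{0}{-}\fermion{a}{-1}{+}v_0$; the field anticommutator yields $[\fermion{a}{-1}{+},\fermion{a}{0}{-}]_{+}=1$, while $\fermion{a}{0}{-}v_0 = i(e_a z\inv)v_0 = 0$ because $e_a z\inv$ does not appear in the vacuum \eqref{eq:4}. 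Hence $\fermion{a}{0}{-}\fermion{a}{-1}{+}v_0 = v_0$. With this base case, the same induction as before, now using $Q_a\inv\fermion{a}{-l}{-}=\fermion{a}{-l-1}{-}Q_a\inv$, produces $Q_a^{-j}v_0 = \vec{\prod}_{l=1}^{j}\fermion{a}{-l}{-}v_0$, completing Part~\ref{item:5}.

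For the second part I would combine Part~\ref{item:5} with the cross relation \eqref{eq:24}. Writing $Q_0^\alpha v_0 = \fermion{0}{-\alpha}{+}\cdots\fermion{0}{-1}{+}v_0$ and pushing $Q_1^\beta$ to the right, each pass of a single $Q_1$ through a single $\fermion{0}{k}{+}$ costs a sign by \eqref{eq:24}, so moving $Q_1^\beta$ past all $\alpha$ of them yields $(-1)^{\alpha\beta}$; applying Part~\ref{item:5} once more to $Q_1^\beta v_0$ gives the first displayed equality with sign $(-1)^{\alpha\beta}$ (the printed exponent $\beta\gamma$ should read $\alpha\beta$, as $\gamma$ occurs nowhere else). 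The second equality is then pure sign bookkeeping: distinct colours anticommute, $[\fermion{0}{k}{+},\fermion{1}{l}{+}]_{+}=0$, so interchanging the block of $\beta$ operators $\fermion{1}{}{+}$ with the block of $\alpha$ operators $\fermion{0}{}{+}$ contributes a second $(-1)^{\alpha\beta}$, cancelling the first. Finally, interpreting each $\fermion{a}{-l}{+}$ as wedging by $e_a z^{-l}$ through \eqref{eq:27} turns the ordered product into the explicit semi-infinite wedge. The main obstacle is the $k<0$ base case, i.e.\ pinning down $Q_a\inv v_0$, together with careful sign tracking through \eqref{eq:24}; the rest is a two-line induction.
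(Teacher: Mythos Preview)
Your argument is correct. The paper gives no proof at all for this lemma (it closes with a bare \qed), so there is nothing to compare against; you have supplied exactly the verification the authors omit. The mode form of \eqref{eq:23} you extract, $Q_a\fermion{a}{k}{\pm}Q_a\inv=\fermion{a}{k\mp1}{\pm}$, is right, and your handling of the $k<0$ base case via $\fermion{a}{0}{-}\fermion{a}{-1}{+}v_0=v_0$ is the clean way to do it. Your observation that the printed exponent $\beta\gamma$ (and the later subscript $\gamma$) should read $\alpha\beta$ (resp.\ $\alpha$) is also correct; these are evident typos in the statement.
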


Just as the fermionic translation operators are unitary, so are the translation operators $T_{s}$ and $T_{ab}$: from~\eqref{eq:n1} it follows that
\begin{gather} \label{eq:22}
 \langle Tv, w\rangle=\big\langle v,T\inv w\big\rangle \qquad \text{for all} \quad w\in F^{(n)}.
\end{gather}

\subsection[The Lie algebra $\widetilde{\mathfrak{gl}}_n$ and fermions]{The Lie algebra $\boldsymbol{\widetilde{\mathfrak{gl}}_n}$ and fermions}\label{sec:lie-algebra-sln}

The loop algebra $\widetilde{\mathfrak{gl}}_n$ is defined as the Lie subalgebra of $\mathfrak{gl}(H^{(n)})$ generated by $E_{ab}z^{k}$, $a,b=0,1,\dots, n-1,$ $ k\in\mathbb{Z}$, where
\begin{gather*}
\big(E_{ab}z^{k}\big)\cdot \big(e_{c}z^{m}\big)=\delta_{bc}e_{a}z^{k+m}.
\end{gather*}
The loop algebra $\widetilde{\mathfrak{gl}}_n$ does not quite act on $F^{(n)}$. One would like to define the action on $F^{(n)}$ by
\begin{gather*}
E_{ab}z^{k}\mapsto\sum_{l\in\mathbb{Z}}\big(e_{a}z^{k+l}\wedge\big)\big(i\big(e_{b}z^{l}\big)\big)=\sum_{l\in\mathbb{Z}}\fermion{a}{k+l}{+}\fermion{b}{-l-1}{-}.
\end{gather*}
However, considering the action of $E_{aa}z^{0}$ on the vacuum $v_{0}$ we would get
\begin{gather*}
E_{aa}v_{0}=\sum_{l\ge0}v_{0},
\end{gather*}
(since $\big(e_{a}z^{l}\wedge\big)\big(i(e_{a}z^{l})\big)v_{0}=v_{0}$ for $l\ge0$), so that these diagonal elements would have a divergent action. Therefore we introduce a normal ordering on fermion fields~\cite{MR1651389} by
\begin{gather*}
\colon \psi_{a}^{+}(z)\psi_{b}^{-}(w)\colon=\psi_{a,\text{cr}}^{+}(z)\psi_{b}^{-}(w)-\psi_{b}^{-}(w)\psi_{a,\text{ann}}^{+}(z),
\end{gather*}
where the creation and annihilation parts of a fermion field are given by
\begin{gather*}
\psi_{\text{cr}}(z)=\sum_{k\ge0}\psi_{(-k-1)}z^{k},\qquad \psi_{\text{ann}}(z)=\sum_{k\ge0}\psi_{(k)}z^{-k-1}.
\end{gather*}
Of course, when $a\ne b$
\begin{gather*}
\colon \psi_{a}^{+}(z)\psi_{b}^{-}(w)\colon= \psi_{a}^{+}(z)\psi_{b}^{-}(w).
\end{gather*}
We define the normal ordering on the components of the fermion fields by
\begin{gather*}
\colon\psi_{a}^{+}(z)\psi_{b}^{-}(w)\colon=\sum_{k,l\in\mathbb{Z}}\colon \fermion{a}{k}{+}\fermion{b}{l}{-}\colon z^{-k-1}w^{-l-1}.
\end{gather*}
Then we have
\begin{gather*}
\colon \fermion{a}{k}{+}\fermion{b}{l}{-}\colon= \begin{cases}
 \fermion{a}{k}{+}\fermion{b}{l}{-},&k<0, \\
 -\fermion{b}{l}{-}\fermion{a}{k}{+},&k\ge 0.
\end{cases}
\end{gather*}
Note that
\begin{gather} \label{eq:77}
 \colon \fermion{a}{k}{+}\fermion{b}{l}{-}\colon= \fermion{a}{k}{+}\fermion{b}{l}{-},
\end{gather}
unless $a=b$ and $k=-l-1$.

Next we can define an action on $F^{(n)}$ by
\begin{gather*}
E_{ab}z^{k}\mapsto \colon\sum_{l\in\mathbb{Z}}\big(e_{a}z^{k+l}\wedge\big)\big(i\big(e_{b}z^{l}\big)\big)\colon=\sum_{l\in\mathbb{Z}} \colon{}_{a}\psi^{+}_{(k+l)}{}_{b}\psi^{-}_{(-l-1)}\colon.
\end{gather*}
This gives a central extension (cf.~\cite{tKvdL:BosFer})
\begin{gather*}
 0\to \mathbb{C}\to \widehat {\mathfrak{gl}_{n}} \to \widetilde{\mathfrak{gl}}_n\to0.
\end{gather*}
Introduce a generating series of loop algebra elements by
\begin{gather}
E_{ab}(z_{1})=\sum_{l\in\mathbb{Z}}E_{ab}z^{l}z_{1}^{-l-1}.
\label{eq:2}
\end{gather}
The action of this generating series on $F^{(n)}$ can be represented by a normal ordered product of fermion fields
\begin{gather}
E_{ab}(z_{1})=\colon\psi_{a}^{+}(z_{1})\psi_{b}^{-}(z_{1})\colon. \label{eq:26}
\end{gather}
Indeed,
\begin{gather*}
 E_{ab}(z_{1})=\sum_{k,l\in\mathbb{Z}}\colon\fermion{a}{k+l}{+}\fermion{b}{-l-1}{-}\colon z_{1}^{-k-1}\\
\hphantom{E_{ab}(z_{1})}{} =\sum_{k,l\in\mathbb{Z}}\colon(\fermion{a}{k+l}{+}z_{1}^{-k-l-1})(\fermion{b}{-l-1}{-}z_{1}^{l})\colon= \colon\psi_{a}^{+}(z_{1})\psi_{b}^{-}(z_{1})\colon.
\end{gather*}
The series $E_{ab}(z_1)$, acting on $F^{(n)}$, has degree $\delta_a-\delta_b$.

Equation \eqref{eq:26} is the reason we chose to encode the wedging and contracting operators as coefficients of fermion fields according to~\eqref{eq:27}.

We also need the commutator of the generating series of Lie algebra elements with fermionic translation operators.

\begin{lem} \label{Lem-Comm-E10-shifts} For all $\alpha,\beta\in\mathbb{Z}$ we have
\begin{gather*}
Q_{0}^{\beta}Q_{1}^{-\alpha}E_{10}(z)Q_{1}^{\alpha}Q_{0}^{-\beta}=(-1)^{\alpha+\beta}z^{\alpha+\beta}E_{10}(z).
\end{gather*}
\end{lem}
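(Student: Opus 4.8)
The plan is to work with the fermionic realization of $E_{1,0}(z)$ recorded in \eqref{eq:26}, namely $E_{1,0}(z)=\colon\psi_1^+(z)\psi_0^-(z)\colon$, and to conjugate the two fermion fields separately using the commutation relations \eqref{eq:23} and \eqref{eq:24}. Since $Q_0,Q_1$ act as honest invertible operators on $F^{(n)}$, the left-hand side is literally operator conjugation of $E_{1,0}(z)$, so the whole computation reduces to tracking how each factor transforms.

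The first step is to observe that, because the two indices differ ($1\ne 0$), the normal ordering is in fact trivial. Comparing $\colon\psi_1^+(z)\psi_0^-(z)\colon$ with the plain product, the two differ only by $-[\psi_{1,\mathrm{ann}}^+(z),\psi_0^-(z)]_+$, which vanishes since $[\psi_1^+(z),\psi_0^-(w)]_+=\delta_{10}\,\delta(z,w)=0$. Hence $E_{1,0}(z)=\psi_1^+(z)\psi_0^-(z)$ as an operator on $F^{(n)}$. This is the crucial simplification: it guarantees that conjugation introduces no contraction (central) term, and that I may conjugate the product factor by factor.

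Next I would record the four elementary conjugations. From \eqref{eq:23} one reads off $Q_1\inv\psi_1^+(z)Q_1=z\,\psi_1^+(z)$ and $Q_0\psi_0^-(z)Q_0\inv=z\,\psi_0^-(z)$, while from \eqref{eq:24} one gets $Q_0\psi_1^+(z)Q_0\inv=-\psi_1^+(z)$ and $Q_1\inv\psi_0^-(z)Q_1=-\psi_0^-(z)$. Each right-hand side is a scalar multiple of the original field, so these relations iterate immediately to all integer powers of $\alpha,\beta$, yielding
\begin{align*}
Q_0^{\beta}Q_1^{-\alpha}\,\psi_1^+(z)\,Q_1^{\alpha}Q_0^{-\beta}&=(-1)^{\beta}z^{\alpha}\,\psi_1^+(z),\\
Q_0^{\beta}Q_1^{-\alpha}\,\psi_0^-(z)\,Q_1^{\alpha}Q_0^{-\beta}&=(-1)^{\alpha}z^{\beta}\,\psi_0^-(z).
\end{align*}

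Finally, multiplying these two scalar factors and using $E_{1,0}(z)=\psi_1^+(z)\psi_0^-(z)$ gives the claimed $(-1)^{\alpha+\beta}z^{\alpha+\beta}E_{1,0}(z)$. The only point that genuinely needs care—the main obstacle, such as it is—is the normal-ordering/central-extension issue handled in the second step: one must be certain that conjugating by the $Q_a$ produces no extra scalar beyond those in \eqref{eq:23}--\eqref{eq:24}. The vanishing of the contraction for distinct indices removes this worry completely, after which the remainder is just bookkeeping of signs and powers of $z$.
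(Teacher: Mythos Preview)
Your argument is correct. The paper actually states this lemma without proof, so there is nothing to compare against; your route---dropping the normal ordering because the two fermion indices differ, then conjugating $\psi_1^+(z)$ and $\psi_0^-(z)$ separately via \eqref{eq:23}--\eqref{eq:24} and multiplying the resulting scalar factors---is exactly the intended computation.
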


\subsection{Root lattice}\label{sec:root-lattice}

Recall, see Appendix \ref{sec:introduction-1}, the group $\mathbb{Z}^{n}$ that gives a grading for fermionic Fock space $F^{(n)}$. The root lattice $A_{n-1}$ is a subgroup of $\mathbb{Z}^{n}$. It is
generated by
\begin{gather*}
\alpha_{i}=\delta_{i}-\delta_{i-1},\qquad i=1,2,\dots,n-1,
\end{gather*}
so
\begin{gather*}
A_{n-1}=\mathop{\oplus}\limits_{i=1}^{n-1}\mathbb{Z}\alpha_{i}\subset\mathbb{Z}^{n}.
\end{gather*}
We will call elements in $A_{n-1}$ of the form $\alpha=\sum\limits_{i=1}^{n-1}n_{i}\alpha_i$ positive roots if $n_{i}\ge0$.

The translation group is also graded by $A_{n-1}$: the generator $T_{s}=Q_{s}Q_{s-1}\inv$ has degree $\alpha_{s}$. Similarly the Lie algebra generating fields $E_{ab}(z)$ have
\begin{gather*}
\deg(E_{aa-1}(z))=\alpha_a.
\end{gather*}

\section[Expressions for the $\tau$-functions]{Expressions for the $\boldsymbol{\tau}$-functions}\label{sec:expr-tau-funct}

In this appendix we prove Theorems \ref{Thm:tauHankel} and~\ref{thm:n=3taufunctions}. This gives expressions for the $\tau$-functions in terms of coordinates on the lower triangular subgroup~$\mathcal{N}$ of~$\widetilde{\rm GL}_n$, for $n=2,3$.

\subsection[The case of $n=2$, Theorem \ref{Thm:tauHankel}]{The case of $\boldsymbol{n=2}$, Theorem \ref{Thm:tauHankel}}\label{sec:expr-tau-funct-1}

Recall that $\mathcal{N}\subset \widetilde{GL}_{2}$ was defined as the subgroup of elements of the form \eqref{eq:7}. The inverse image of $\mathcal{N}$ under the projection $\pi\colon \widehat{\rm GL}_2 \to \widetilde {\rm GL}_{2}$ can be shown to be isomorphic to the group $\mathcal{N}\times \mathbb{C}^{\times}$ of pairs $(n,\mathsf{z})$, with multiplication $(n_{1},\mathsf{z}_{1})\cdot (n_{2},\mathsf{z}_{2})=(n_{1}n_{2},\mathsf{z}_{1}\mathsf{z}_{2})$. In other words, the central extension defining $\widehat{\rm GL}_2$ is trivial when restricted to $\pi\inv(\mathcal{N})$. Denote by $\widehat{\mathcal{N}}$ the subgroup of $\pi\inv(\mathcal{N})$ corresponding to pairs $(n,\mathsf{1})$ in $\mathcal{N}\times \mathbb{C}^{\times}$. Then $\widehat{\mathcal{N}}$ and $\mathcal{N}$ are isomorphic.

The $\tau$-functions for $\widehat{\rm GL}_2$ are given as matrix elements on $F^{(2)}$:
\begin{gather}
\tau_{k}^{(\alpha)}=\big\langle T^{k}v_{0},g_{C}^{(\alpha)}v_{0}\big\rangle,\label{eq:38}
\end{gather}
where the element $g_{C}^{(\alpha)}$ of the lower triangular subgroup $\widehat {\mathcal{N}}$ of $\widehat{\rm GL}_2$ has projection given by~\eqref{eq:28}.

We write the group element $g_{C}^{(\alpha)}$ as
\begin{gather*}
g^{(\alpha)}_{C}=\exp\big(\Gamma^{(\alpha)}_{C}\big)=\sum_{\ell=0}^{\infty}\frac1{\ell!}\big(\Gamma^{(\alpha)}_{C}\big)^{\ell},
\end{gather*}
where
\begin{gather*}
 \Gamma^{(\alpha)}_{C}=\Res_{z_{1}}\big(C^{(\alpha)}(z_{1})E_{10}(z_{1})\big), 
\end{gather*}
and $C^{(\alpha)}(z)$ is given by~\eqref{eq:31}, and the generating series of loop algebra elements $E_{10}(z_{1})$ by~\eqref{eq:2}.

The reader might object that $\Gamma^{(\alpha)}_{C}$ is an infinite sum of fermion operators each acting on $F^{(2)}$, and it is not so clear what this sum means. By imposing conditions on the coordinates $c_{k}\in \mathbb{C}$ we can ensure that $\Gamma^{(\alpha)}_{C}$ is indeed a~map $F^{(2)}\to F^{(2)}$. For our purposes it is easier to think of the $c_{i}$ as formal variables, and interpret $\Gamma^{(\alpha)}_{C}$ and $g_{C}^{(\alpha)}$ as maps $F^{(2)}\to F^{(2)}[[c_{i}]]$. (See also the comments about the interpretation of the loop group element following~\eqref{eq:7}.) This has as a~consequence that the $\tau$-function~\eqref{eq:38} is not a complex number, but a formal series in the~$c_{i}$.

$T^{k}v_{0}$ has degree $k(\delta_{1}-\delta_{0})$ in $F^{(2)}$, and $\Gamma_{C}^{{(\alpha)}}$ has degree $\delta_{1}-\delta_{0}$, since $E_{10}(z_{1})$ does. In $F^{(2)}$ homogeneous elements of different degree are orthogonal for $\langle\,,\rangle$. Hence only the $\ell=k$ term contributes to \eqref{eq:38} and
\begin{gather*}
\tau_{k}^{(\alpha)}=\frac1{k!}\big\langle T^{k}v_{0},\big(\Gamma_{C}^{(\alpha)}\big)^{k}v_{0}\big\rangle.
\end{gather*}
Recall that $C^{(\alpha)}(z)=(-1)^\alpha\sum\limits_{n\in\mathbb{Z}}c_{n+\alpha}z^{-n-1}$. Associated to the series $C^{(\alpha)}(z)$ is a $\mathbb{C}$-linear map
\begin{gather*}
c^{(\alpha)}\colon \ \mathbb{C}\big[\big[z,z\inv\big]\big]\to B=\mathbb{C}[[c_{i}]]_{i\in\mathbb{Z}}, \qquad f(z)\mapsto \Res_{z}\big(C^{(\alpha)}(z)f(z)\big).
\end{gather*}
We need multiple copies of the map $c^{(\alpha)}$ acting on series in variables $z_{1},z_{2},\dots$. We define $c^{(\alpha)}_{i}(f(z_{i}))=c^{(\alpha)}(f(z))$, for $i=1,2,\dots$, and impose linearity in the variables $z_{j}$, $j\ne i$, i.e., the condition that (for instance) $c^{(\alpha)}_{i}\big(f(z_{i})z_{j}^{m}\big)=z^{m}_{j}c^{(\alpha)}_{i}(f(z_{i}))$.

Then we can write
\begin{gather*}
 \big(\Gamma^{(\alpha)}_{C}\big)^{\ell}=\prod_{i=1}^{\ell}c^{(\alpha)}_{i}\prod_{j=1}^{\ell}E_{10}(z_{j}),
\end{gather*}
and so
\begin{gather*}
\tau_{k}^{(\alpha)}=\frac1{k!}\prod_{i=1}^{k}c^{(\alpha)}_{i}\left(\left\langle T^{k} v_{0},\prod_{j=1}^{k}E_{10}(z_{j})v_{0}\right\rangle\right)\notag\\
\hphantom{\tau_{k}^{(\alpha)}}{} =
\frac1{k!}\prod_{i=1}^{k}c^{(\alpha)}_{i} \left(\left\langle T^{k}v_{0}, \prod_{j=1}^{k}\left(\psi_{1}^{+}(z_{j})\psi_{0}^{-}(z_{j}) \right) v_{0}\right\rangle\right)\notag\\
\hphantom{\tau_{k}^{(\alpha)}}{}=\frac1{k!} \prod_{i=1}^{k}c^{(\alpha)}_{i} \left(\left\langle Q_{1}^{k}Q_{0}^{-k}v_{0}, \prod_{s=1}^{k}\psi_{1}^{+}(z_{s}) \prod_{t=1}^{k}\psi_{0}^{-}(z_{t})v_{0}\right\rangle
 \right).
\end{gather*}
Here we use the fact that, in the expression \eqref{eq:26} for $E_{10}(z_{1})$ in fermions, the normal ordering is just the ordinary product of fermion fields. We also use Lemma~\ref{Lem:GeneratorsTranslationGroupIdentities}, Part~\ref{item:6}{ and the anti-commutation relations~\eqref{eq:20}}.

Next we use the factorization Lemmas~\ref{Lem:fact} and~\ref{lem:3} to calculate the factors involving~$\psi_{0}(z)$ and $\psi_{1}(z)$ separately; we find
\begin{gather*}
\tau_{k}^{(\alpha)}= \frac1{k!}\prod_{i=1}^{k}c^{(\alpha)}_{i}\big(\det\big(V^{(k)}_{\{z_{i}\}}\big)^{2}\big).
\end{gather*}
Here $V^{(k)}_{\{z_{i}\}}$ is the Vandermonde matrix~\eqref{eq:21}.

This proves the first part of Theorem \ref{Thm:tauHankel}, since $\det\big(V^{(k)}_{\{z_{i}\}}\big)=\prod\limits_{k\ge j>i\ge 1}(z_{i}-z_{j})$.

For the second part we need a formula for the square of a Vandermonde determinant. Let the permutation group $\mathfrak{S}_{k}$ act on $\mathbb{C}[z_{1},z_{2},\dots, z_{k}]$ by permuting the subscripts.

\begin{lem}\label{Lem:squareVandermondeSumPerms}
 \begin{gather}
\det\big(V^{(k)}_{\{z_{i}\}}\big)^{2}= \sum_{\sigma\in \mathfrak{S}_{k}}\det\big(z_{\sigma(i)}^{i+j-2}\big)_{i,j=1}^{k}.\label{eq:40}
\end{gather}
\end{lem}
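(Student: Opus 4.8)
The plan is to recognize the square of the Vandermonde determinant as a Hankel determinant of power sums, and then to expand that Hankel determinant by multilinearity in its rows. Writing the Vandermonde matrix as $V^{(k)}_{\{z_i\}}=(z_i^{j-1})_{i,j=1}^{k}$, I would first compute the Gram-type product $(V^{(k)}_{\{z_i\}})^{\mathsf T}V^{(k)}_{\{z_i\}}$, whose $(j,l)$ entry is $\sum_{i=1}^{k}z_i^{j-1}z_i^{l-1}=\sum_{i=1}^{k}z_i^{j+l-2}=p_{j+l-2}$, the power sum of degree $j+l-2$. Since $\det(A^{\mathsf T}A)=(\det A)^{2}$, this immediately gives
\[
\det(V^{(k)}_{\{z_i\}})^{2}=\det\big(p_{i+j-2}\big)_{i,j=1}^{k},
\]
a Hankel determinant in the power sums $p_m=\sum_{i=1}^{k}z_i^{m}$.

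Next I would expand this Hankel determinant by multilinearity in its $k$ rows. The $i$-th row is $\big(\sum_{m=1}^{k}z_m^{i+j-2}\big)_{j=1}^{k}$, so choosing an index $m=\sigma(i)\in\{1,\dots,k\}$ for each row yields
\[
\det\big(p_{i+j-2}\big)_{i,j=1}^{k}=\sum_{\sigma\colon\{1,\dots,k\}\to\{1,\dots,k\}}\det\big(z_{\sigma(i)}^{i+j-2}\big)_{i,j=1}^{k}.
\]
For a fixed choice $\sigma$, pulling the factor $z_{\sigma(i)}^{\,i-1}$ out of row $i$ leaves $\det\big(z_{\sigma(i)}^{\,j-1}\big)_{i,j=1}^{k}$, which is a Vandermonde determinant in the entries $z_{\sigma(1)},\dots,z_{\sigma(k)}$. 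This factor vanishes whenever two of the $\sigma(i)$ coincide, so only the terms where $\sigma$ is a bijection survive, and the sum collapses to $\sum_{\sigma\in S_{k}}\det\big(z_{\sigma(i)}^{i+j-2}\big)_{i,j=1}^{k}$, which is exactly the right-hand side of the Lemma.

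Since only the square of $V^{(k)}_{\{z_i\}}$ appears, the precise sign convention in $\det(V^{(k)}_{\{z_i\}})=\prod_{k\ge j>i\ge1}(z_i-z_j)$ is irrelevant, and I need not reconcile it with the standard Vandermonde ordering. The only point requiring care is the vanishing argument in the multilinear expansion: one must observe that after extracting $z_{\sigma(i)}^{\,i-1}$ from each row the remaining matrix is genuinely Vandermonde, so that repeated values of $\sigma(i)$ force two equal rows. I expect this to be the sole nonroutine step; everything else is the Cauchy--Binet identity $\det(A^{\mathsf T}A)=(\det A)^{2}$ together with multilinearity. Alternatively, one could argue purely combinatorially by applying the Leibniz formula to both sides and matching terms under the relabelling $(a,b)=(\sigma^{-1},\pi\sigma^{-1})$ of pairs of permutations, but the power-sum route is shorter and fits the orthogonal-polynomial context of the paper.
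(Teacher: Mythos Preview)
Your proof is correct, but it follows a different route from the paper's. The paper works directly from the right-hand side: for each $\sigma$ it pulls $z_{\sigma(i)}^{\,i-1}$ out of row $i$ to write $\det(z_{\sigma(i)}^{\,i+j-2})=(-1)^{|\sigma|}\bigl(\prod_{s}z_{\sigma(s)}^{\,s-1}\bigr)\det(V^{(k)}_{\{z_i\}})$, and then recognises $\sum_{\sigma}(-1)^{|\sigma|}\prod_{s}z_{\sigma(s)}^{\,s-1}$ as the Leibniz expansion of a second Vandermonde determinant. Your argument instead inserts the power-sum Hankel $\det(p_{i+j-2})$ as a bridge, obtained via the Gram identity $\det(V^{\mathsf T}V)=(\det V)^{2}$, and then expands by multilinearity over all maps $\sigma\colon[k]\to[k]$, using the same row-extraction trick only to kill the non-injective terms. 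Both approaches hinge on the same factorisation of each summand; yours has the pleasant feature that the intermediate object is exactly the Hankel determinant that appears in the very next lemma (Lemma~\ref{Lem-B-Heine}), so it dovetails nicely with the orthogonal-polynomial context, while the paper's version is marginally more elementary in that it never leaves the sum over $S_k$. One cosmetic point: the paper takes $V^{(k)}_{\{z_i\}}=(z_j^{\,i-1})_{i,j}$, the transpose of your convention, so strictly it is $V V^{\mathsf T}$ rather than $V^{\mathsf T}V$ that equals $(p_{i+j-2})$; since only $\det^2$ enters, this is immaterial.
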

\begin{proof} The right hand side of \eqref{eq:40} can be written as
\begin{gather*}
\sum_{\sigma\in \mathfrak{S}_{k}}{\sigma}\det
\begin{bmatrix}
 1&z_{1}&z_{1}^{2}&\dots&z_{1}^{k-1}\\
 z_{2}&z_{2}^{2}&z_{2}^{3}&\dots&z_{2}^{k}\\
 z_{3}^{2}&z_{3}^{3}&z_{3}^{4}&\dots&z_{3}^{k+1}\\
\vdots&\vdots&\vdots&\ddots&\vdots\\
 z_{k}^{k-1}&z_{k}^{k}&z_{k}^{k+1}&\dots&z_{k}^{2k-2}
\end{bmatrix}.
\end{gather*}
From this, we see that the $\sigma=e$ term on the r.h.s.~of~(\ref{eq:40}) is
\begin{gather*}
A=\det\big(z_{i}^{i+j-2}\big)=\prod_{s=1}^{k}z_{s}^{s-1}\det\big(z_{i}^{j-1}\big) =\prod_{s=1}^{k}z_{s}^{s-1}\det\big(V^{(k)}_{\{z_{i}\}}\big).
\end{gather*}
Now for every permutation $\sigma$ in $\mathfrak{S}_{k}$ we have
\begin{gather*}
\sigma (A)=(-1)^{\abs{\sigma}}\prod_{s=1}^{k}z_{\sigma(s)}^{s-1}\det\big(V^{(k)}_{\{z_{i}\}}\big).
\end{gather*}
Summing over all permutations, we obtain
\begin{gather*}
\sum_{\sigma}\sigma(A)=\sum_{\sigma\in \mathfrak{S}_{k}}(-1)^{\abs \sigma}\prod_{s=1}^{k}z_{\sigma(s)}^{s-1}\det\big(V^{(k)}_{\{z_{i}\}}\big)=\det\big(V^{(k)}_{\{z_{i}\}}\big)^{2}.\tag*{\qed}
\end{gather*}\renewcommand{\qed}{}
\end{proof}

We then introduce a monomial column vector $v(z) =\begin{bmatrix} 1\\z\\z^{2}\\\vdots\\z^{k-1} \end{bmatrix}$. Define the Hankel matrix (with coefficients in~$B$)
\begin{gather*}
T^{(\alpha)}_{k}=c^{(\alpha)}\big(v(z)v(z)^{T}\big)=c^{(\alpha)}
\begin{bmatrix}
 1&z&z^{2}&\dots&z^{k-1}\\
z&z^{2}&z^{3}&\dots&z^{k}\\
z^{2}&z^{3}&z^{4}&\dots&z^{k+1}\\
\vdots&\vdots&\vdots&\ddots&\vdots\\
z^{k-1}&z^{k}&z^{k+1}&\dots&z^{2k-1}
\end{bmatrix}\\
\hphantom{T^{(\alpha)}_{k}}{} = (-1)^{k\alpha}\begin{bmatrix}
 c_{\alpha}&c_{\alpha+1}&c_{\alpha+2}&\dots&c_{\alpha+k-1}\\
 c_{\alpha+1}&c_{\alpha+2}&c_{\alpha+3}&\dots&c_{\alpha+k}\\
 c_{\alpha+2}&c_{\alpha+3}&c_{\alpha+4}&\dots&c_{\alpha+k+1}\\
\vdots&\vdots&\vdots&\ddots&\vdots\\
 c_{\alpha+k-1}&c_{\alpha+k}&c_{\alpha+k+1}&\dots&c_{\alpha+2k-2}
 \end{bmatrix},
\end{gather*}
where we apply $c^{(\alpha)}$ componentwise. Then the value of $\det\big(T^{(\alpha)}_{k}\big)\in B$ can be calculated in terms of Vandermonde determinants.

\begin{lem} \label{Lem-B-Heine} If $V^{(k)}_{\{z_{i}\}}$ is the Vandermonde matrix~\eqref{eq:21} then
 \begin{gather*}
\det\big(T^{(\alpha)}_{k}\big)= \frac{1}{k!}\prod_{i=1}^{k}c^{(\alpha)}_{i}\big(\det\big(V^{(k)}_{\{z_{i}\}}\big)^{2}\big).
 \end{gather*}
\end{lem}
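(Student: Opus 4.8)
The plan is to reduce the Hankel determinant to the symmetrized Vandermonde expression by combining the multilinearity of the determinant with the linearity of the functionals $c^{(\alpha)}_i$, and then to symmetrize over $S_k$ so that Lemma \ref{Lem:squareVandermondeSumPerms} applies directly. First I would introduce a separate auxiliary variable $z_i$ for each row $i$, so that the $(i,j)$ entry $c^{(\alpha)}(z^{i+j-2})$ of $T^{(\alpha)}_k$ is written as $c^{(\alpha)}_i(z_i^{i+j-2})$. Expanding the determinant over $S_k$ and using that the $c^{(\alpha)}_i$ act on distinct variables (treating the others as scalars, per their definition) and take values in the commutative ring $B$, each term $\sgn(\sigma)\prod_i c^{(\alpha)}_i(z_i^{i+\sigma(i)-2})$ rewrites as $\left(\prod_i c^{(\alpha)}_i\right)\left(\sgn(\sigma)\prod_i z_i^{i+\sigma(i)-2}\right)$. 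Summing over $\sigma$ and pulling the variable-separated product of functionals outside the sum gives
\[
\det(T^{(\alpha)}_k)=\left(\prod_{i=1}^{k}c^{(\alpha)}_i\right)\left(\det\left(z_i^{i+j-2}\right)_{i,j=1}^{k}\right).
\]
The determinant on the right is exactly the quantity $A$ appearing in the proof of Lemma \ref{Lem:squareVandermondeSumPerms}.

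Next I would observe that the operator $\prod_{i=1}^{k}c^{(\alpha)}_i$ is invariant under relabeling the variables $z_1,\dots,z_k$: applied to a monomial $z_1^{n_1}\cdots z_k^{n_k}$ it returns $c_{\alpha+n_1}\cdots c_{\alpha+n_k}\in B$, which depends only on the multiset $\{n_1,\dots,n_k\}$ since the $c$'s commute. Hence for every $\sigma\in S_k$ one has $\left(\prod_i c^{(\alpha)}_i\right)(\sigma A)=\left(\prod_i c^{(\alpha)}_i\right)(A)$, where $\sigma$ acts by permuting subscripts as in Lemma \ref{Lem:squareVandermondeSumPerms}. Averaging this identity over the $k!$ elements of $S_k$ and then applying \eqref{eq:40} yields
\[
\det(T^{(\alpha)}_k)=\frac{1}{k!}\left(\prod_{i=1}^{k}c^{(\alpha)}_i\right)\left(\sum_{\sigma\in S_k}\sigma A\right)=\frac{1}{k!}\prod_{i=1}^{k}c^{(\alpha)}_i\left(\det(V^{(k)}_{\{z_i\}})^{2}\right),
\]
which is the claim.

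The step that requires the most care is the symmetrization: one must verify that permuting the dummy variables genuinely leaves $\prod_i c^{(\alpha)}_i$ unchanged (rather than merely permuting factors), since this is precisely what licenses replacing $A$ by the symmetric average $\frac{1}{k!}\sum_\sigma \sigma A$ and thereby produces the combinatorial factor $1/k!$. Everything else is the bookkeeping of multilinearity, and the identification of $\sum_\sigma \sigma A$ with $\det(V^{(k)}_{\{z_i\}})^2$ is exactly the content of Lemma \ref{Lem:squareVandermondeSumPerms}.
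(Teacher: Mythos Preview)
Your proof is correct and follows essentially the same approach as the paper: separate the variables row by row to pull the product $\prod_i c^{(\alpha)}_i$ outside the determinant, then use the $S_k$-invariance of this product to symmetrize and invoke Lemma~\ref{Lem:squareVandermondeSumPerms}. Your write-up is slightly more explicit about the Leibniz expansion and the justification of the symmetry, but the argument is the same.
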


\begin{proof} By the trivial observation that
\begin{gather*}
c^{(\alpha)}(f(z))\cdot c^{(\alpha)}(g(z))=c^{(\alpha)}_{1}(f(z_{1}))\cdot c^{(\alpha)}_{2}(g(z_{2}))
\end{gather*}
we have
\begin{gather*}
\det\big(T^{(\alpha)}_{k}\big)=\prod_{i=1}^{k}c^{(\alpha)}_{i}\det
\begin{bmatrix}
 1&z_{1}&z_{1}^{2}&\dots&z_{1}^{k-1}\\
 z_{2}&z_{2}^{2}&z_{2}^{3}&\dots&z_{2}^{k}\\
 z_{3}^{2}&z_{3}^{3}&z_{3}^{4}&\dots&z_{3}^{k+1}\\
\vdots&\vdots&\vdots&\ddots&\vdots\\
 z_{k}^{k-1}&z_{k}^{k}&z_{k}^{k+1}&\dots&z_{k}^{2k-2}
\end{bmatrix} =\prod_{i=1}^{k}c^{(\alpha)}_{i}\det \big(z_{i}^{i+j-2}\big).
 \end{gather*}
Now for any polynomial $f(z_{1},z_{2},\dots,z_{k})$ and any permutation $\sigma\in \mathfrak{S}_{k}$ we have
\begin{gather*} \prod _{i=1}^{k}c^{(\alpha)}_{i}(\sigma f(z_{1},z_{2},\dots,z_{k}))= \prod _{i=1}^{k}c^{(\alpha)}_{i}(f(z_{1},z_{2},\dots,z_{k})).\end{gather*} Hence
\begin{gather*}
\det\big(T^{(\alpha)}_{k}\big)=\frac1{k!}\prod_{i=1}^{k}c^{(\alpha)}_{i}\left(\sum_{\sigma}\sigma\det \big(z_{i}^{i+j-2}\big)\right),
\end{gather*}
and the lemma follows from the previous Lemma~\ref{Lem:squareVandermondeSumPerms}.
\end{proof}

This finishes the proof of the second part of Theorem~\ref{Thm:tauHankel}.

\subsection[Proof of $n=3$, Theorem \ref{thm:n=3taufunctions}]{Proof of $\boldsymbol{n=3}$, Theorem \ref{thm:n=3taufunctions}}\label{sec:proof-n=3}

\begin{proof}In this proof, for typographical simplicity, we will suppress the shift superscripts ${}^{(\alpha,\beta)}$.

We write $ g=\exp(\Gamma_{c})\exp(\Gamma_{d})\exp(\Gamma_{e})$, where
\begin{gather*} \Gamma_{c}=\Res_{z}(C(z_{1})E_{10}(z_{1})) , \qquad \Gamma_{d}=\Res_{z_{1}}(D(z_{1})E_{20}(z_{1})), \qquad \Gamma_{e}=\Res_{z_{1}}(E(z_{1})E_{21}(z_{1})).
\end{gather*} (Recall~(\ref{eq:61}).) This implies that $\tau_{k,\ell}$ is the sum of
\begin{gather*}
c_{n_c,n_d,n_e}=\Res_{\mathbf{x},\mathbf{y},\mathbf{z}}\left( \prod_{i=1}^{n_c}C(x_{i}) \prod_{i=1}^{n_d}D(y_{i}) \prod_{i=1}^{n_e}E(z_{i}) p_{n_c,n_d,n_e}\right),
\end{gather*}
where
\begin{gather*}
p_{n_c,n_d,n_e}=\left\langle{T_{1}^{k}T_{2}^{\ell}v_{0}, \prod_{i=1}^{n_c}\psi_{1}^{+}(x_{i})\psi_{0}^{-}(x_{i}) \prod_{i=1}^{n_d}\psi_{2}^{+}(y_{i})\psi_{0}^{-}(y_{i}) \prod_{i=1}^{n_e}\psi_{2}^{+}(z_{i})\psi_{1}^{-}(z_{i})v_{0}}\right\rangle,
\end{gather*}
where
\begin{gather*}
n_d+n_e=\ell,\qquad n_c+n_d=k.\label{eq:86}
\end{gather*}
We can factorize this using Appendix \ref{sec:reduct-one-comp} as
\begin{gather*}
p_{n_c,n_d,n_e}=(-1)^{\big( \frac{k(k-1)+\ell(\ell-1)}{2}+k\ell+ \frac{n_c(n_c-1)+n_d(n_d-1)+n_e(n_e-1)}{2}+n_cn_e\big)}\\
\hphantom{p_{n_c,n_d,n_e}=}{} \times
\left\langle{Q_{2}^{\ell}v_{0}, \prod_{i=1}^{n_d}\psi_{2}^{+}(y_{i}) \prod_{i=1}^{n_e}\psi_{2}^{+}(z_{i})v_{0}}\right\rangle\\
\hphantom{p_{n_c,n_d,n_e}=}{} \times
\left\langle{Q_{1}^{k-\ell}v_{0}, \prod_{i=1}^{n_c}\psi_{1}^{+}(x_{i}) \prod_{i=1}^{n_e}\psi_{1}^{-}(z_{i})v_{0}}\right\rangle
\left\langle{Q_{0}^{-k}v_{0}, \prod_{i=1}^{n_c}\psi_{0}^{-}(x_{i}) \prod_{i=1}^{n_d}\psi_{0}^{-}(y_{i})}\right\rangle.
\end{gather*}
Since, using \eqref{eq:86},
\begin{gather*}
(-1)^{ \frac{k(k-1)+\ell(\ell-1)}{2}+k\ell+ \frac{n_c(n_c-1)+n_d(n_d-1)+n_e(n_e-1)}{2}+n_cn_e}=(-1)^{ \frac{n_d(n_d+1)}{2}},
\end{gather*}
the theorem follows from the calculation of correlation functions in Appendix~\ref{sec:one-comp-ferm}.
\end{proof}

\section[Birkhoff factorization and matrix elements of semi-infinite wedge space]{Birkhoff factorization and matrix elements\\ of semi-infinite wedge space}\label{sec:birkh-fact-matr}

In this appendix we sketch proofs of Theorems~\ref{Thm:birkh-fact-tau} and~\ref{Thm:birkh-3x3fact-tau}. First we will discuss a more general statement about the Birkhoff factorization in~$\widetilde{\rm GL}_n$.

\subsection[Birkhoff factorization and $n$-component fermions]{Birkhoff factorization and $\boldsymbol{n}$-component fermions}\label{sec:birkh-fact-n}
Recall that most elements $\gamma\in {\rm GL}_{n}$ have a Gauss factorization:
\begin{gather*}
 \gamma=\gamma_{-}\gamma_{0+},
\end{gather*}
with $\gamma_{-}=\mathds{1}_{n\times n}+ $ strictly lower triangular, $\gamma_{0+}$ upper triangular (and invertible). Only the $\gamma$ for which the principal minors vanish don't have a Gauss factorization.

A similar story works for the loop group $\widetilde{\rm GL}_n$ of ${\rm GL}_{n}$. Let $\widetilde{\rm GL}_{n{-}}$ be the subgroup of $\widetilde{\rm GL}_n$ of elements $g_{-}=1+O\big(z\inv\big)$, and let $\widetilde{\rm GL}_{n0{+}}$ be the subgroup of elements $g_{0+}=A+O(z)$, where~$A$ is invertible (and independent of~$z$). Then most elements in~$\widetilde{\rm GL}_n$ have a \emph{Birkhoff factorization}
\begin{gather}
g=g_{-}g_{0+},\qquad g\in \widetilde{\rm GL}_n.\label{eq:9}
\end{gather}
The existence of such a factorization is controlled by the non-vanishing of a fermion matrix element in the semi-infinite wedge space~$F^{(n)}$. We will express~$g_{-}$ in terms of such fermion matrix elements..

Recall that, just as in the case of the loop algebra $\widetilde{\mathfrak{gl}}_n$, the loop group $\widetilde{\rm GL}_n$ does not actually act on $F^{(n)}$. We instead have a central extension (cf.~\cite{PrSe:LpGrps})
\begin{gather*}
1\to\mathbb{C}^{\times}\to \widehat{\rm GL}_n \overset{\pi}{\to} \widetilde{\rm GL}_n\to1,
\end{gather*}
and an action of $\widehat{\rm GL}_n$ on $F^{(n)}$. The inverse images $\pi\inv\big(\widetilde{\rm GL}_{n{-}}\big)$ and $\pi\inv\big(\widetilde{\rm GL}_{n0{+}}\big)$ can be shown to be isomorphic to product groups $\widetilde{\rm GL}_{n{-}}\times \mathbb{C}^{\times}$ and $\widetilde{\rm GL}_{n0{+}}\times \mathbb{C}^{\times}$, respectively, i.e., the central extension defining $\widehat{\rm GL}_n$ is trivial over the two inverse images. Denote by $\widehat{\rm GL}_{n{-}}$ the subgroup of $\pi\inv\big(\widetilde{\rm GL}_{n{-}}\big)$ corresponding to pairs $(g,1)$, and let $\widehat{\rm GL}_{n0{+}}$ denote the full preimage of $\widetilde{\rm GL}_{n0{+}}$. Then the intersection of $\widehat{\rm GL}_{n{-}}$ and $\widehat{\rm GL}_{n0{+}}$ will be the element $1\in\widehat{\rm GL}_n$; the image of $\mathbb{C}^{\times}$ belongs to $\widehat{\rm GL}_{n0{+}}$. Most elements $\hat{g}\in \widehat{\rm GL}_n $ will have a (unique) Birkhoff factorization,
\begin{gather*}
\hat{g}=\hat{g}_{-}\hat{g}_{0+}.
\end{gather*}
If $v_{0}$ is the vacuum \eqref{eq:4} of $F^{(n)}$, the $\tau$-function is defined as the matrix element
\begin{gather}
\tau(\hat{g})=\langle v_{0},\hat g v_{0}\rangle.
\label{eq:37}
\end{gather}
The element $\hat g$ (and also $g=\pi(\hat g)$) has a Birkhoff factorization as long as $\tau(\hat g)$ is not zero.

To calculate the negative component of $g$ in the factorization~\eqref{eq:9}, we choose a lift $\hat{g}$ of $g$, i.e., $\pi(\hat{g})=g$, and study the action of $\hat{g}$ on~$F^{(n)}$.

We have
\begin{gather*}
\hat{g}_{0+}v_{0}=\tau(\hat g)v_{0}.
\end{gather*}
This is explained in the case $n=2$ in~\cite{MR90b:5810} (cf.~\cite{MR699439,MR771354}). Hence (assuming $\hat g$ has a Birkhoff factorization, or $\tau(\hat g)\ne0$)
\begin{gather*}
 g_{-}v_{0}=g_{-}\hat {g}_{0+}v_{0}/\tau(\hat g)=\hat{g}v_{0}/\tau(\hat g).
\end{gather*}
Now write $g_{-}$ in terms of matrix elements
\begin{gather*}
g_{-}=\sum_{a,b=0}^{n-1}g_{ab}(z)E_{ab},
\end{gather*}
where
\begin{gather*}
g_{ab}(z)=\sum_{k\in\mathbb{Z}}g_{ab}^{(k)}z^{-k-1}, \qquad g_{ab}^{(-1)}=\delta_{ab},\qquad g_{ab}^{(l)}=0, \quad \text{if} \ \ l<-1. 
\end{gather*}
On $F^{(n)}$ (see Section~\ref{sec:lie-algebra-sln})
\begin{gather*}
E_{ab}z^{-k-1}=\colon\sum_{l}\big(e_{a}z^{l-k-1}\wedge\big)\big(i\big(e_{b}z^{l}\big)\big)\colon=\colon\sum_{l\in\mathbb{Z}}\fermion{a}{l-k-1}{+}\fermion{b}{-l-1}{-}\colon.\label{eq:17}
\end{gather*}
For the $E_{ab}z^{-k-1}$ appearing in $g_{-}$ (i.e., those with $k\ge0$) the normal ordering can be omitted, see~\eqref{eq:77}.

Now to find $g_{ab}^{(k)}$ we calculate
\begin{gather*}
g_{-}v_{0}=v_{0}+\sum_{k\ge0}g_{ab}^{(k)}\sum_{l\in\mathbb{Z}}\fermion{a}{l-k-1}{+}\fermion{b}{-l-1}{-}v_{0}+ \cdots,
\end{gather*}
where the omitted terms are quadratic and higher in the $E_{ab}$. We see that $g_{ab}^{(k)}$ appears as the coefficient of many elementary wedges $\fermion{a}{l-k-1}{+}\fermion{b}{-l-1}{-}v_{0}$. To calculate $g_{ab}^{(k)}$ we just pick one of these elementary wedges, say the $l=0$ term, and use orthogonality of elementary wedges to find ($k\ge0$)
\begin{gather}
g_{ab}^{(k)} =\big\langle \fermion{a}{-k-1}{+} \fermion{b}{-1}{-}v_{0},g_{-}v_{0}\big\rangle =\big\langle \fermion{a}{-k-1}{+}\fermion{b}{-1}{-}v_{0},\hat gv_{0}\big\rangle/\tau(\hat{g})\nonumber\\
\hphantom{g_{ab}^{(k)}}{} =\big\langle Q_{b}^{-1}v_{0},\fermion{a}{k}{-}\hat gv_{0}\big\rangle/\tau(\hat{g}),\label{eq:36}
\end{gather}
using \eqref{eq:14} and \eqref{eq:16}.

Now observe that
\begin{gather*}
\colon \fermion{a}{0}{+}\fermion{b}{-1}{-}\colon v_{0}=\delta_{ab}v_{0},
\end{gather*}
and
\begin{gather*}
\colon \fermion{a}{l}{+}\fermion{b}{-1}{-}\colon v_{0}=0,\qquad l>0.
\end{gather*}
This allows us to calculate $g_{ab}^{(k)}$ for $k<0$ in the same way as for $k\ge0$, see \eqref{eq:36}.

These remarks prove the following theorem.

\begin{thm}\label{Thm:BirkhoffFactFermionMatrixElement} Let $g\in\widetilde{\rm GL}_n$ admit a Birkhoff factorization $g=g_{-}g_{0+}$. Then
\begin{gather*}
g_{-}=\sum_{a,b=0}^{n-1}g_{ab}(z)E_{ab},
\end{gather*}
where
\begin{gather*}
g_{ab}(z)=\big\langle Q_{b}^{-1}v_{0},\psi_{a}^{-}(z)\hat gv_{0}\big\rangle/\tau(\hat{g}), 
\end{gather*}
and $\hat g\in \widehat{\rm GL}_n$ is any lift of $g$, so that $\pi(\hat g)=g$. The $\tau$-function is given by~\eqref{eq:37}.
\end{thm}

\subsection[The $2\times2$-case; Proof of Theorem~\ref{Thm:birkh-fact-tau}]{The $\boldsymbol{2\times2}$-case; proof of Theorem~\ref{Thm:birkh-fact-tau}}\label{sec:2times2-case-proof}

We now specialize $n$ and $g$ in Theorem~\ref{Thm:BirkhoffFactFermionMatrixElement}: in this subsection we set $n=2$ and
\begin{gather*}
g=\pi\big(g^{[k](\alpha)}\big)=\pi\big(T^{-k}\big)\pi\big(g^{(\alpha)}_{C}\big),
\end{gather*}
where $\pi(g^{(\alpha)}_{C})$ is given by \eqref{eq:28}. We have two interpretations of $g^{[k](\alpha)}$: if we choose the coefficients in $C$ suitably, $g^{(\alpha)}_{C}$ gives a well defined operator on~$F^{(2)}$, and the corresponding $\tau$-function will be a complex number. If the $\tau$-function is not zero, $g$ will have a~Birkhoff factorization. However, we prefer to think of $g^{[k](\alpha)}$ as a map $F^{(2)}\to F^{(2)}[[c_{i}]]$, so that the $\tau$-function is also a~formal series in $\mathbb{C}[[c_{i}]]$, which is not zero, and so the ``formal group element'' $g$ in this case will always have a Birkhoff factorization.

In this case our calculation of the Birkhoff factorization in Theorem~\ref{Thm:BirkhoffFactFermionMatrixElement} gives us
\begin{gather*}
g^{[k](\alpha)}_{-}=\sum_{a,b=0}^{1}g^{[k](\alpha)}_{ab}(z)E_{ab},\qquad g^{[k](\alpha)}_{ab}(z)=\big\langle Q_{b}^{-1}v_{0},\psi_{a}^{-}(z)T^{-k}g_{C}^{(\alpha)}v_{0}\big\rangle/\tau_{k}^{(\alpha)},
\end{gather*}
where the $\tau$-function is defined in \eqref{eq:32}. We will proceed to rewrite $g_{ab}^{[k](\alpha)}(z)$.

First of all, we will expand $g_{C}^{(\alpha)}$ in fermion operators. Note that
\begin{gather*}
g^{(\alpha)}_{C}=\exp\big(\Gamma^{(\alpha)}_{C}\big),
\end{gather*}
where
\begin{gather}
 \Gamma^{(\alpha)}_{C}=\Res_{z_{1}}\big(C^{(\alpha)}(z_{1})E_{10}(z_{1})\big),\label{eq:30}
\end{gather}
and $C^{(\alpha)}(z)$ is given by \eqref{eq:31}, and the generating series $E_{10}(z_{1})$ by~\eqref{eq:2}. (See the beginning of Appendix~\ref{sec:expr-tau-funct-1} for an interpretation of $g_{C}^{(\alpha)}$ and~$\Gamma^{(\alpha)}_{C}$ as operators on~$F^{(2)}$.)

This means that $g^{(\alpha)}_{C}=\sum\limits_{\ell\ge0}\big(\Gamma^{(\alpha)}_{C}\big)^{\ell}/\ell!$, both acting on $H^{(2)}$ and on $F^{(2)}$. Hence
\begin{gather}
 g^{[k](\alpha)}_{ab}(z)=\sum_{\ell\ge0} \frac1{\ell!}\big\langle Q_{b}\inv v_{0}, \psi_{a}^{-}(z)T^{-k}\big(\Gamma^{(\alpha)}_{C}\big)^{\ell}v_{0}\big\rangle /\tau_{k}^{(\alpha)}. \label{eq:33}
\end{gather}
Next, we use the standard grading on $F^{(2)}$. Note that $\Gamma_{C}^{(\alpha)}$ has degree $\delta_{1}-\delta_{0}$ (since $E_{10}(z_{1})$ does). So $Q_{b}\inv$ has degree $-\delta_{b}$ and $\psi_{a}^{-}(z)T^{-k}\big(\Gamma^{(\alpha)}_{C}\big)^{\ell}$ has degree $k(\delta_{0}-\delta_{1})+\ell(\delta_{1}-\delta_{0})-\delta_{a}$. Hence, by orthogonality of terms of different degree in $F^{(2)}$, we find that the only non-zero contribution to the sum~\eqref{eq:33} arises when $\ell=k+a-b$ and
\begin{gather*}
 g^{[k](\alpha)}_{ab}(z)=\frac1{\ell!}\big\langle Q_{b}\inv v_{0}, \psi_{a}^{-}(z)T^{-k}\big(\Gamma^{(\alpha)}_{C}\big)^{\ell}v_{0}\big\rangle/\tau_{k}^{(\alpha)}.
\end{gather*}
From now on, we will often use the abbreviation $\ell=k+a-b$ in formulas for $g^{[k](\alpha)}_{ab}(z)$.

Next, we need to commute $T^{-k}$ through $\psi_{a}^{-}(z)$.
\begin{lem}
 If $T=Q_{1}Q_{0}\inv$ then
\begin{gather*}
\psi_{a}^{-}(z)T^{-k}=(-1)^{k}z^{k(2a-1)}T^{-k}\psi_{a}^{-}(z).
\end{gather*}
\end{lem}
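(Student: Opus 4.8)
The plan is to reduce the statement to single-step commutation relations between $\psi_a^-(z)$ and the generators $Q_0^{\pm1},Q_1^{\pm1}$, all of which follow from \eqref{eq:23} and \eqref{eq:24} (here $n=2$, so $a\in\{0,1\}$). Reading those relations with the minus field, I first record
\[
\psi_a^-(z)Q_a = z^{-1}Q_a\psi_a^-(z),\qquad \psi_a^-(z)Q_b = -Q_b\psi_a^-(z)\quad(a\ne b),
\]
and then left- and right-multiply by the appropriate $Q^{-1}$ to obtain the inverse versions
\[
\psi_a^-(z)Q_a^{-1}=z\,Q_a^{-1}\psi_a^-(z),\qquad \psi_a^-(z)Q_b^{-1}=-Q_b^{-1}\psi_a^-(z)\quad(a\ne b).
\]

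Next I would rewrite the translation element in terms of the $Q_a$ using Lemma \ref{Lem:GeneratorsTranslationGroupIdentities}(\ref{item:6}): since $T^{-k}=(Q_1Q_0^{-1})^{-k}=(Q_0Q_1^{-1})^{k}=(-1)^{k(k-1)/2}Q_0^{k}Q_1^{-k}$, it suffices to commute $\psi_a^-(z)$ past $Q_0^{k}Q_1^{-k}$. Iterating the single-step relations gives, in the two cases,
\[
\psi_0^-(z)Q_0^{k}Q_1^{-k}=(-1)^k z^{-k}Q_0^{k}Q_1^{-k}\psi_0^-(z),\qquad
\psi_1^-(z)Q_0^{k}Q_1^{-k}=(-1)^k z^{k}Q_0^{k}Q_1^{-k}\psi_1^-(z),
\]
where the factor $(-1)^k$ records the $k$ anticommutations with the ``wrong'' generator and the power $z^{\mp k}$ records the $k$ genuine commutations with $Q_a^{\pm1}$. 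Multiplying back the prefactor $(-1)^{k(k-1)/2}$ recombines $Q_0^{k}Q_1^{-k}$ into $T^{-k}$ on the right, and the two cases collapse into the single formula $(-1)^k z^{k(2a-1)}T^{-k}\psi_a^-(z)$, since $z^{-k}=z^{k(2\cdot 0-1)}$ and $z^{k}=z^{k(2\cdot 1-1)}$.

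The computation is essentially sign bookkeeping, and the only point needing care is exactly that. The crucial observation is that the reordering sign $(-1)^{k(k-1)/2}$ enters identically in the expansion of $T^{-k}$ on both sides, so it cancels and never appears in the final answer; all the surviving sign is the anticommutation factor $(-1)^k$. An equally clean alternative, avoiding part (\ref{item:6}) altogether, is to verify the base case $\psi_a^-(z)T^{-1}=(-1)z^{2a-1}T^{-1}\psi_a^-(z)$ directly from the single-step relations (checking $a=0,1$ separately) and then induct on $k$, with the $z$-powers adding and the signs multiplying. I would likely present this inductive version, since it is self-contained and sidesteps $(-1)^{k(k-1)/2}$ entirely.
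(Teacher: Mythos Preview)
Your proof is correct, and the alternative inductive version you mention at the end is exactly the route taken in the paper: it verifies the base case $\psi_a^-(z)T^{-1}=-z^{2a-1}T^{-1}\psi_a^-(z)$ by writing $T^{-1}=Q_0Q_1^{-1}$ and treating $a=0,1$ separately via \eqref{eq:23}, \eqref{eq:24}, with the general $k$ following by iteration. Your primary approach through $T^{-k}=(-1)^{k(k-1)/2}Q_0^{k}Q_1^{-k}$ is a harmless variant that reaches the same place with the same ingredients.
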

\begin{proof}
By \eqref{eq:23}, \eqref{eq:24} we have
\begin{gather*}
 \psi_{a}^{-}(z)T\inv= \psi_{a}^{-}(z)Q_{0}Q_{1}\inv = \begin{cases}
 z\inv Q_{0}\psi_{a}^{-}(z)Q_{1}\inv&\text{ if }a=0,\\
 -Q_{0}\psi_{a}^{-}(z)Q_{1}\inv&\text{ if }a=1
 \end{cases}\\
 \hphantom{\psi_{a}^{-}(z)T\inv= \psi_{a}^{-}(z)Q_{0}Q_{1}\inv}{} =
 \begin{cases}
 -z\inv T\inv\psi_{a}^{-}(z)&\text{ if }a=0,\\
 -zT\inv\psi_{a}^{-}(z)&\text{ if }a=1
 \end{cases} =-z^{2a-1}T\inv \psi_{a}^{-}(z).\tag*{\qed}
\end{gather*}\renewcommand{\qed}{}
\end{proof}

By unitarity of the translation operators, \eqref{eq:22}, this implies that
\begin{gather}
g_{ab}^{[k](\alpha)}(z) =\frac{(-1)^{k}z^{k(2a-1)}}{\ell!}\big\langle Q_{b}^{-1}v_{0},T^{-k}\psi_{a}^{-}(z) \big(\Gamma_{C}^{(\alpha)}\big)^{\ell}v_{0}\big\rangle/\tau_{k}^{(\alpha)}\notag\\
\hphantom{g_{ab}^{[k](\alpha)}(z)}{} = \frac{(-1)^{k}z^{k(2a-1)}}{\ell!}\big\langle T^{k}Q_{b}\inv v_{0},\psi_{a}^{-}(z)\big(\Gamma_{C}^{(\alpha)}\big)^{\ell}v_{0}\big\rangle/\tau_{k}^{(\alpha)}.\label{eq:34}
\end{gather}

Next we want to apply the factorization Lemma~\ref{Lem:fact}. We need to write $T^{k}Q_{b}\inv$ in standard form $Q_{1}^{\alpha}Q_{0}^{\beta}$.

\begin{lem}\label{lem:1}
 \begin{gather*}
T^{k}Q_{b}\inv=(-1)^{\frac{k(k-1)}2}(-1)^{bk}Q_{1}^{k-b}Q_{0}^{-k-1+b}.
\end{gather*}
\end{lem}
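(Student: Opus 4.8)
The plan is to reduce the statement to the two basic facts about the translation operators established earlier: the power formula of Lemma~\ref{Lem:GeneratorsTranslationGroupIdentities}(\ref{item:6}), which for $a=1,b=0$ reads $(Q_{1}Q_{0}\inv)^{k}=(-1)^{\frac{k(k-1)}{2}}Q_{1}^{k}Q_{0}^{-k}$, together with the anticommutation relation $Q_{0}Q_{1}=-Q_{1}Q_{0}$. Everything then follows by moving $Q_{b}\inv$ into standard position and tracking signs.

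First I would expand $T^{k}$. Since $T=Q_{1}Q_{0}\inv$, part~(\ref{item:6}) gives immediately
\[
T^{k}Q_{b}\inv=(-1)^{\frac{k(k-1)}{2}}Q_{1}^{k}Q_{0}^{-k}Q_{b}\inv,
\]
so it remains only to commute $Q_{b}\inv$ to the left past $Q_{0}^{-k}$. For the commutation step I would record that taking inverses in $Q_{0}Q_{1}=-Q_{1}Q_{0}$ yields $Q_{0}\inv Q_{1}\inv=-Q_{1}\inv Q_{0}\inv$, so each time $Q_{1}\inv$ is passed across a single factor $Q_{0}\inv$ a sign $(-1)$ is produced.

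Then I would treat the two values of $b$. For $b=0$ no anticommutation is needed: $Q_{0}^{-k}Q_{0}\inv=Q_{0}^{-k-1}$, which gives the claimed formula with $b=0$ (the sign $(-1)^{bk}=1$ and exponents $k-b=k$, $-k-1+b=-k-1$). For $b=1$, passing $Q_{1}\inv$ across the $k$ factors of $Q_{0}\inv$ in $Q_{0}^{-k}$ produces $Q_{0}^{-k}Q_{1}\inv=(-1)^{k}Q_{1}\inv Q_{0}^{-k}$, hence
\[
T^{k}Q_{1}\inv=(-1)^{\frac{k(k-1)}{2}}(-1)^{k}Q_{1}^{k-1}Q_{0}^{-k},
\]
which matches the claim with $b=1$ (sign $(-1)^{bk}=(-1)^{k}$ and exponents $k-b=k-1$, $-k-1+b=-k$). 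Finally I would observe that both cases are subsumed by the single stated formula, since the sign generated in moving $Q_{b}\inv$ is exactly $(-1)^{bk}$ and the resulting exponents are $k-b$ and $-k-1+b$ uniformly.

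The argument is entirely a matter of bookkeeping the Koszul-type signs coming from the anticommutation of the $Q_{a}$, so I do not expect any genuine obstacle; the only point requiring care is the sign $(-1)^{k}$ accumulated when dragging $Q_{1}\inv$ through $k$ copies of $Q_{0}\inv$, which is precisely what produces the factor $(-1)^{bk}$ in the uniform statement.
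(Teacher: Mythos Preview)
Your proof is correct and follows essentially the same route as the paper's: expand $T^{k}=(-1)^{\frac{k(k-1)}{2}}Q_{1}^{k}Q_{0}^{-k}$ via Lemma~\ref{Lem:GeneratorsTranslationGroupIdentities}(\ref{item:6}), then treat the two cases $b=0,1$ separately by commuting $Q_{b}\inv$ through. The paper's version is slightly terser (it leaves the $b=1$ anticommutation implicit), but the argument is identical.
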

\begin{proof}
By Lemma \ref{Lem:GeneratorsTranslationGroupIdentities}, Part~\ref{item:6}
\begin{gather*}
 T^{k}Q_{b}\inv =(-1)^{\frac{k(k-1)}2}Q_{1}^{k}Q_{0}^{-k}Q_{b}\inv= (-1)^{\frac{k(k-1)}2}
\begin{cases}
 Q_{1}^{k}Q_{0}^{-k-1}&\text{if }b=0,\\
(-1)^{k}Q_{1}^{k-1}Q_{0}^{-k}&\text{if }b=1.
\end{cases}\tag*{\qed}
\end{gather*}\renewcommand{\qed}{}
\end{proof}

Now we are going to express $\big(\Gamma_{C}^{(\alpha)}\big)^{\ell}$ in terms of fermion fields, see~\eqref{eq:30} and~\eqref{eq:26}. Recall that $C^{(\alpha)}=(-1)^\alpha\sum\limits_{n\in\mathbb{Z}}c_{n+\alpha}z^{-n-1}$. Associated to the series $C^{(\alpha)}$ we have a map
\begin{gather*}
c^{(\alpha)}\colon \ \mathbb{C}\big[\big[z,z\inv\big]\big]\to B=\mathbb{C}[[c_{i}]]_{i\in\mathbb{Z}}, \qquad f(z)\mapsto \Res_{z}\big(C^{(\alpha)}(z)f(z)\big).
\end{gather*}
We will need multiple copies of the map $c^{(\alpha)}$ acting on series in variables $z_{1},z_{2},\dots$. We define $c^{(\alpha)}_{i}(f(z_{i}))=c^{(\alpha)}(f(z))$, for $i=1,2,\dots$, and impose linearity in the variables $z_{j}$, $j\ne i$, i.e., the condition that (for instance) $c^{(\alpha)}_{i}(f(z_{i})z_{j}^{m})=z^{m}_{j}c^{(\alpha)}_{i}(f(z_{i}))$.

Then we can write
\begin{gather}
 \big(\Gamma^{(\alpha)}_{C}\big)^{\ell} = \prod_{i=1}^{\ell}c^{(\alpha)}_{i}\prod_{j=1}^{\ell}E_{10}(z_{j})=\prod_{i=1}^{\ell}c^{(\alpha)}_{i} \prod_{j=1}^{\ell}\psi_{1}^{+}(z_{j})\psi_{0}^{-}(z_{j})\notag\\
\hphantom{\big(\Gamma^{(\alpha)}_{C}\big)^{\ell}}{} =(-1)^{\frac{\ell(\ell-1)}2} \prod_{i=1}^{\ell}c^{(\alpha)}_{i} \prod_{s=1}^{\ell}\psi_{1}^{+}(z_{s}) \prod_{t=1}^{\ell}\psi_{0}^{-}(z_{t}).\label{eq:41}
\end{gather}
Here we have used the fact that in the expression of $E_{10}(z_{1})$ in fermions, the normal ordering is just the ordinary product of fermion fields. We also use Lemma~\ref{Lem:GeneratorsTranslationGroupIdentities}, Part~\ref{item:6}.

Using Lemma~\ref{lem:1} and \eqref{eq:41} in \eqref{eq:34}, we obtain
\begin{gather*}
 g_{ab}^{[k](\alpha)}(z)=\epsilon_{ab}^{[k](\alpha)}(z) \prod_{i=1}^{\ell}c^{(\alpha)}_{i} \left\langle Q_{1}^{k-b}Q_{0}^{-k-1+b}v_{0},\psi_{a}^{-}(z) \prod_{s=1}^{\ell}\psi_{1}^{+}(z_{s})
 \prod_{t=1}^{\ell}\psi_{0}^{-}(z_{t})v_{0}\right\rangle/\tau_{k}^{(\alpha)},
\end{gather*}
where
\begin{gather*}
\epsilon_{ab}^{[k](\alpha)}(z)=\frac{(-1)^{k}z^{k(2a-1)}}{\ell!}(-1)^{\frac{\ell(\ell-1)}2}(-1)^{\frac{k(k-1)}{2}}(-1)^{kb}.
\end{gather*}
Here, we still have $\ell=k+a-b$. Now using the factorization Lemma~\ref{Lem:fact}, we get
\begin{gather*}
g_{ab}^{[k](\alpha)}(z) =\frac{(-1)^{(a+1)\ell}\epsilon_{ab}^{[k](\alpha)}(z)}{\tau_{k}^{(\alpha)}} \prod_{i=1}^{\ell}c_{i}^{(\alpha)}
\bigg \langle Q_{1}^{k-b}v_{0},
 \left.
 \begin{cases}
 \displaystyle \prod_{j=1}^{\ell}\psi_{1}^{+}(z_{j})v_{0}\bigg\rangle,&a=0,\\
 \displaystyle \psi_{1}^{-}(z)\prod_{j=1}^{\ell}\psi_{1}^{+}(z_{j})v_{0}\bigg\rangle,&a=1
 \end{cases}\right\}\\
\hphantom{g_{ab}^{[k](\alpha)}(z) =}{}
\times\bigg\langle Q_{0}^{-k-1+b}v_{0},
\left.
 \begin{cases}
 \displaystyle \psi_{0}^{-}(z)\prod_{j=1}^{\ell}\psi_{0}^{-}(z_{j})v_{0}\bigg\rangle,&a=0,\\
\displaystyle \prod_{j=1}^{\ell}\psi_{0}^{-}(z_{j})v_{0}\bigg\rangle,&a=1
 \end{cases}\right\}.
\end{gather*}
Using Lemma \ref{lem:6} to calculate the factors involving $\psi_{0}(z)$ and $\psi_{1}(z)$ separately we find
\begin{gather*}
g^{[k](\alpha)}_{ab}(z)=\frac{(-1)^{(a+1)\ell} \epsilon_{ab}^{[k](\alpha)}(z)}{\tau_{k}^{(\alpha)}}\prod_{i=1}^{\ell}c_{i}^{(\alpha)}\left( \det\big(V^{(\ell)}_{\{z_{i}\}}\big)^{2}\prod_{j=1}^\ell(z-{z}_{j})^{1-2a}\right).
\end{gather*}
Comparing this with the expression for the $\tau$-function in Theorem~\ref{Thm:tauHankel} and the definition of the shift fields~\eqref{eq:10} gives Theorem~\ref{Thm:birkh-fact-tau}.

\subsection[Birkhoff factorization in the $3\times 3$ case, proof of Theorem \ref{Thm:birkh-3x3fact-tau}]{Birkhoff factorization in the $\boldsymbol{3\times 3}$ case, proof of Theorem \ref{Thm:birkh-3x3fact-tau}}\label{sec:birkh-fact-3tim}

The proof of Theorem~\ref{Thm:birkh-3x3fact-tau} is similar to that of Theorem~\ref{Thm:birkh-fact-tau} sketched in the previous subsection. We leave the details to the reader.

\section{Factorization and reduction to one-component fermions}\label{sec:reduct-one-comp}

Often we want to calculate a matrix element in $F^{(n)}$ of fermion fields of the form
\begin{gather*}
\big\langle Q_{n-1}^{\alpha_{n-1}}\cdots Q_{1}^{\alpha_{1}}Q_{0}^{\alpha_{0}}v_{0},P\big(\psi_{a}^{\pm}(z_{a})\big)v_{0}\big\rangle,
\end{gather*}
where $P$ is some polynomial in the fermion fields $\psi_{a}^{\pm}(z_{a})$, $a=0,1,\dots,n-1$. By linearity, we can reduce to the case where $P=M$ is a monomial, and then we can rearrange the factors in the monomial as in Definition~\ref{Def:ElemntaryWedge}: $M=M_{n-1}\cdots M_{1}M_{0}$, $M_{a}=M_{a}^{+}M_{a}^{-}$, where $M_{a}^{\pm}$ is a monomial in a~single type of fermions, ordered according to the subscript of the arguments of the fields:
\begin{gather*}
M_{a}^{\pm}=\mathop{\overleftarrow\prod}\limits_{i=1}^{t}\psi_{a}^{\pm}(z_{i})= \psi_{a}^{\pm}(z_{t})\psi_{a}^{\pm}(z_{t-1})\cdots\psi^{\pm}_{a}(z_{2})\psi_{a}^{\pm}(z_{1}). 
\end{gather*}
This defines the ordered product of fermion fields.

We calculate such matrix elements using the following factorization lemma.

Let us first introduce some notation. Let $F=F^{(1)}$ be 1-component fermionic Fock space, with vacuum $v_{0}^{(1)}$, and with bilinear form $\langle\,,\,\rangle_{F}$. The fermionic translation operators $Q$,~$Q\inv$, defined as in Section~\ref{sec:ferm-transl-oper}, act on~$F$. Let $N_{a}=N_{a}\big(\psi_{a}^{\pm}(z^{a}_{i})\big)$, $a=0,\dots, n-1$, be monomials in fermion fields $\psi_{a}^{+}(z^{a}_{i})$, $\psi_{a}^{-}(z^{a}_{j})$ of just type $a$ (acting on $F^{(n)}$), and let $\overline N_{a}$ be the corresponding monomial in 1-component fermion fields $\psi^{\pm}(z^{a}_{i})$ (acting on $F$). So for example if
\begin{gather*}
N_{a}=\psi_{a}^{+}(z^{a}_{1})\psi_{a}^{+}(z^{a}_{2})\psi^{-}_{a}(z^{a}_{3}),
\end{gather*}
then
\begin{gather*}
\overline N_{a}=\psi^{+}(z^{a}_{1})\psi^{+}(z^{a}_{2})\psi^{-}(z^{a}_{3}).
\end{gather*}

\begin{lem}\label{Lem:fact}Then
 \begin{gather*}
 \big\langle Q_{n-1}^{\alpha_{n-1}}\cdots Q_{1}^{\alpha_{1}}Q_{0}^{\alpha_{0}}v_{0}, N_{n-1}\cdots N_{1}N_{0}v_{_{0}}\big\rangle_{F^{(n)}}\\
 \qquad{} = \big\langle Q^{\alpha_{n-1}}v_{0}^{(1)},\overline N_{n-1}v_{0}^{(1)}\big\rangle_{F}\cdots \big\langle Q^{\alpha_{1}}v_{0}^{(1)},\overline N_{1}v_{0}^{(1)} \big\rangle_{F}
\big\langle Q^{\alpha_{0}}v_{0}^{(1)},\overline N_{0}v_{0}^{(1)} \big\rangle_{F}.
\end{gather*}
\end{lem}

\begin{proof} A basis for $F$ is given by elementary wedges (see Definition~\ref{Def:ElemntaryWedge}) $\omega_{\underline k}$, labeled by pairs of sequences $\underline k=(\underline k^{+},\underline k^{-})$, where each sequence $\underline k^{\pm}$ is of length $l^{\pm}$ (in general $l^{+}\ne l^{-}$) and is of the form $\underline k^{\pm}=\big(k_{1}^{\pm}<k_{2}^{\pm}<\cdots<k_{l^{\pm}}^{\pm}\le-1\big)$. Roughly speaking $\omega_{\underline k}$ is obtained from the vacuum~$v_{0}^{(1)}$ by deleting factors $z^{-k_{i}^{-}-1}$ from the vacuum{ (using the contraction operators $i\big(z^{-k_{i}^{-}-1}\big)$)} and then adding factors $z^{k_{j}^{+}}$ (using the wedging operators $\big(z^{k_{j}^{+}}\wedge\big)$). Since the order in which we apply these operations matters, we must be more precise. So define
\begin{gather*}
\omega_{\underline k}=M(\underline k)v_{0}^{(1)}=M^{+}(\underline k^{+})M^{-}(\underline k^{-})v_{0}^{(1)},
\end{gather*}
where
\begin{gather*}
M^{\pm}(\underline k^{\pm})=\fermion{}{k^{\pm}_{1}}{\pm}\fermion{}{k_{2}^{\pm}}{\pm}\cdots \fermion{}{k^{\pm}_{s}}{\pm}.
\end{gather*}
Here, we still have $ k^{\pm}_{1}<k^{\pm}_{2}<\dots<k^{\pm}_{s}\le-1$.

Similarly, elementary wedges in $F^{(n)}$ are labelled by $n$-tuples $(\underline k_{n-1},\underline k_{n-2},\dots, \underline k_{0})$, and are defined by
\begin{gather*}
\omega_{(\underline k_{n-1},\underline k_{n-2},\dots, \underline k_{0})}= M_{n-1}(\underline k_{n-1})M_{n-2}(\underline k_{n-2})\cdots M_{0}(\underline k_{0})v_{0}^{(n)}.
\end{gather*}
For the duration of the proof, we will write $v_0^{(n)}$ for the vacuum in $F^{(n)}$.

Now define a multilinear map from the $n$-fold product of $F$ with itself to $n$-component fermionic Fock space $F^{(n)}$:
\begin{gather*}
\phi \colon \ F\times F\times \cdots \times F\to F^{(n)},\qquad (\omega_{\underline k_{n-1}}, \omega_{\underline k_{n-2}},\dots, \omega_{\underline k_{0}}) \mapsto\omega_{(\underline
k_{n-1},\underline k_{n-2},\dots, \underline k_{0})}.
\end{gather*}
By the universal property of the tensor product, this induces a unique map
\begin{gather*}
\tilde \phi\colon\ F\otimes F\otimes \cdots \otimes F\to F^{(n)},\qquad \omega_{\underline k_{n-1}}\otimes \omega_{\underline k_{n-2}}\otimes\dots\otimes \omega_{ \underline k_{0}}\mapsto
\omega_{(\underline k_{n-1},\underline k_{n-2},\dots, \underline k_{0})}.
\end{gather*}
This map is an isomorphism of vector spaces, and is in fact an isometry, if we define a bilinear form on $F\otimes F\otimes\dots\otimes F$ by declaring the basis $\{ \omega_{\underline k_{n-1}}\otimes \omega_{\underline
 k_{n-2}}\otimes\cdots\otimes \omega_{ \underline k_{0}}\}$ to be orthonormal. For this bilinear form on $F^{\otimes n}$ we have (given $\omega_{a},\tilde\omega_{a}\in F$)
\begin{gather} \label{eq:87}
 \langle \omega_{n-1}\otimes\omega_{n-2}\otimes\dots\otimes\omega_{0},\tilde\omega_{n-1}\otimes\tilde\omega_{n-2}\otimes\dots\otimes\tilde\omega_{0}\rangle_{F^{\otimes n}}=\prod \langle \omega_{a},\tilde \omega_{a}\rangle_{F}.
\end{gather}
Now one checks that
\begin{gather*}
\tilde \phi\big(Q^{\alpha_{n-1}}v_{0}^{(1)}\otimes Q^{\alpha_{n-2}}v_{0}^{(1)}\otimes\cdots\otimes Q^{\alpha_{0}}v_{0}^{(1)}\big)=Q_{n-1}^{\alpha_{n-1}}Q_{n-2}^{\alpha_{n-2}}\cdots Q_{0}^{\alpha_{0}}v_{0}^{(n)},
\end{gather*}
and
\begin{gather*}
\tilde \phi\big(\overline N_{n-1}v_{0}^{(1)}\otimes \overline N_{n-2} v_{0}^{(1)}\otimes\overline N_{0}v_{0}^{(1)} \big)=N_{n-1}N_{n-2}\cdots N_{0}v_{0}^{(n)}.
\end{gather*}
The lemma then follows from \eqref{eq:87} and the fact that $\tilde \phi$ is an isometry.
\end{proof}

So the correlation functions we want to calculate reduce to products of correlation functions on one-component semi-infinite wedge space~$F$. In Appendix~\ref{sec:one-comp-ferm} we review some formulas for one component fermions.

\section{One-component fermion correlation functions} \label{sec:one-comp-ferm}
In this appendix we collect some results on one-component fermions. In other words, we are dealing with the fermionic Fock space $F=F^{(1)}$, based on $H=H^{(1)}$. The results in this Appendix should be known, for instance Lemma~\ref{LEMMA E.3} can be found (without proof) in~\cite{MR3027554}, but we could not find references with complete proofs of the facts we need.

The whole discussion of Appendix~\ref{sec:ferm-semi-infin} transfers to the present one-component context. For typographical convenience we will write $\psi^{\pm}(z)$ for $\psi_{0}^{\pm}(z)$ and similarly we write $Q^{\pm1}$ for~$Q_{0}^{\pm1}$.

The correlation functions are matrix elements in $F=F^{(1)}$ of the form $\langle Q^{k}v_{0},M(z_{i},w_{j})v_{0}\rangle$, where $M$ is some monomial in $\psi(z_{i})$ and $\psi(w_{j})$. For reasons of orthogonality of distinct charges we need to insert a power of the fermionic translation operator~$Q$.

The simplest case of a correlation function is
\begin{gather*}
\big\langle Q^{\pm1}v_{0},\psi^{\pm}(z)v_{0}\big\rangle= \left\langle \psi^{\pm}_{(-1)}v_{0},\sum_{k\in\mathbb{Z}}z^{k}\psi^{\pm}_{(-k-1)}v_{0}\right\rangle =1,
\end{gather*}
since only the $k=0$ term contributes.
\begin{lem}\label{lem:3}
 For all $k\ge1$ we have
\begin{gather*}
\left\langle Q^{\pm k}v_{0}, \mathop{\overleftarrow\prod}\limits_{i=1}^{k}\psi^{\pm}(z_{i})v_{0}\right\rangle= \det\big(V^{(k)}_{\{z_{i}\}}\big).
\end{gather*}
\end{lem}

Here the Vandermonde matrix in $\{z_{i}\}=\{z_{1},z_{2},\dots,z_{k}\}$ is given by
\begin{gather}
V^{(k)}_{\{z_{i}\}}=\begin{bmatrix}
 1&1&\dots&1\\
 z_{1}&z_{2}&\dots &z_{k}\\
 z_{1}^{2}&z_{2}^{2}&\dots &z_{k}^{2}\\
\vdots&\vdots&\ddots&\vdots\\
 z_{1}^{k-1}&z_{2}^{k-1}&\dots &z_{k}^{k-1}\\
\end{bmatrix}.\label{eq:21}
\end{gather}
Then we have
\begin{gather*}
 \det\big(V^{k}_{\{z_{i}\}}\big)=\prod_{k\ge\alpha>\beta\ge1}(z_{\alpha}-z_{\beta}).
\end{gather*}

We need
\begin{lem}\label{LEMMA E.3}
 \begin{gather*}
\big\langle Q^{m-n}v_0,\psi^{+}(w_1)\cdots \psi^{+}(w_m)\psi^{-}(y_1)\cdots \psi^{-}(y_n)v_0\big\rangle
= \frac{ \prod\limits_{1\le i<j\le m}(w_i-w_j) \prod\limits_{1\le i<j\le n}(y_i-y_j)} {\prod\limits_{i=1}^{m}\prod\limits_{j=1}^{n}(w_i-y_j)}.
 \end{gather*}
\end{lem}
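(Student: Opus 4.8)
The plan is to prove the identity by induction on $m$, the number of creation fields $\psi^+$, peeling them off one at a time from the left using the one-step recursion Lemma stated just above, and using Lemma \ref{lem:3} to evaluate the base case. Throughout I would write $F(w_1,\dots,w_m;y_1,\dots,y_n)$ for the left-hand side, so the goal is to identify $F$ with the stated rational function.

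For the base case $m=0$ the matrix element is $\langle Q^{-n}v_0,\psi^-(y_1)\cdots\psi^-(y_n)v_0\rangle$. I would match the product $\psi^-(y_1)\cdots\psi^-(y_n)$ to the ordered product $\overleftarrow\prod_{i=1}^n\psi^-(z_i)$ of Lemma \ref{lem:3} by the relabeling $z_i=y_{n+1-i}$, so that Lemma \ref{lem:3} gives $\det V^{(n)}_{\{z_i\}}=\prod_{n\ge\alpha>\beta\ge1}(z_\alpha-z_\beta)$; reindexing $a=n+1-\alpha$, $b=n+1-\beta$ turns this into $\prod_{1\le i<j\le n}(y_i-y_j)$, which is exactly the claimed formula when the $w$-products are empty. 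Checking that this order reversal produces no spurious sign is the one genuinely fiddly point of the base case, and I would verify it explicitly against the antisymmetry of both the fermion product and the Vandermonde.

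For the inductive step, set $M=\psi^+(w_2)\cdots\psi^+(w_m)\psi^-(y_1)\cdots\psi^-(y_n)$, a monomial with $s=m-1$ creation fields in the variables $w_2,\dots,w_m$ and $t=n$ annihilation fields in $y_1,\dots,y_n$, so that $k=s-t=(m-1)-n$ and the full state $\psi^+(w_1)M$ has charge $m-n=k+1$. Applying the recursion Lemma to prepend $\psi^+(w_1)$ (the exponent of $Q$ dropping by one to match the charge of $M$) gives
\[
\langle Q^{m-n}v_0,\psi^+(w_1)Mv_0\rangle
=\frac{\prod_{i=2}^{m}(w_1-w_i)}{\prod_{j=1}^{n}(w_1-\underline{y_j})}\,
\langle Q^{(m-1)-n}v_0,Mv_0\rangle ,
\]
and the reduced matrix element is $F(w_2,\dots,w_m;y_1,\dots,y_n)$. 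By the induction hypothesis this equals $\frac{\prod_{2\le i<j\le m}(w_i-w_j)\prod_{1\le i<j\le n}(y_i-y_j)}{\prod_{i=2}^m\prod_{j=1}^n(w_i-y_j)}$. Multiplying the two factors and absorbing $w_1$ into the products yields $\prod_{i=2}^m(w_1-w_i)\prod_{2\le i<j\le m}(w_i-w_j)=\prod_{1\le i<j\le m}(w_i-w_j)$ in the numerator and $\prod_{j}(w_1-y_j)\prod_{i\ge2,j}(w_i-y_j)=\prod_{i=1}^m\prod_{j=1}^n(w_i-y_j)$ in the denominator, which is precisely the asserted formula.

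Since the recursion and Vandermonde Lemmas supply all the analytic content, there is no deep obstacle; the only care needed is bookkeeping of the expansion prescription. Each application of the recursion produces its rational prefactor with the $y_j$ underlined, i.e.\ expanded in positive powers of $y_j$, and I would note that these prefactors are scalar rational functions that commute past the remaining matrix element, so that composing the steps merely multiplies the prefactors. The final value is therefore the displayed rational function read with this common expansion convention, and the symmetric closed form quoted in the statement is that value; no contraction or normal-ordering subtleties enter beyond those already packaged into the two Lemmas invoked.
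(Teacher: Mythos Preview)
Your proof is correct and takes a genuinely different route from the paper's. The paper argues directly and combinatorially: it splits into cases $m<n$ and $m\ge n$, interprets the matrix element as a sum over permutations of monomials obtained by ``pulling out'' the required fermion operators, packages the result as a determinant via the Leibniz expansion, and then evaluates that determinant by reducing (through row/column operations in Appendix~F) to a Cauchy determinant. This yields the rational function only after invoking the separately proved Lemmas~F.1 and~F.2.

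Your argument is shorter and more structural: you induct on $m$, invoking the one-step recursion Lemma (stated immediately above the statement in question) to strip $\psi^+(w_1)$ and produce the factor $\prod_{i\ge2}(w_1-w_i)/\prod_j(w_1-y_j)$, and you seed the induction at $m=0$ with the Vandermonde Lemma~\ref{lem:3}. The reindexing $z_i=y_{n+1-i}$ and the sign check are handled correctly. What your approach buys is brevity and a clean avoidance of the Cauchy-determinant machinery; what the paper's approach buys is a self-contained derivation that does not depend on the recursion Lemma, which in the paper is stated without proof (and, incidentally, with a visible typo: the right-hand matrix element should carry $Q^{k}$, not $Q^{k\pm1}$, exactly as you assumed).
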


\begin{proof} We first consider the case that $m<n$. So
 \begin{gather*}
 \big\langle Q^{m-n}v_0,\psi^{+}(w_1)\cdots \psi^{+}(w_m)\psi^{-}(y_1)\cdots \psi^{-}(y_n)v_0\big\rangle \\
 \qquad{} = \big\langle\psi_{(m-n)}^{-}\psi_{(m-n+1)}^{-}\cdots \psi_{(-1)}^{-}v_0,\psi^{+}(w_1)\cdots \psi^{+}(w_m)\psi^{-}(y_1)\cdots \psi^{-}(y_n)v_0\big\rangle,
 \end{gather*}
which is equal to the sum of the coefficients corresponding to all ways of pulling out
\begin{gather*} \psi_{(m-n)}^{-}\psi_{(m-n+1)}^{-}\cdots \psi_{(-1)}^{-}v_0,
\end{gather*} from the product of fermionic fields acting on the vacuum,
\begin{gather*}
\psi^{+}(w_1)\cdots \psi^{+}(w_m)\psi^{-}(y_1)\cdots \psi^{-}(y_n)v_0.
\end{gather*}
Given a $\sigma\in \mathfrak{S}_n$, we reorder the fermionic fields and record the sign obtained from doing so
\begin{gather*}
(-1)^{\abs{\sigma}}\psi^{+}(w_1)\cdots\psi^{+}(w_m)\psi^{-}(y_{\sigma(1)})\cdots \psi^{-}(y_{\sigma(n)})v_0.
\end{gather*}
We then pull out terms in such a way that no additional sign changes occur from permu\-ting the operators: Pull out $\psi_{(-1)}^{-}$ from $\psi^{-}(y_{\sigma(n)})$, $\psi_{(-2)}^{-}$ from $\psi^{-}(y_{\sigma(n-1)})$, $\dots$, and $\psi_{(m-n)}^{-}$ from~$\psi^{-}(y_{\sigma(m-n)})$. The product of the coefficients corresponding to these choices is
\begin{gather*} y_{\sigma(n)}^{0}y_{\sigma(n-1)}^{1}\cdots y_{\sigma(m+1)}^{n-m-1}.\end{gather*}
The remaining contributions for this choice of $\sigma$ come from pulling out coefficients of products of wedging and contracting operators from
\begin{gather*}
\psi^{+}(w_1)\cdots\psi^{+}(w_m)\psi^{-}(y_{\sigma(1)})\cdots \psi^{-}(y_{\sigma(m)})
\end{gather*}
whose actions cancel with each other. We again pull out these terms such a way that no additional sign changes occur: We count only contributions coming from terms in $\psi^{+}(w_m)\psi^{-}(y_{\sigma(1)})$ that cancel with each other, terms in $\psi^{+}(w_{m-1})\psi^{-}(y_{\sigma(2)})$ that cancel with each other, $\dots$ and terms in $\psi^{+}(w_1)\psi^{-}(y_{\sigma(m)})$ that cancel with each other.

We claim that we can count each pair, $\psi^{+}(w_{m-i})\psi^{-}(y_{\sigma(i+1)})$, $0\le i\le m-1$, as contributing $\frac{1}{w_{m-i}-y_{\sigma(i+1)}}= \sum\limits_{\ell=0}^{\infty} \frac{y_{\sigma(i+1)}^{\ell}}{w_{m-i}^{\ell+1}}$. We know we are not omitting any nontrivial terms in doing this, since any $\frac{y_{\sigma(i+1)}^{\ell}}{w_{m-i}^{\ell+1}}$ with $\ell<0$ corresponds to $\psi^{+}_{(\ell)}\psi^{-}_{(-\ell-1)}$ and $\psi^{-}_{(-\ell-1)}$ kills the vacuum or any vector obtained by acting by contracting operators on the vacuum. We must therefore only prove that we are not including any extra nontrivial terms. Towards this end, consider some monomial,
\begin{gather*}
(-1)^{\abs{\sigma}}y_{\sigma(n)}^{0}y_{\sigma(n-1)}\cdots
y_{\sigma(m+1)}^{n-m-1}\frac{y_{\sigma(1)}^{\ell_1}}{w_m^{\ell_{1}+1}}
\frac{y_{\sigma(2)}^{\ell_2}}{w_{m-1}^{\ell_{2}+1}}\cdots
\frac{y_{\sigma(m)}^{\ell_m}}{w_1^{\ell_{m}+1}},
\end{gather*}
corresponding to a product of wedging operators acting on the vacuum vector which give 0. Since all of the wedging operators, $\psi^{-}_{(\ell)}$, are such that $\ell<0$, the only way this is possible is if two of the wedging operators are the same. But this means that two of the $y_{i}$s in the above expression are being raised to the same power. Define a new element, $\gamma \in \mathfrak{S}_n$ by composing $\sigma$ with the transposition that interchanges these two $y_{i}$s. The sign of this new element is $-(-1)^{\abs{\sigma}}$. So there is a monomial in the expansion of
\begin{gather*}(-1)^{\abs{\gamma}}
\frac{y_{\gamma(n)}^{0}y_{\gamma(n-1)}\cdots y_{\gamma(m+1)}^{n-m-1}}{\prod\limits_{i=0}^{m-1}(w_{m-i}-y_{\gamma(i+1)})},\end{gather*}
which cancels with the above monomial.

Summing over all $\sigma \in \mathfrak{S}_n$, we have that
\begin{gather*}
 \big\langle Q^{m-n}v_0, \psi^{+}(w_1)\cdots \psi^{+}(w_m)\psi^{-}(y_1)\cdots \psi^{-}(y_n)v_0\big\rangle =\sum_{\sigma \in \mathfrak{S}_{n}}(-1)^{\abs{\sigma}}
 \frac{y_{\sigma(n)}^{0}y_{\sigma(n-1)}\cdots y_{\sigma(m+1)}^{n-m-1}} {\prod\limits_{i=0}^{m-1}(w_{m-i}-y_{\sigma(i+1)})}.
\end{gather*}
Using Leibniz's formula to expand this as a determinant and then computing the determinant, we find that this is exactly equal to
\begin{gather*}
 \frac{ \prod\limits_{1\le i<j\le m}(w_i-w_j) \prod\limits_{1\le i<j\le n}(y_i-y_j)} { \prod\limits_{i=1}^{m}\prod\limits_{j=1}^{n}(w_i-y_j)}.
\end{gather*}
The proof in the case that $m\ge n$ is similar. Here, we need to argue that
\begin{gather*}
\big\langle Q^{m-n}v_0, \psi^{+}(w_1)\cdots \psi^{+}(w_m) \psi^{-}(y_1)\cdots \psi^{-}(y_n)v_0\big\rangle\\
\qquad{} = \sum_{\sigma \in \mathfrak{S}_{m}} (-1)^{\abs{\sigma}}
 \frac{w_{\sigma(1)}^{m-n-1}w_{\sigma(2)}^{m-n-2}\cdots w_{\sigma(m-n)}^{0}} { \prod\limits_{i=0}^{n-1}(w_{\sigma({m-i})}-y_{i+1})}.\tag*{\qed}
\end{gather*}\renewcommand{\qed}{}
\end{proof}

In Lemma \ref{LEMMA E.3} we see that this particular matrix element of fermion fields is the expansion of a rational function in the variables appearing in the fermion fields. This is not an accident, but is a basic property of vertex algebras, see \cite{MR996026, MR1142494,MR1651389}, referred as rationality of vertex operators. Indeed, the one-component fermionic Fock space $F^{(1)}$ is an example of a vertex algebra, and the fermionic fields are vertex operators for this vertex algebra structure. Another basic property of vertex algebras is called commutativity; roughly speaking it says that if we permute the vertex operators in a matrix element of a product of vertex operators the answer is again an expansion of the same rational function, but in a different region, up to a sign. See also~\cite{MR1653021}.

For instance we will also need the following matrix elements.
\begin{lem} \label{lem:6}
\begin{gather*}\left\langle Q^{m-n-1}v_0,\psi^{-}(z) \prod_{i=1}^{m}\psi^{+}(w_i) \prod_{i=1}^{n}\psi^{-}(y_i)v_0\right\rangle\\
\qquad {} = \frac{\prod\limits_{i=1}^{m} (z-y_{i}) \prod\limits_{1\le i<j\le m}(w_i-w_j) \prod\limits_{1\le i<j\le n}(y_i-y_j)}{\prod\limits_{i=1}^{n}(z-w_i) \prod\limits_{i=1}^{m} \prod\limits_{j=1}^{n}(w_i-y_j)}.
\end{gather*}
\end{lem}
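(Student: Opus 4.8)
The plan is to obtain this evaluation by a single application of the recursive reduction Lemma stated above, peeling the lone field $\psi^{-}(z)$ off the front of the monomial and landing on the charge-balanced correlator $\langle Q^{m-n}v_0,\prod_{i=1}^m\psi^+(w_i)\prod_{j=1}^n\psi^-(y_j)v_0\rangle$, whose closed form has just been computed. Writing $M=\prod_{i=1}^m\psi^+(w_i)\prod_{j=1}^n\psi^-(y_j)$, the monomial $M$ has charge $k=m-n$, and prepending $\psi^-(z)$ lowers the charge by one, which is exactly why the matrix element is taken against $Q^{m-n-1}v_0$. Thus the whole computation reduces to determining the rational prefactor produced when $\psi^-(z)$ is pulled through $M$ to act on the vacuum.

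For that prefactor I would read off the contraction structure directly. The field $\psi^-(z)$ is of the same type as the $n$ annihilation fields $\psi^-(y_j)$, so exchanging it past them is governed by the antisymmetry of unwedging and contributes the Vandermonde-type zeros $\prod_{j=1}^n(z-y_j)$; it is of opposite type to the $m$ creation fields $\psi^+(w_i)$, with which it has the nontrivial contraction $\psi^-(z)\psi^+(w_i)\sim (z-w_i)^{-1}$, contributing the poles $\prod_{i=1}^m(z-w_i)^{-1}$, expanded in positive powers of $z^{-1}$ (the underlined convention of the recursion Lemma). Hence the recursion gives
\[
\langle Q^{m-n-1}v_0,\psi^-(z)Mv_0\rangle
=\frac{\prod_{j=1}^n(z-y_j)}{\prod_{i=1}^m(z-w_i)}\,
\langle Q^{m-n}v_0,Mv_0\rangle .
\]

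Substituting the already-established value
\[
\langle Q^{m-n}v_0,Mv_0\rangle
=\frac{\prod_{1\le i<j\le m}(w_i-w_j)\prod_{1\le i<j\le n}(y_i-y_j)}
{\prod_{i=1}^m\prod_{j=1}^n(w_i-y_j)}
\]
and multiplying the two factors yields the claimed expression, with the product over the annihilation arguments appearing in the numerator and the product over the creation arguments in the denominator. I expect the only real obstacle to be orienting the recursion correctly for a $\psi^-$ insertion: one must be careful that it is the like-type arguments $y_j$ that enter the numerator as zeros while the opposite-type arguments $w_i$ enter the denominator as poles (the reverse of the $\psi^+$ case), together with the bookkeeping of which variables are expanded in positive powers. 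As a cross-check, and as an alternative should one prefer not to invoke the recursion, the same formula can be derived ab initio by the Wick-contraction and permutation-sum argument used to prove the balanced correlator, expanding the resulting sum over $\mathfrak{S}_n$ by Leibniz's rule as a determinant.
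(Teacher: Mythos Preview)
Your argument is correct and considerably shorter than the paper's. You peel off the leftmost field $\psi^{-}(z)$ by one application of the recursion Lemma (with the roles of numerator and denominator correctly swapped for a $\psi^{-}$ insertion, as you note), landing on the previously established balanced correlator $\langle Q^{m-n}v_0,\prod_i\psi^{+}(w_i)\prod_j\psi^{-}(y_j)v_0\rangle$; one multiplication then gives the answer. The alternative you sketch at the end --- redoing the permutation-sum/Wick argument and recognizing the Leibniz expansion of a determinant --- is in fact exactly what the paper does.

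The paper's own proof does not use the recursion at all. It treats the cases $m>n$ and $m\le n$ separately and, in each, enumerates by hand which creation and annihilation modes can pair (with careful sign bookkeeping, including the $n-m+1$ choices of what $\psi^{-}(z)$ can contract with in the $m\le n$ case), packages the resulting $\mathfrak{S}_n$- or $\mathfrak{S}_m$-sum as a determinant, and then evaluates that determinant by separate identities (the paper's Lemmas G.3 and G.4), each proved by row/column operations down to a Cauchy determinant. Your route bypasses all of this structure in one step; what the paper's route buys is an explicit determinantal representation of the correlator, and independence from the recursion Lemma, whose statement in the paper is somewhat garbled (the right-hand side should carry $Q^{k}$, and the prefactor should invert for the $\psi^{-}$ insertion --- precisely the correction you make).

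Two small remarks. First, your prefactor $\prod_{j=1}^{n}(z-y_j)\big/\prod_{i=1}^{m}(z-w_i)$ is the correct one; the displayed statement in the paper has the upper limits on these two products interchanged, but the paper's own proof (via Lemmas G.3 and G.4) yields your bounds. Second, the expansion convention is consistent with your reading: $z$ is the outermost (largest) variable, so $\frac{1}{z-w_i}$ is expanded in nonnegative powers of $w_i/z$, in agreement with radial ordering of the fields from left to right.
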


One could derive this lemma from Lemma~\ref{LEMMA E.3} by commutativity of vertex operators, using the general theory of vertex algebras. For the convenience of the reader we give an elementary proof of this lemma, just using the commutation relations of fermion fields.

\begin{proof}Let $z, w_{1},w_{2},\dots,w_{m}$ be variables and define
\begin{gather*}
 W=\prod_{1\le j<k\le m}(w_{j}-w_{k}).
\end{gather*}
Consider the following rational function
\begin{gather*}
R=\frac{W}{\prod\limits_{1\le j\le m}(w_{j}-z)}.
\end{gather*}
The partial fraction expansion\footnote{Recall that if $f(z)$ is a rational function with a simple pole at $w$, then $f(z)=\frac{A}{w-z}+g(z)$, with $A$ equal to the value of $(w-z)f(z)$ at $z=w$.} of $R$ is
\begin{gather*}
R=\sum_{i=1}^{m}\frac{W_{i}}{w_{i}-z},
\end{gather*}
where
\begin{gather*}
W_{i}=\frac{W}{\prod\limits_{1\le j\le m \atop j\ne i}(w_{j}-w_{i})}=(-1)^{i+m}\prod\limits_{1\le j<k\le m \atop j,k\ne i}(w_{j}-w_{k}).
\end{gather*}
Instead of $R$ we can also consider the rational function
\begin{gather*}
 R^{\prime}=\frac{W}{\prod\limits_{1\le j\le m}(z-w_{j})}= (-1)^{m+1}\sum_{i=1}^{m}\frac{W_{i}}{z-w_{i}}.
\end{gather*}
Recall the formal series
\begin{gather*}
\delta(z,w)=\frac1{z-w}+\frac1{w-z}.
\end{gather*}
Here (and from now on) we use the convention that we expand in the second variable, so that for example $\frac1{z-w}=\sum\limits_{k=0}^{\infty}\frac{w^{k}}{z^{k+1}}$. In particular we will think of $R$ as a series in positive powers of $z$ and $R^{\prime}$ as a series in positive powers of the $w_{i}$s. Then we have the following identity:
\begin{gather*}
 R+(-1)^{m+1}R^{\prime}=\sum_{i=1}^{m}\delta(z,w_{i})W_{i},
\end{gather*}
or, writing out the definitions, multiplying by $(-1)^{m+1}$ and rearranging terms:
\begin{gather} \label{eq:1001}
\sum_{i=1}^{m}\delta(z,w_{i}) (-1)^{m+1}W_{i}+(-1)^{m} \frac{W }{\prod\limits_{j=1}^{m}(w_{j}-z)}= \frac{W}{\prod\limits_{i=1}^{m}(z-w_{i})}.
\end{gather}
Now, after this preparation, we turn to the matrix element, call it $A$, that we actually want to compute. By the fermion field commutation relation~\eqref{eq:20} and the previous Lemma~\ref{LEMMA E.3} we have for the matrix element~$A$
\begin{gather*}
A= \left\langle Q^{m-n-1}v_{0},\psi^{-}(z)\prod\limits_{i=1}^{m}\psi^{+}(w_{i}) \prod\limits_{s=1}^{n}\psi^{-}(y_{s})v_{0}\right\rangle\\
\hphantom{A}{} = \sum_{i=1}^{m}(-1)^{i+1}\delta(z,w_{i}) \left\langle Q^{m-n-1}v_{0}, \prod\limits_{j=1\atop j\ne i}^{m}\psi^{+}(w_{j}) \prod\limits_{s=1}^{n}\psi^{-}(y_{s})v_{0}\right\rangle\\
\hphantom{A=}{} + (-1)^{m}\left \langle Q^{m-n-1}v_{0}, \prod\limits_{j=1}^{m}\psi^{+}(w_{i}) \psi^{-}(z) \prod\limits_{s=1}^{n}\psi^{-}(y_{s})v_{0}\right\rangle\\
\hphantom{A}{} = \sum_{i=1}^{m}\delta(z,w_{i}) \frac{(-1)^{m+1}W_{i}} {\prod\limits_{l=1\atop l\ne i}^{m} \prod\limits_{s=1}^{n}(w_{l}-y_{s})} \cdot \prod\limits_{1\le s<t\le n}(y_{s}-y_{t})\\
\hphantom{A=}{} + (-1)^{m}\frac{W}{\prod\limits_{i=1}^{m}(w_{i}-z)}\cdot \frac{\prod\limits_{s=1}^{n}(z-y_{s}) \prod\limits_{1\le s<t\le n}(y_{s}-y_{t})} {\prod\limits_{i=1}^{m}\prod\limits_{s=1}^{n}(w_{i}-y_{s})}.
\end{gather*}
Now
\begin{gather*}
\delta(z,w_{i}) \frac{W_{i}}{\prod\limits_{l=1\atop l\ne i}^{m}\prod\limits_{s=1}^{n}(w_{l}-y_{s})}= \delta(z,w_{i}) \prod\limits_{s=1}^{n}(w_{i}-y_{s}) \frac{W_{i}} {\prod\limits_{l=1}^{m}\prod\limits_{s=1}^{n}(w_{l}-y_{s})}\\
\hphantom{\delta(z,w_{i}) \frac{W_{i}}{\prod\limits_{l=1\atop l\ne i}^{m}\prod\limits_{s=1}^{n}(w_{l}-y_{s})}}{} =
 \delta(z,w_{i}) \prod\limits_{s=1}^{n}(z-y_{s}) \frac{W_{i}} {\prod\limits_{l=1}^{m}\prod\limits_{s=1}^{n}(w_{l}-y_{s})}.
 \end{gather*}
 Hence, using \eqref{eq:1001},
 \begin{gather*}
 A =\left[\sum_{i=1}^{m}\delta(z,w_{i})(-1)^{m+1}W_{i}+(-1)^{m}\frac{W}{\prod\limits_{i=1}^{m}(w_{i}-z)} \right] \frac{\prod\limits_{s=1}^{n}(z-y_{s}) \prod\limits_{1\le s<t\le n} (y_{s}-y_{t})}
 {\prod\limits_{i=1}^{m} \prod\limits_{s=1}^{n} (w_{i}-y_{s})}\\
\hphantom{A}{}= \frac{W}{\prod\limits_{i=1}^{m}(z-w_{i})} \cdot \frac{\prod\limits_{s=1}^{n}(z-y_{s}) \prod\limits_{1\le s<t\le n} (y_{s}-y_{t})} {\prod\limits_{i=1}^{m} \prod\limits_{s=1}^{n} (w_{i}-y_{s})},
 \end{gather*} which is what we wanted to show.
\end{proof}

\subsection*{Acknowledgements}
The authors gratefully acknowledge travel support from the Simons Foundation, Collaboration Grant 245048. Addabbo expresses thanks for support from Dr.~Lois M.~Lackner Mathematics Fellowships, the University of Illinois Research Board, and the Associate Alumnae of Douglass College. The authors also thank Philippe Di Francesco and Rinat Kedem for helpful conversations, and anonymous referees for many helpful comments.

\pdfbookmark[1]{References}{ref}
\LastPageEnding

\end{document}